\newtheorem{Theorem}{Theorem}[section]
\newtheorem{Lemma}{Lemma}[section]
\newtheorem{Proposition}{Proposition}[section]
\newtheorem{Ex}{Example}[section]
\newtheorem{Remark}{Remark}[section]
\newtheorem{Fact}{Fact}[section]
\theoremstyle{remark}
\newcommand{\ti}{{\tilde\imath}}
\newcommand{\tj}{{\tilde\jmath}}
\newcommand{\LC}{\mbox{\tiny{\sf LC}}}
\newcommand{\be}{\begin{equation}}
\newcommand{\ee}{\end{equation}}
\newcommand{\R}{\mathbb{R}}
\newcommand{\Id}{\operatorname{Id}}
\newcommand{\wt}[1]{\widehat{#1}}
\def\udots{\mathinner{\mkern1mu\raise1pt\vbox{\kern7pt\hbox{.}}\mkern2mu
\raise4pt\hbox{.}\mkern2mu\raise7pt\hbox{.}\mkern1mu}}
\newcommand{\ddd}{\mathrm{d}}
\newcommand{\weg}[1]{}
\newcommand{\dd}{\operatorname{d}}
\newcommand{\tk}{{\tilde k}}
\newcommand{\const}{\mathrm{const}}
\title{Orthogonal separation of variables for spaces of constant curvature}
\author{Alexey V. Bolsinov\footnote{ School of Mathematics,
 Loughborough University,
 LE11 3TU, UK \ \ 
 \quad {\tt A.Bolsinov@lboro.ac.uk} } \quad
\& \quad  Andrey Yu. Konyaev\footnote{Faculty of Mechanics and Mathematics, Moscow State University, 119992, Moscow, Russia
 \ \ \quad {\tt  maodzund@yandex.ru}} \quad \& \quad Vladimir S. Matveev\footnote{
Institut f\"ur Mathematik, Friedrich Schiller Universit\"at Jena,
07737 Jena, Germany  \ \ \quad {\tt  vladimir.matveev@uni-jena.de}} 
}  
\date{}
\begin{document}

\maketitle

\begin{abstract} 
We construct all orthogonal separating coordinates in constant curvature spaces  of arbitrary signature.  Further, we construct explicit transformation between orthogonal  separating  and flat or generalised flat coordinates, as well as explicit formulas for the corresponding Killing tensors and St\"ackel matrices. 
\end{abstract} 

{\bf MSC classes:} 37J35, 70H20,  37J11, 37J06, 37J38,   70H15,  70S10,    37K06,   37K10,  37K25,     53B50,  53A20,  53B20,  53B30,  53B99    

{\bf Key words:} Separation of variables, finite-dimensional integrable systems, Hamilton-Jacobi equation, constant curvature spaces, Killing tensors, St\"ackel matrix 

%\begin{flushright} {\it  A nontrivial   paper on a new topic \\
% can and should imply  writing  at least   \\
%two more papers, using the same ideas. }
%
%-- Academician Asan Dabsovich Taimanov 
%
%\end{flushright} 

\tableofcontents

\section{Introduction} 

Physicists  encounter separating  variables at least in their  first year  at university. Indeed, a standard way of teaching physics  is  to suggest a mathematical model that may describe  a physical phenomenon,  find (partial, approximative)  solutions to the model and compare them 
 with known  observations or  experiments. The second step of this three-step procedure, finding  and analysing   mathematical solutions,  often uses the separation of variables techniques, which in many cases allows one  to reduce  a  multidimensional system  of ODEs or  PDEs  describing the model to a system of uncoupled  one-dimensional ODEs  or even to algebraic  or integro-algebraic equations and solve or  analyse them effectively.

By {\it (orthogonal) separation of variables} on an $n$-dimensional pseudo-Riemannian manifold $(M,g)$ we understand the existence of a Killing tensor $K_{ij}$ such that the operator field $K^i_j$ has $n$ distinct eigenvalues and  the Haantjes torsion of $K^i_j$ vanishes. These  conditions  imply  the 
existence of local coordinates in which  $K^i_j$, and hence $K_{ij}$ and $g_{ij}$,  are all diagonal.
These coordinates are called {\it separating coordinates} corresponding to $K$.

Recall that a symmetric $(0, 2)$-tensor $K$ is a {\it Killing tensor} for $g$, if $\nabla_{(k} K_{ij )}=0$, where $\nabla $ is the Levi-Civita connection of $g$. Geometrically, this condition means that the function   
$$
I:TM\to \mathbb{R}\, , \ \ I(\xi)=K(\xi,\xi)  
$$ 
is constant along the orbits of the geodesic flow of $g$  (we refer to $I$ as the {\it integral  } corresponding to $K$).

Our definition is equivalent to other definitions of  orthogonal separation of variables used in the literature. 
Indeed, the existence of such a tensor $K_{ij}$  
 is equivalent to  the  (local)   existence of $n$ Killing tensors $\overset{1}K_{ij}= g_{ij},  \overset{2}K_{ij}= K_{ij}, \overset{3}K_{ij}, ..., \overset{n}K_{ij}$  such that they are linearly independent
 and such that in the tangent space at almost every point there exists a basis 
in which all tensors $ \overset{1}K_{ij},..., \overset{n}K_{ij}$ are diagonal, see e.g. \cite[Proposition 2.2]{benenti1}.  The integrals corresponding to  these tensors  Poisson commute. 
Moreover, in the separating  coordinate system $(x^1,...,x^n)$, 
the tensors  $ \overset{1}K_{ij},..., \overset{n}K_{ij}$ have  
 the so-called St\"ackel form  by \cite{eisenhart}. In other words,   there exists  a  nondegenerate $n\times n$ matrix 
$S= (S_{ij})$  with $S_{ij}$ being a function  of the $i$-th variable $x^i$ only and such that 
 the following condition holds:  
 \begin{equation} 
 \label{eq:St_intro}
S I = P,  
\end{equation} 
where  $I$ is  an $n$-vector whose components are the integrals corresponding to $ \overset{1}K,..., \overset{n}K$, 
and   $P=  (p_1^2, p_2^2,..., p_n^2)^\top$ is the $n$-vector of the squares of  momenta.  
In this case, the Hamilton-Jacobi equation 
\begin{equation} \label{eq:HJ}
 \tfrac{1}{2} g^{ij}p_ip_j = c_1 \,   ,  \  p_i= \tfrac{\partial W}{\partial x^i}  
\end{equation} 
admits (at a generic point)  a general solution of the form 
$$
W(x, c)= \sum_{i=1}^n W_i(x^i, c) \,  ,  \   c=(c_1,...,c_n) \, ,  \ \det\left[ \tfrac{\partial^2 W}{\partial x^i\partial c_j}\right]\ne 0,
$$
with  $W_i(x^i, c)=\pm \int^{x^i} \sqrt{   \sum_{s} S_{is}(\xi) c_s} \dd \xi$.  

Having such a solution $W$, one can reduce integration of the geodesic flow  to an algebraic problem: namely, in addition to the integrals $I_1,...,I_n$, the functions  $\tfrac{\partial W}{\partial c_i}$ with $i \ne 1$ 
 are constant along any solution  and, moreover, we have    $t + \const  = \tfrac{\partial W}{\partial c_1}$.   Solving this system  with respect to $x$ gives the general solution $x(t)$.  

Note, however, that in many problems  the primitives of   
$\pm  \sqrt{   \sum_{s} S_{is} c_s} $ cannot 
  be obtained explicitly so that the relations $\tfrac{\partial W}{\partial c_i}= \const_i, $ 
	$t +\const =  \tfrac{\partial W}{\partial c_1}$ form a system of integral-algebraic equations. In many cases one can still find 
	exact solutions using special functions.
	
It is known  that introducing potential energy does not pose essential difficulties: one simply replaces the vector $P=(p_1^2,...,p_n^2)^\top$ in 
\eqref{eq:St_intro} 	by  $P + F=(p_1^2+ f_1(x_1),...,p_n^2 + f_n(x^n))^\top$ with arbitrary functions $f_i$. This gives an explicit formula $I = S^{-1} (P+F)=S^{-1}P + S^{-1}F$ for  the  commuting Hamiltonians.   In particular,  $H = I_1 = \sum_{j} (S^{-1})_{1j} (p_j^2) +  \sum_{j} (S^{-1})_{1j} f_j$, where the latter term is a function on $M$ understood as a potential energy (such functions  $U(x)= \sum_{j} (S^{-1})_{1j} f_j(x^j)$ are called {\it separable potentials}).

In our paper, we  will assume that $g$ has constant sectional curvature (and  $M$ is connected). 
In this set-up, the above definition is equivalent to  what is called {\it multiplicative separation of variables}   in literature. Namely,  
 by \cite{carter, blaszak_book},  
 the  second order differential operators 
$$
\overset{\alpha}{\mathcal{K} }:= \sum_{i,j} \nabla_i \overset{\alpha}K^{ij} \nabla_j$$
mutually commute and also commute with $\Delta_g= \overset{1}{\mathcal{K}}$. In the positive definite case and on compact manifolds (or under appropriate boundary assumptions), this implies that the Helmholtz partial differential equation $\Delta_g \phi = \lambda \phi$  and time independent Schr\"odinger equation 
$(\Delta_g + U(x))  \phi = \lambda \phi$, where  $U$ is a  separable potential, 
can be reduced to a system of $n$ uncoupled 
ordinary differential equation by a {\it multiplicative}  ansatz $\phi  =\phi_1(x^1)\phi_2(x^2)\cdots \phi_n(x^n)$.  

Note that the condition that the sectional curvature of $g$ is constant appears naturally in physics, see e.g. \cite{Moon}. Clearly, the flat metric is possibly the most important one for physical applications, and indeed, separation of variables for the flat metric was used in many physical problems, see e.g. \cite{waalkens1, waalkens}. 
Metrics of nonzero sectional curvature are often used to describe phenomena near a point source (e.g., near an atomic nucleus or a star in the Universe, see e.g.  \cite{chiscop,dullin}).

An additional motivation comes from infinite dimensional  integrable systems. It was observed
 \cite{alber, blaszak_last, falqui, veselov} that  certain finite dimensional reductions  of famous integrable PDE systems   are  equivalent  (in the sense explained e.g. in \cite{blaszak_last}) to finite dimensional integrable systems  coming from separation of variables for constant curvature metrics (of possibly indefinite signature). The relation to infinite dimensional integrable systems seems to be very deep and is far from being understood. As it will be clear from the discussion below, we came to this problem studying infinite dimensional compatible Poisson brackets \cite{NijenhuisAppl2,NijenhuisAppl3,NijenhuisAppl4}. Note also that separating coordinates are orthogonal and it is known, see e.g. \cite{zakharov}, 
that orthogonal coordinates  in flat spaces are closely related to infinite dimensional  integrable systems.   There also exists a clear  relation between weakly nonlinear (=linearly degenerate) infinite dimensional  integrable systems of hydrodinamic type and orthogonal separation of variables, see e.g. \cite{NijenhuisAppl5, Ferapontov1991, KKM}.

Because of their importance,  (orthogonal)  separating coordinates for metrics of constant curvature 
have been studied since at least the 19th century. In particular, it was  known that in all dimensions and in all signatures, the  ellipsoidal  coordinates are separating for the  pseudo-Riemannian spaces of   constant curvature. It was also known that in low dimensions $n=2,3$  all separating   coordinates can be constructed from ellipsoidal  coordinates  by  passing to the limit,  see \cite{ol0,ol} and discussion in
 \cite{Kress-book}.  Note also that the orthogonality condition for separating coordinates can be weakened.  However, as shown in \cite{benenti-1} and \cite{Kress-book},    at least  in the Riemannian signature,  this more general case  reduces naturally to a description of orthogonal separating coordinates.

  In the series of fundamental works 
\cite{Kalnins1984, Kalnins-book, Kalnins1986}  E. Kalnins et al. suggested, in all dimensions and all signatures, a list of 
 separating coordinates  for  metrics of constant sectional curvature. 
 The list is parametrized
by a combinatorial object which is a graph with some numerical  labels\footnote{One needs  to invest some work in order to relate the description of separating coordinates from \cite{Kalnins1984, Kalnins-book, Kalnins1986} to a labelled  graph. More precisely,   
it was claimed for all signatures in \cite{Kalnins1984} and shown  for metrics of Riemannian signature  in \cite{Kalnins-book, Kalnins1986}
 that every separable coordinate system of a constant curvature metric can be obtained  from  an ellipsoidal  coordinate system  by  passing to the limit, and then explained how to describe this passage using a labelled graph.}. 
Moreover, in \cite{Kalnins1984}  it was claimed that this list contains {\it all}  separating coordinates. The claim was not proved in \cite{Kalnins1984}, it was merely said that the proof is similar to that in the Riemannian case. 
The special case when the metric is Riemannian was indeed  proved in \cite{Kalnins-book, Kalnins1986},  see also \cite{Kress-book}.    To the best of our knowledge, a proof in the 
general case, when the metric has indefinite signature,  did not appear in the literature.  Although it was generally believed that the description in  \cite{Kalnins1984} is correct and complete. In  particular, this is stated in \cite[Theorem 1.3]{RM2} but without proof.     More precisely, 
\cite[Theorem 1.3]{RM2}  consists of two statements. The  first statement  claims  
 that  for every separating coordinate system  on a manifold of constant curvature, 
 there  exists    a $(1,1)$-tensor $L$ which is diagonal in this coordinate system and which is geodesically compatible to the metric (see
\S \ref{sec:1.3.1} for  necessary definitions).  This statement is proved. The existence of such a tensor allows one, in principle,  to reduce the study of separating coordinate systems on $n$-dimensional manifold of constant curvature to low dimensional cases; applying this reduction  recursively (see  e.g.   \cite{RM3,CR,CR2} for examples),  one can construct all separating coordinate systems for spaces of constant curvature. However, neither comparison   with the description of  \cite{Kalnins1984}, nor explicit formulas  for the  metrics of constant curvature in separating coordinates are given in \cite{RM, RM2, RM3}. Note that the  papers \cite{CR,CR2} containing  an  explicit description assume  $n=3$. 

Our  paper fills this gap and  proves that the list from \cite{Kalnins1984} is complete   (see Theorem \ref{thm:main}).
Our description  is visually different, but equivalent to the description in \cite{Kalnins1984} and, we believe, has 
some advantages as compared to that used in \cite{Kalnins1984}   and  further publications, e.g.  \cite{Kress-book, RM, RM2, RM3}; in particular, because it is given by explicit formulas rather than by a recursive algorithm. 

We demonstrate the advantages of our approach with the following additional new results: 
	
\begin{itemize}
\item We construct explicit formulas for  flat and generalised flat coordinates as functions of separating coordinates,  see Theorems \ref{thm:casimirs}, \ref{thm:generalised_casimirs} and  \ref{thm:F_casimirs}. Previously, only special cases were known, see e.g. \cite{blaszak2007, Kalnins1984,MB}. 
	
	\item We  prove the   essential  uniqueness of a  $(1,1)$-tensor $L$ which is diagonal in the separating coordinate system   and is geodesically compatible with the metric (see Theorem \ref{thm:L} and necessary definitions  in \S \ref{sec:1.3.1}). 
	Such $(1,1)$-tensors  play  a key role in the algebraic approach to classification of separating  coordinates for constant curvature spaces, see e.g. \cite{schoebel-1, schoebel0, schoebel1, schoebel2}, and also in the approach of \cite{RM,RM2, RM3, CR, CR2}. We use the essential uniqueness  of $L$   when discussing those cases where two sets  of parameters describe equivalent separating coordinates, see Theorem   \ref{thm:2}. 
	
	\item We construct Killing tensors  and the corresponding  St\"ackel matrix (Theorems \ref{thm:3} and  \ref{thm:st}). These tasks were initiated  in  \cite[p. 203]{Kalnins1984} (but postponed for publication elsewhere).
\end{itemize}

In our proof, we  show that the existence of separating coordinates can be reduced  to a system of PDEs which was  studied in our recent paper \cite{NijenhuisAppl3}.   The motivation of \cite{NijenhuisAppl3} is very different from that of \cite{Kalnins1984, Kalnins1986}. The  goal of 
 \cite{NijenhuisAppl3}  was to  describe  all compatible pencils of $\infty$-dimensional geometric Poisson structures  
of the form  $\mathcal{P}_3+ \mathcal{P}_1$  on the loop space, where $ \mathcal{P}_i$ has order $i$ and $\mathcal{P}_3$ is Darboux-Poisson. We achieved this goal under natural nondegeneracy assumptions and  
obtained a full list of such structures; this list is also parametrized by a  graph with labels.
 Then, we observed that our graph with labels and the description  
 obtained in   \cite{Kalnins1984}, although visually different,   are actually combinatorially equivalent. This suggested that one could  use the calculation  and arguments  of   \cite{NijenhuisAppl3} to  prove the claim of \cite{Kalnins1984}, and this is how we proceed in the current paper. First we explain how to reduce the system of PDEs describing the existence of separating coordinates to those equations which were studied and completely solved in \cite{NijenhuisAppl3}. Then we use this solution 
 to prove the claim of  \cite{Kalnins1984}.  Of course,  a direct
 proof, without using  \cite{NijenhuisAppl3}  but repeating all the steps from \cite{NijenhuisAppl3}, is also possible and we explain how it goes. 

 Let us emphasize  that  certain steps  of  the proof in the 
 Riemannian case   from \cite{Kalnins-book,Kalnins1986}, in combination with \cite{eisenhart} (or Appendix of  \cite{Kalnins-book}),  resemble  those from \cite{NijenhuisAppl3}.  This makes us believe  that  E. Kalnins, W. Miller and G. Reid, the authors of  \cite{Kalnins1984}, had the proof of their claim,  but did not publish it   due to   its length and complexity.  In fact,  in 
the case of indefinite  signature new phenomena appear and the proof becomes  more complicated 
  compared  to the Riemannian case. Also note  that  the proof from \cite{NijenhuisAppl3} is quite long and complex.

Recall  that by \cite{hammerl,kruglikov}, for a real-analytic metric and therefore for the metrics  of constant curvature, any Killing tensor is also real-analytic. Therefore, a local existence of separating coordinates   implies their 
 existence near almost every point; moreover, 
separating coordinates can be chosen to be real-analytic.

Let us also note that we allow some coordinates to be complex-valued; the corresponding momenta are also  complex-valued. We assume, of course, that along with a complex coordinate $z^j= x^j +  i x^{j+1}$,  
its  complex-conjugate  $\bar z^j= x^j -  i x^{j+1}$ is also a coordinate. Having formulas for the metric,  Killing tensor, or separating ansatz  in coordinates involving complex conjugate pairs $z^j, \bar z^j$,
it is straightforward to rewrite them in real-valued  coordinates (see e.g. \cite{DR}).

\subsection{Description of separating coordinates for metrics of constant curvature} \label{sec:2} 

We start with an explicit construction of  a  family of diagonal 
metrics in some coordinates.  Our main result, Theorem \ref{thm:main}, states that these coordinates are separating, the metrics have constant curvature, and  any pair (metric of constant curvature, separating coordinate system)
  is contained in the family, modulo renumeration of coordinates and coordinate changes of  
the form  $x_{\mathrm{\mathrm{new}}}^i= x_{\mathrm{\mathrm{new}}}^i(x_{\mathrm{\mathrm{old}}}^i)$.

Each metric from the family  is built based  on the following data:

\begin{enumerate} \item Natural number $B\le n:= \dim M$  \ (=``number of blocks''). 
\item Natural numbers $n_1,...,n_B$ (=``dimensions of blocks'') with $\sum_{\alpha=1}^Bn_\alpha= n$.
\item In-directed rooted forest $\mathsf F$ (with $B$ vertices which we denote  by numbers $1,...,B$), that is,  
an oriented graph such that each of its connected 
component is a rooted tree and for any vertex there exists a necessarily unique 
 oriented path towards a  root.
The edge of the graph connecting vertices $\beta$
  and $\alpha$ and oriented towards $\alpha$ will be denoted by $\vec{\beta \alpha}$. 
 An example of a rooted tree  with $B=4$ vertices is on 
Figure  \ref{Fig:1}.  

\begin{figure}
\begin{center}
 \includegraphics[width=\textwidth]{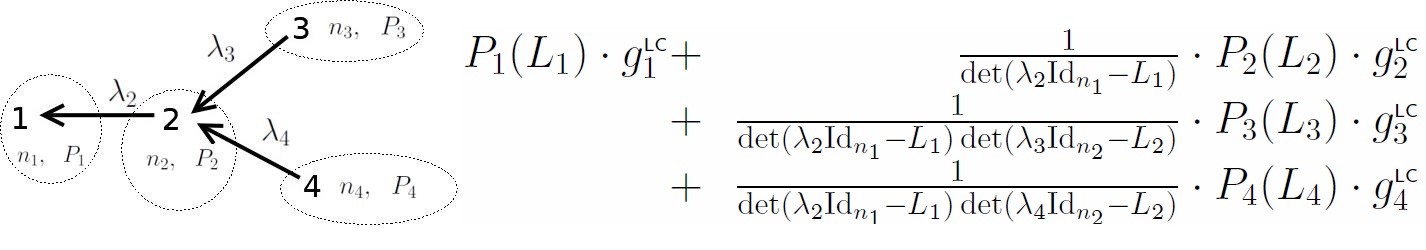} 
\end{center}
\caption{An example of an  in-tree, structure of  its labels and the form of the corresponding metric \eqref{eq:hatg}.
 \label{Fig:1}}
\end{figure}

\item Each edge $\vec{\beta \alpha}$   of $\mathsf F$  is labelled by a  number $\lambda_\beta$.  

\item Polynomials $P_\alpha,$ $\alpha  = 1,...,B$ of degree  at most $n_\alpha+1$, whose coefficients are denoted as follows:
$$P_\alpha(t) =   \overset{\alpha}{a}_0 + \overset{\alpha}{a}_1  t +...+  \overset{\alpha}{a}_{n_\alpha+1}t^{n_\alpha+1}.$$
\end{enumerate}

The structure of an in-directed forest   defines a natural strict partial\weg{\footnote{ It is convenient, though not necessary, to assume that the vertices of $\mathsf F$ are labelled in such a way that $\alpha \prec \beta$ implies $\alpha<\beta$}} order (denoted by $\prec$) on the set $\{1,...,B\}$: 
for two numbers $\alpha\ne \beta \in \{1,...,B\}$ we set $\alpha \prec \beta$, if there exists an oriented path from $\beta$ to $\alpha$.
For instance,  for the graph shown on Figure \ref{Fig:1}, we have $1\prec2 \prec  3$, $1 \prec 2\prec 4$. 
We say $\alpha= \operatorname{next}(\beta)$, if  the graph contains the edge $\vec{\beta \alpha}$.   
In Figure  \ref{Fig:1},  the root is $1$,   the leaves are $3$ and $4$ and we have:  $1= \textrm{next}(2)$, $2= \textrm{next}(3)$, $2= \textrm{next}(4)$.

Further, we assume that the polynomials $P_\alpha$ and  numbers $\lambda_\beta$     satisfy the following restrictions:
\begin{itemize}
\item[(i)] If $\mathsf F$  has  more than one connected component and therefore more that one root,
 then for any root $\alpha$ we have  $\overset{\alpha}{a}_{n_\alpha+1}=0$,  i.e., $\deg P_\alpha\le n_\alpha$.

\item[(ii)] If $\alpha= \operatorname{next}(\beta)$, 
then $\lambda_\beta$ is a root of $P_\alpha$ and $\overset{\beta}{a}_{n_\beta+1}= P_\alpha'(\lambda_\beta)$, where $P'(t)$ denotes the derivative of $P(t)$.

\item[(iii)] If  for at least two vertices $\beta\ne \gamma$ we have 
$ \operatorname{next}(\beta)=  \operatorname{next}(\gamma)=: \alpha$ with $\lambda_\beta = \lambda_\gamma=:\lambda$,   then $\lambda$ is a double root of $P_\alpha$. Note that in view of (ii) this automatically implies 
$\overset{\beta}{a}_{n_\alpha+1} = \overset{\gamma}{a}_{n_\gamma+1} =0$.
\end{itemize}

Based on these data, we construct the following  diagonal metric  $g$. 
We first divide our diagonal  coordinates into $B$ blocks of  dimensions $n_1,...,n_B$ with   $n_1+...+n_B=n$:
	\begin{equation}
	\label{eq:decomposition}
	(\underbrace{x_1^1,...,x_1^{n_1}}_{X_1},...,\underbrace{x_B^{1},...,x_B^{n_B}}_{X_B}).
	\end{equation}

Next, for every $\alpha= 1,...,B$ we consider the  $n_\alpha$-dimensional  contravariant 
metric $g^{\LC}_\alpha$ and $n_\alpha$-dimensional operator  (= $(1,1)$-tensor) 
$L_\alpha$ given by:
\begin{equation}
\label{eq:tildeg}
	g^{\LC}_\alpha= \sum_{s=1}^{n_\alpha} \left(\prod_{j\ne s} (x_\alpha^s- x_\alpha^j)\right)^{-1}  \left(\tfrac{\partial}{\partial x_\alpha^s}\right)^2\  \ , \ \  L_\alpha= \textrm{diag}(x_\alpha^1,...,x_\alpha^{n_\alpha}).
\end{equation}

Finally, we introduce a diagonal  metric 
\begin{equation}
\label{eq:hatg}
  g = \operatorname{diag}( g_1, \dots,  g_B)\quad \mbox{with }  g_\alpha = 
f_\alpha \cdot P_\alpha(L_\alpha) \cdot g^{\LC}_\alpha,
\end{equation}
and  $f_\alpha$ given by  
\begin{equation} 
\label{eq:falpha}
f_\alpha = \prod_{\begin{array}{c}s\stackrel{\prec}{=} \alpha \\
 \textrm{$s$ not a root} \end{array}} \frac{1}{\det(\lambda_{s}\cdot\Id_{n_s} -   L_{\operatorname{next}(s)})}, 
\end{equation} 
where $s\stackrel{\prec}{=} \alpha$ means that $s\prec \alpha$ or $s=\alpha$ and $\Id_{n_s}$ denotes the identity operator in dimension $n_s$. If $\alpha$ is a root, we set $f_\alpha=1$.  We also use $P_\alpha(L_\alpha)$ for the polynomial $P_\alpha$ applied to the operator $L_\alpha$. Since $L_\alpha=\operatorname{diag}(x_\alpha^1,..., x_\alpha^{n_\alpha})$, 
we simply have $P_\alpha(L_\alpha)=\operatorname{diag}\bigl(P_\alpha(x_\alpha^1),..., P_\alpha(x_\alpha^{n_\alpha})\bigr)$.  Similarly, 
$P_\alpha(L_\alpha) \cdot g^{\LC}_\alpha$ is the product of the diagonal matrices $P_\alpha(L_\alpha)$ and $ g^{\LC}_\alpha$. 
See  Figure \ref{Fig:1} for an example. See also \cite[first formula for $g$ in Example 4.1]{NijenhuisAppl3} for an example with flat $g$ and 
  $B=n_1=n_2=2$.

This completes the description of  the family  of (contravariant) metrics and we can state our main result.

\begin{Theorem} 
\label{thm:main}
The metrics \eqref{eq:hatg}   have constant sectional curvature and  $x^1,...,x^n$ are separating coordinates for them.  Moreover,  
 every  pair (metric of constant curvature, separating coordinates for it) 
 can be brought to this form  by renumeration  of coordinates and coordinate transformations  of the form $ x_{\mathrm{new}}^i= x_{\mathrm{new}}^i(x_{\mathrm{old}}^i)$, $i=1,\dots,n$.
\end{Theorem}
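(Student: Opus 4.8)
The plan is to establish both assertions of Theorem \ref{thm:main} — that the metrics \eqref{eq:hatg} have constant sectional curvature with separating coordinates $x^1,\dots,x^n$, and that this family exhausts all such pairs up to renumeration and coordinate changes $x^i_{\mathrm{new}} = x^i_{\mathrm{new}}(x^i_{\mathrm{old}})$ — by reducing both directions to the classification of solutions of an overdetermined PDE system, namely the system governing compatible pencils $\mathcal P_3 + \mathcal P_1$ that was completely solved in \cite{NijenhuisAppl3}. The conceptual core is the bridge between the two problems. On the separation side: a diagonal metric admits, in the given coordinates, a diagonal Killing tensor with $n$ distinct eigenvalues and vanishing Haantjes torsion precisely when its coefficients satisfy the classical St\"ackel/Levi-Civita separability relations, and imposing in addition that the sectional curvature be constant produces a further system of second-order PDEs on the metric coefficients. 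On the geometric side: by the (already established) first statement of \cite[Theorem 1.3]{RM2}, any separating coordinate system on a constant-curvature space carries a $(1,1)$-tensor $L$, diagonal in these coordinates and geodesically compatible with $g$ in the sense of \S\ref{sec:1.3.1}, and geodesic compatibility is itself a first-order linear overdetermined system relating $L$ and $g$. I would then show that, after an appropriate change of unknowns, the pair $(g,L)$ satisfies exactly the PDE system of \cite{NijenhuisAppl3}, with the flat structure underlying $\mathcal P_3$ and the first-order part $\mathcal P_1$ recovered from $L$.

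Granting this identification, the direct claim becomes a verification that \eqref{eq:hatg} is one of the solutions produced by the classification. One block-decomposes $g$ according to the in-forest $\mathsf F$ into the Benenti-type pieces $g^{\LC}_\alpha$, $L_\alpha$ of \eqref{eq:tildeg}, dressed by the polynomials $P_\alpha$ and the weights $f_\alpha$ of \eqref{eq:falpha}. For a single block ($B=1$) this is the classical Eisenhart--St\"ackel fact that a metric with $g^{ss} = P(x^s)\prod_{j\ne s}(x^s - x^j)^{-1}$, $\deg P \le n+1$, has constant sectional curvature, governed by the top coefficient of $P$ and equal to zero exactly when $\deg P \le n$, and that its eigenvalue coordinates are separating. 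Conditions (i)--(iii) are then exactly the requirements that, along each edge $\vec{\beta\alpha}$ of $\mathsf F$, the gluing mediated by the factor $1/\det(\lambda_\beta\Id - L_{\operatorname{next}(\beta)})$ matches the curvatures of adjacent blocks and kills the mixed components of the curvature tensor, so that globally $R_{ijkl} = c\,(g_{ik}g_{jl} - g_{il}g_{jk})$ for a single constant $c$. That $x^1,\dots,x^n$ are separating follows from membership in the classified family, and can in any case be seen directly by exhibiting the St\"ackel matrix as in Theorem \ref{thm:st}.

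For completeness I would run the argument backwards. Real-analyticity of Killing tensors on constant-curvature spaces (\cite{hammerl,kruglikov}) lets me work near a generic point. Given an arbitrary pair (constant-curvature metric, separating coordinates), produce $L$, pass through the bridge to the PDE system, and invoke the completeness half of the classification in \cite{NijenhuisAppl3}: its solutions are parametrized precisely by an in-forest with numerical edge-labels and polynomial data, and unwinding this parametrization reproduces the data (1)--(5). The restrictions (i)--(iii) are not extra hypotheses but are forced — (i) reflecting that constant curvature in two independent root directions is possible only in the flat case, and (ii)--(iii) being the edge-matching conditions from the gluing analysis now read as necessary. Finally, the non-uniqueness must be pinned down: permutations of blocks and of coordinates within a block (a symmetry of $g^{\LC}_\alpha$), the reparametrizations $x^i \mapsto x^i_{\mathrm{new}}(x^i)$ used to normalize the separability structure into the polynomial form, and the residual redundancy in $(\mathsf F, \lambda, P)$ catalogued separately in Theorem \ref{thm:2}; controlling these uses the essential uniqueness of such an $L$ (Theorem \ref{thm:L}).

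The step I expect to be the genuine obstacle is the bridge itself — showing that the PDE system describing separating coordinates on a constant-curvature space coincides, after the change of unknowns, with the pencil system of \cite{NijenhuisAppl3} — together with the phenomena specific to indefinite signature that it brings along: non-semisimple $L$, Jordan blocks, and null or complex-conjugate coordinate pairs, for which the nondegeneracy hypotheses and normal forms in the two settings do not match verbatim. This is precisely the place where, as noted in the introduction, the indefinite case is strictly harder than the Riemannian one treated in \cite{Kalnins-book, Kalnins1986}.
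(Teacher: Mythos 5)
Your overall strategy --- reduce the classification to the PDE system completely solved in \cite{NijenhuisAppl3} --- is the one the paper actually follows, but the two steps you explicitly defer are precisely the substance of the proof, and one of them cannot be carried out the way you set it up. Concerning the ``bridge'': the paper does not route through the tensor $L$ of \cite[Theorem 1.3]{RM2} at all. It writes $g=\sum_i\varepsilon_i e^{g_i}(\dd x^i)^2$, extracts from the constant-curvature condition $R^k_{\ jik}=0$ and from the Levi-Civita separability equations the system \eqref{eq:2}, \eqref{eq:3}, \eqref{eq:4}, and must then still produce the additional equation \eqref{eq:5}, in which a \emph{single} function $u_i(x^i)$ serves for all $j$. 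That last point is genuinely delicate: the identity $u_{ij}=u_{ik}$ is obtained only when the dependence forest $\mathsf F$ (defined by which $g_j$ depend on which $x^i$) is a chain, and the general flat case is then handled by restricting to the totally geodesic coordinate subspaces corresponding to chains and reassembling the blocks. None of this appears in your proposal, and ``after an appropriate change of unknowns the pair $(g,L)$ satisfies the pencil system'' is an assertion where the argument should be.

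More seriously, the classification you invoke from \cite{NijenhuisAppl3} is a classification of \emph{flat} diagonal metrics satisfying those equations, yet you apply it uniformly to all constant-curvature metrics. The paper closes this gap with the cone construction: for $K\ne 0$ it passes to the flat metric $\wt g=(\dd x^0)^2+(x^0)^2 g$ of \eqref{eq:hatG} on $\mathbb{R}_{>0}\times M$, verifies that the lifted Killing tensor $\operatorname{diag}(0,K_{11},\dots,K_{nn})$ makes $(x^0,x^1,\dots,x^n)$ separating for $\wt g$, applies the already-proved flat case there, and observes that the root of the resulting tree is forced to be the one-dimensional $x^0$-block, so that deleting it returns the claimed form for $g$. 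Without this (or a substitute) your argument proves only the flat case. The forward direction of your proposal is essentially sound: constant curvature of \eqref{eq:hatg} is obtained in the paper from the explicit (generalised) flat coordinates of Theorems \ref{thm:casimirs}--\ref{thm:F_casimirs}, and separability from the explicit Killing tensors of Theorem \ref{thm:3} (equivalently, the St\"ackel matrix of Theorem \ref{thm:st}), which matches what you sketch.
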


The curvature of the metric $g$ given by \eqref{eq:hatg}  is $-\tfrac{1}{4} \overset{\alpha}a_{n_\alpha+1}$, where $\alpha$ is a root of the forest $\mathsf F$. Note that if $\mathsf F$ has several connected components, then the curvature is  zero by {\rm (i)}. In this case,   $g$ is the direct product of the metrics corresponding to the connected components.

\subsection{What  parameters describe equivalent separating coordinates?}\label{sec:1.2}

Let us now discuss    when two different sets of parameters  (in-directed rooted forest $\mathsf F$, numbers $B$, $n_\alpha$, $\lambda_\beta$,  and polynomials 
 $P_\alpha$ satisfying conditions {\rm (i)--(iii)} from \S \ref{sec:2})
describe  
the same  separating coordinates  (of a fixed metric of constant curvature) up to the following freedom: re-numeration of  coordinates and  coordinate  changes of the form 
$x^i_{\mathrm{new}}= x^{i}_{\mathrm{new}}(x^i_{\mathrm{old}})$.  \weg{That is, when there exists a isometry of a metric which sends one system of separating coordinates to another system, modulo re-numeration  and  coordinate  changes  
$x^i_{\mathrm{new}}= x^{i}_{\mathrm{new}}(x^i_{\mathrm{old}})$. }

Clearly, if one   set of data can be obtained from another  by the operations described below, then the corresponding separating coordinates, in the above sense, coincide:

\begin{itemize} \item[(A)]  For some $\alpha \in \{1,...,B\}$, 
 we re-numerate (via a bijection from $\{1,...,n_\alpha\}$ to itself) the coordinates in the coordinate block $X_\alpha=(x_\alpha^1,...,x_\alpha^{n_\alpha})$.

 \item[(B)]  For some $\alpha \in \{1,...,B\}$, 
in the coordinate block $X_\alpha=(x_\alpha^1,...,x_\alpha^{n_\alpha})$,   we change the variables  by the formula $
{\overset{\mathrm{new}}x}{}_\alpha^i= A{\overset{\mathrm{old}}x}{}_\alpha^i + C$ with constants $A\ne 0$ and $C$, and ``compensate'' this as follows:
\begin{itemize}

\item Change  the polynomial $P_\alpha(t)$ by the polynomial  $A^{n_\alpha+1}P_\alpha\left(\tfrac{t- C}{A}\right)$. 

\item  
  For each $\beta$ such that $\operatorname{next}(\beta)= \alpha$, change the number $\lambda_\beta$ by $A \lambda_\beta+ C$. 

\item    For each $\gamma$ such that the oriented path from $\gamma$  to the  root passes through $\alpha$, multiply the polynomial 
$P_\gamma$ by $A^{n_\alpha}$.

\end{itemize}

\item[(C)]  Rename  the vertices in the in-directed forest  (via a bijection from $\{1,...,B\}$ to itself).

	\item[(D)] If $\alpha$ is a leaf (= no incoming edges) and $n_\alpha=1$,    replace 
	the polynomial $P_\alpha$ by any other polynomial $\tilde P_\alpha$ satisfying conditions {\rm (i)--(iii)} such  that the sign of   $P_\alpha(x_{old})$  coincides with that of  
	$\tilde P_\alpha(x_{new})$.  
\end{itemize}

It is easy to check that the operations do not destroy 
 conditions {\rm (i)--(iii)}
of  \S \ref{sec:2}.    The next theorem says that these operations exhaust all possibilities.

\begin{Theorem} \label{thm:2} 
Suppose two sets of admissible data $($in-directed rooted forest $\mathsf F$, numbers $B$, $n_\alpha$, $\lambda_\beta$, and polynomials $P_\alpha$ satisfying conditions {\rm (i)--(iii)} from {\rm\S \ref{sec:2}}$)$
describe  the same  separating coordinates up to re-numeration of coordinates and transformations of the form $x^i_{\mathrm{new}}= x^{i}_{\mathrm{new}}(x^i_{\mathrm{old}})$.
   Then one of the sets of admissible data  can be transformed into  the 
other one using a finite number of operations {\rm (A), (B), (C), (D)} described above.
\end{Theorem}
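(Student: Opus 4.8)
\textbf{Proof proposal for Theorem \ref{thm:2}.}

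The plan is to extract, from the separating coordinate system together with the ambient metric of constant curvature, a set of intrinsic geometric invariants which determine the admissible data up to the moves (A)--(D), and then to check that two admissible data giving the same separating coordinates produce the same invariants. The key tool is the essentially unique geodesically-compatible diagonal $(1,1)$-tensor $L$ from Theorem \ref{thm:L}: fixing a separating coordinate system (up to renumeration and $x^i_{\mathrm{new}}=x^i_{\mathrm{new}}(x^i_{\mathrm{old}})$) on a fixed constant-curvature metric $g$ determines $L$ up to the transformations allowed by its essential uniqueness, namely affine reparametrisations $L\mapsto AL+C\,\Id$ performed block-wise, plus the choice of numbering. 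Thus from the geometry alone we recover: the partition of coordinates into blocks $X_1,\dots,X_B$ (as the maximal groups of coordinates on which the eigenvalue functions of $L$ interact in the prescribed way), the dimensions $n_\alpha$, and — from how the blocks are coupled through $L$ and through the metric coefficients $f_\alpha$ — the in-directed forest $\mathsf F$ and the labels $\lambda_\beta$. The operations (A) and (C) are precisely renumeration of coordinates inside a block and of blocks, so these account for the corresponding freedom; operation (B) is exactly the affine freedom $x^i_\alpha\mapsto Ax^i_\alpha+C$ in the essential uniqueness of $L$, together with the bookkeeping forced on $P_\alpha$, on the $\lambda_\beta$ with $\mathrm{next}(\beta)=\alpha$, and on the downstream polynomials $P_\gamma$ so that formula \eqref{eq:hatg} for $g$ is preserved.

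First I would make precise how the block decomposition, the forest, and the numbers $\lambda_\beta$ are read off from $(g,L)$. Within a single block $\alpha$, the metric is $g_\alpha=f_\alpha\cdot P_\alpha(L_\alpha)\cdot g^{\LC}_\alpha$ with $L_\alpha=\mathrm{diag}(x_\alpha^1,\dots,x_\alpha^{n_\alpha})$; since $g^{\LC}_\alpha$ is the canonical ``Vandermonde'' metric attached to the eigenvalues of $L_\alpha$, the product $f_\alpha\cdot P_\alpha(L_\alpha)$ is recovered as the diagonal ratio $g_\alpha/g^{\LC}_\alpha$, i.e.\ as a single scalar function on the $s$-th coordinate axis, namely $f_\alpha\,P_\alpha(x_\alpha^s)$. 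The scalar $f_\alpha$ is a constant times a product of factors $\det(\lambda_s\Id_{n_s}-L_{\mathrm{next}(s)})$ coming from ancestors of $\alpha$; comparing across sibling and parent blocks pins down each $\lambda_\beta$ (it is the value at which the ancestor factor of a child degenerates) and therefore the edge set of $\mathsf F$, while the location of the genuine one-variable polynomial dependence identifies $P_\alpha$ up to the scalar ambiguities already listed. Here one invokes conditions (i)--(iii): they guarantee, via Theorem \ref{thm:main}, that the data are admissible, so that $\lambda_\beta$ really is a root of $P_{\mathrm{next}(\beta)}$ and that double roots occur exactly when two children share a label, which is what lets the combinatorial data be reconstructed unambiguously.

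Next I would handle the ambiguities that the geometry does \emph{not} see, which is where operation (D) enters. If $\alpha$ is a leaf with $n_\alpha=1$, then $g^{\LC}_\alpha=(\partial/\partial x_\alpha^1)^2$, the block metric is just $f_\alpha\,P_\alpha(x_\alpha^1)\,(\partial/\partial x_\alpha^1)^2$, and $L_\alpha=(x_\alpha^1)$ is a scalar; any admissible allowed coordinate change $x^1_\alpha\mapsto h(x^1_\alpha)$ absorbs all of $P_\alpha$ except its sign, because the $1$-dimensional ``Vandermonde'' metric carries no nontrivial Vandermonde factor and conditions (i)--(iii) impose no constraint linking a leaf's $P_\alpha$ to anything downstream beyond the requirement that $\lambda_\alpha$ — if $\alpha$ is not a root — equals the prescribed root of $P_{\mathrm{next}(\alpha)}$, which fixes nothing about $P_\alpha$ itself. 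Hence in this case the only invariant surviving is the sign of $P_\alpha$ along the relevant range, and (D) is exactly the statement that $P_\alpha$ may be replaced by any admissible $\tilde P_\alpha$ of the same sign. I would also note the borderline case where $\alpha$ is a \emph{root} leaf with $n_\alpha=1$: then by (i) (if $\mathsf F$ is disconnected) or by the curvature normalisation, the top coefficient is constrained, but the remaining coefficients are still free up to the affine move, again captured by (B) and (D).

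Finally I would assemble the argument: given two admissible data $\mathcal D,\mathcal D'$ producing the same separating coordinates on the same $g$, use (C) and (A) on $\mathcal D'$ to match the numbering of blocks and of coordinates within blocks to that of $\mathcal D$ (legitimate since Theorem \ref{thm:L} gives $L$ up to numbering and block-wise affine change); then use (B) block-by-block, starting from the roots and moving towards the leaves along $\mathsf F$ — the partial order $\prec$ ensures this can be done consistently, since the compensating changes to $P_\gamma$ for $\gamma$ downstream of $\alpha$ only propagate away from the root — to normalise each $L_\alpha$, hence each $x_\alpha^s$, so that the two tensors $L$ coincide exactly; at that point the block decomposition, $\mathsf F$, the $\lambda_\beta$, and each $f_\alpha$ agree, so the metric equality forces $f_\alpha P_\alpha=f'_\alpha P'_\alpha$, i.e.\ $P_\alpha=P'_\alpha$, on every block of dimension $\ge 2$ and on every non-leaf block of dimension $1$ (where (ii)--(iii) rigidify $P_\alpha$ through its relation to children); on the remaining blocks — leaves with $n_\alpha=1$ — apply (D) to reconcile $P_\alpha$ with $P'_\alpha$, which is possible precisely because they must have matched signs to define the same metric. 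The main obstacle I anticipate is the bookkeeping in the block-by-block affine normalisation: one must verify that the four clauses of operation (B) are jointly consistent — in particular that the multiplicative corrections $A^{n_\alpha}$ to all downstream $P_\gamma$ do not clash when $\gamma$ lies downstream of several blocks being normalised, and that these corrections, together with the change $A^{n_\alpha+1}P_\alpha((t-C)/A)$ and the shift of the $\lambda_\beta$'s, exactly cancel in formula \eqref{eq:falpha}--\eqref{eq:hatg}; doing the normalisation in the order dictated by $\prec$ (roots first) is what makes this manageable, and I would spend the bulk of the written proof checking this cancellation carefully and treating the degenerate small-block cases where conditions (i)--(iii) interact with (D).
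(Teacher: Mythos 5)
Your proposal is correct and follows essentially the same route as the paper: both arguments rest on the essential uniqueness of the geodesically compatible diagonal tensor $L$ (Theorem \ref{thm:L}), read off the root block from its nonconstant eigenvalues and the labels $\lambda_\gamma$ from its constant eigenvalues so that the affine freedom in $L$ is exactly operation (B), and then descend recursively along $\mathsf F$ (the paper phrases this as an induction on dimension via the warped-product decomposition, you as a root-to-leaf normalisation), with the $n=1$ leaf blocks — where Theorem \ref{thm:L} fails — absorbed by operation (D). The bookkeeping you flag for the downstream $A^{n_\alpha}$ corrections in (B) is real but is exactly the consistency check the paper's inductive set-up handles by restricting to the fibre metrics one root at a time.
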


\begin{Remark}
\label{rem:1.1}{\rm 
One can use operations (B) and (D)  to reduce the number of parameters in the labelling of  $\mathsf F$. 
 }\end{Remark}

\subsection{Killing tensors in separating coordinates and the corresponding  St\"ackel matrix} \label{sec:killing}

The existence of  Killing tensors for metric \eqref{eq:hatg} that are diagonal in the coordinates $(x^1,...,x^n)$ will  be clear from the proof. \weg{ In \S \ref{sec:2.1} we recall the 
 equations on the diagonal entries of $g$  which are equivalent to the fact that $(x^1,...,x^n)$ are separating for $g$. Formula   \eqref{eq:hatg}  was derived from these equations (using the additional assumption that $g$ has constant 
curvature and relation  \cite[(57)]{NijenhuisAppl3})  and the entries of \eqref{eq:hatg} indeed satisfy them.} 
The goal of this section is to give an explicit formula for  these Killing tensors.

\weg{They will be constructed block-wise, with
  block number $\alpha$ giving   an $n_\alpha$-dimensional space of Killing tensors  such that 
the corresponding $(1,1)$-tensors $\overset{\alpha}K{}^i_j$ have generically $n_\alpha$ different eigenvalues inside the block  number $\alpha$ and  
  are proportional to the identity (with  nonconstant coefficients)   in   other blocks. }

As a by-product (Theorem \ref{thm:st})   we obtain  an explicit formula   for the St\"ackel matrix. This  result is important from the point of view of applications: indeed, as recalled  in  Introduction,   the solution $W(x,c)$ of the  Hamilton-Jacobi equation  corresponding to separating coordinates involves   the St\"ackel  matrix.  Note also that in \cite[p. 203]{Kalnins1984} Kalnins et al. explicitly asked  to construct the St\"ackel matrices corresponding to separating variables on spaces of constant curvature.

Following \cite{benenti,hyperbolic} (see also Facts \ref{fact:1}, \ref{fact:2}  from \S \ref{sec:1.3.1}), we recall the construction of Killing tensors for the Levi-Civita metric \eqref{eq:tildeg}.  Consider 
the following   family $\overset{\alpha}S_{\LC}(t_\alpha)$ 
 of $(1,1)$ tensors on the $\alpha$-block with coordinates $X_\alpha=(x_\alpha^1,...,x_\alpha^{n_\alpha})$:
\begin{equation}
\label{eq:S} 
\overset{\alpha} S_{\LC}(t_\alpha) = \left(t_\alpha \Id_{n_\alpha}-L_\alpha  \right)^{-1} \det\left( t_\alpha \Id_{n_\alpha}- L_\alpha\right). 
\end{equation}
Note that  $\overset{\alpha}S_{\LC}(t_\alpha)$
 is polynomial of degree $n_\alpha-1$ in $t_\alpha$. 
 It is known, see e.g. \cite{benenti}, that for every 
$t_\alpha$  the $(0,2)$-tensor $\overset{\alpha} K{}^{\LC}_{ij}$ given, for any two tangent vectors $\xi, \eta$,  by 
\begin{equation} 
\label{eq:K} 
\overset{\alpha}K{}^{\LC}(\xi  ,\eta )=   {g}^{\LC}_\alpha  ( \overset{\alpha} S_{\LC}(t_\alpha) \xi, \eta )
  \end{equation}  
 is a Killing tensor for ${g}^{\LC}_\alpha$. Clearly, the number of linearly independent Killing tensors within the family \eqref{eq:K} is $n_\alpha$ and generically they have $n_\alpha$ different eigenvalues.

Next, with the help of 
   the  family of  Killing tensors $\overset{\alpha}K{}^{\LC}$ of the $n_\alpha$ dimensional metric $g_\alpha^{\LC} $,
	  let us construct Killing tensors for the metric $g=\operatorname{diag}(g_1,\dots, g_B)$ given by \eqref{eq:hatg}. We first  define the (1,1)-tensor $\overset{\alpha}S(t_\alpha)$  to be
 block-diagonal in the coordinates $(x^1,...,x^n)=(X_1,...,X_B)$ with the following blocks:  
 \begin{itemize}
  \item the $\alpha$-block  ($\alpha \in \{1,\dots, B\}$) is  $\overset{\alpha}S_{\LC}/f_\alpha$, where  $\overset{\alpha}S_{\LC}$ is   given by \eqref{eq:S} and $f_\alpha$ by \eqref{eq:falpha}; 
  \item   every $\beta$-block with $\alpha\prec \beta$ is  given by 
$$
 \overset{\alpha\beta}\Phi(t_\alpha)\Id_{n_\beta},
$$
   where    $\overset{\alpha\beta}\Phi $ is  the  polynomial in  $t_\alpha$ of degree $n_\alpha-1$
  given by 
	\begin{equation} \label{eq:f} 
\overset{\alpha\beta}\Phi(t_\alpha):= \tfrac{1}{t_\alpha-\lambda_\gamma } \tfrac{1}{f_\alpha}\bigl(\det(t_\alpha \Id_{n_\alpha}-L_\alpha  )- \det(\lambda_\gamma \Id_{n_\alpha}-L_\alpha )\bigr),  
	\end{equation} 
	where $\gamma$ is such that $\gamma\prec \beta$ or $\gamma=\beta$ and  $\operatorname{next}(\gamma)= \alpha$  (these conditions  determine $\gamma$ uniquely\footnote{In other words,  $\gamma$  is the vertex on the unique oriented path 
		from $\beta$ to $\alpha$ in $\mathsf F$ that precedes $\alpha$.};
  
  \item for every other $\mu\in  \{1,\dots, B\}$,    the corresponding $\mu$-block  is zero. 
\end{itemize}

\begin{Theorem}\label{thm:3}  
For every $\alpha$  and $t_\alpha$ the $(0,2)$-tensor $\overset{\alpha}K(t_\alpha)$ given, 
for any two tangent vectors $\xi, \eta$,   by 
\begin{equation} \overset{\alpha}K (t_\alpha)(\xi  ,\eta )= g ( \overset{\alpha} S(t_\alpha) \xi, \eta )
 \label{eq:K1} \end{equation}  
is a  Killing tensor for $g$. Moreover, 
for any $\alpha, \beta$ and any $t_\alpha$, $t_\beta$ the integrals corresponding to the Killing tensors  $\overset{\alpha}K(t_\alpha)$ and $\overset{\beta}K(t_\beta)$  Poisson-commute.  
\end{Theorem}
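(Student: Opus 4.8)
The plan is to verify the two assertions — that each $\overset{\alpha}K(t_\alpha)$ is a Killing tensor for $g$ and that the associated integrals Poisson-commute — by reducing to the already-established Benenti-type facts on each Levi-Civita block and then controlling the interaction between blocks through the partial order $\prec$. First I would recall (Facts \ref{fact:1}, \ref{fact:2}) that $\overset{\alpha}S_{\LC}(t_\alpha)$ defined by \eqref{eq:S} yields, via \eqref{eq:K}, a Killing tensor for $g^{\LC}_\alpha$, so the $\alpha$-block of $\overset{\alpha}K(t_\alpha)$ is, up to the constant-in-$X_\alpha$ factor $1/f_\alpha$, a genuine Killing object for $g^{\LC}_\alpha$; one then has to check that multiplying $g^{\LC}_\alpha$ by the conformal-looking factor $f_\alpha\cdot P_\alpha(L_\alpha)$ to obtain $g_\alpha$ does not spoil the Killing equation. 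The key point here is that $L_\alpha$ and $\overset{\alpha}S_{\LC}(t_\alpha)$ commute (both are diagonal in $X_\alpha$) and that $P_\alpha(L_\alpha)$, $f_\alpha$ and the $\beta$-block factors $\overset{\alpha\beta}\Phi(t_\alpha)$ depend only on the coordinates through $L_\bullet$; writing the Killing equation $\nabla_{(k}K_{ij)}=0$ in the diagonal coordinates $(x^1,\dots,x^n)$ it becomes a system of first-order ODE-type relations among the diagonal entries of $g$ and of $\overset{\alpha}K(t_\alpha)$, and one checks these hold using the explicit forms \eqref{eq:tildeg}, \eqref{eq:hatg}, \eqref{eq:falpha}, \eqref{eq:f}.

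More concretely, I would organize the computation by the block structure. For indices $i,j,k$ all lying in the $\alpha$-block the Killing condition is the classical Benenti identity for $g^{\LC}_\alpha$, perturbed by the factor $f_\alpha P_\alpha(L_\alpha)$; since $f_\alpha$ is constant along $X_\alpha$ and $P_\alpha(x_\alpha^s)$ depends only on $x_\alpha^s$, the relevant derivatives $\partial_{x_\alpha^s}$ of the metric coefficients pick up the extra terms from $P_\alpha$, and the same terms reappear in the $\alpha$-block entries of $\overset{\alpha}K$ because that block is $\overset{\alpha}S_{\LC}(t_\alpha)\,g_\alpha/f_\alpha$ — so the perturbation cancels. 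For indices spread across two blocks $\alpha\prec\beta$, the only nontrivial component of the Killing equation involves a derivative of the $\beta$-block of $g$ with respect to an $X_\alpha$-coordinate; here the factor $f_\beta$ contains $\prod_{s\,\stackrel{\prec}{=}\,\beta}1/\det(\lambda_s\Id-L_{\operatorname{next}(s)})$, whose $s=\gamma$ factor (with $\operatorname{next}(\gamma)=\alpha$) is $1/\det(\lambda_\gamma\Id_{n_\alpha}-L_\alpha)$ and therefore does depend on $X_\alpha$; differentiating it produces exactly the expression \eqref{eq:f} defining $\overset{\alpha\beta}\Phi(t_\alpha)$, which is why that block of $\overset{\alpha}S(t_\alpha)$ was chosen to be $\overset{\alpha\beta}\Phi(t_\alpha)\Id_{n_\beta}$. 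For indices in blocks $\mu$ with $\mu\not\stackrel{\prec}{=}\alpha$ the corresponding block of $\overset{\alpha}S(t_\alpha)$ vanishes and $g_\mu$ is independent of $X_\alpha$ (by \eqref{eq:falpha}, $f_\mu$ does not involve $L_\alpha$), so the Killing equation is trivially satisfied. Assembling these cases proves that $\overset{\alpha}K(t_\alpha)$ is a Killing tensor.

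For the Poisson-commutativity of the integrals $\overset{\alpha}I(t_\alpha)(\xi)=\overset{\alpha}K(t_\alpha)(\xi,\xi)$ and $\overset{\beta}I(t_\beta)$, I would use the standard criterion that two integrals quadratic in momenta commute iff the Nijenhuis-type bracket of the corresponding $(1,1)$-tensors (or equivalently, the vanishing of a certain Schouten-type expression) holds; since $g$ is block-diagonal and each $\overset{\alpha}S(t_\alpha)$ is block-diagonal with the $\alpha$-block a function of $L_\alpha$ only (via \eqref{eq:S}) and the other nonzero blocks scalar in each $L_\bullet$, all the operators $\{\overset{\alpha}S(t_\alpha)\}_{\alpha,t_\alpha}$ mutually commute pointwise and each commutes with every $L_\bullet$. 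Within a single block the commutativity is the known Benenti statement (Facts \ref{fact:1}, \ref{fact:2}); between different blocks one checks that the "mixed" terms in the Poisson bracket vanish because the only cross-block dependence is through the scalar factors $f$, $\overset{\alpha\beta}\Phi$, which are functions of the $L_\bullet$ and hence are themselves "in involution" with the Stäckel structure. The main obstacle I anticipate is precisely this cross-block bookkeeping: making sure that when one differentiates the off-diagonal factor \eqref{eq:f} along a coordinate of a third block $\delta$ sitting between $\alpha$ and $\beta$ (so $\alpha\prec\delta\prec\beta$), the resulting terms still cancel — this requires carefully tracking how $f_\beta$ factorizes over the oriented path from $\beta$ to its root and matching each factor's derivative against the corresponding $\overset{\alpha\beta}\Phi$ or $\overset{\delta\beta}\Phi$ term. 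I would handle this by an induction on the length of the path from $\beta$ down to $\alpha$ in $\mathsf F$, the base case being adjacent blocks $\operatorname{next}(\beta')=\alpha$ and the inductive step peeling off one vertex at a time.
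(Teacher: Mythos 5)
Your strategy --- direct verification of the Killing equation in the separating coordinates, organized block by block --- is genuinely different from the paper's proof and would, if completed, work; but the paper takes a structural shortcut that avoids every one of the computations you list as obstacles. For each vertex $\alpha$ the paper invokes the warped-product decomposition of \S\ref{sec:1.3.2.}, in which the fibre metric is the metric of the form \eqref{eq:hatg} built on the subtree spanned by $\alpha$ and all $\beta$ with $\alpha\prec\beta$. Fact \ref{fact:3} provides a $(1,1)$-tensor geodesically compatible with this fibre metric, Fact \ref{fact:1} converts it into the polynomial family of Killing tensors of the fibre, and the general principle that a Killing tensor of the fibre of a warped product lifts to a Killing tensor of the total metric (justified in the paper via the result of \cite{milson} that Killing tensors on constant-curvature spaces are polynomials in Killing vectors, which visibly lift) produces Killing tensors of $g$; the family \eqref{eq:K1} is then identified inside this lifted family. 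All of your ``cross-block bookkeeping'' is thereby absorbed into one structural statement, and this is what the paper's route buys.

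Two further comments on your route. First, as written it is a plan rather than a proof: the decisive identities --- that differentiating the factor $\det(\lambda_\gamma\Id_{n_\alpha}-L_\alpha)^{-1}$ occurring in $f_\beta$ along $X_\alpha$ reproduces exactly the polynomial \eqref{eq:f}, and that the contributions cancel along a chain $\alpha\prec\delta\prec\beta$ --- are asserted, not checked, and you yourself flag them as the main obstacle; these are precisely where the proof lives. Moreover, the case you dismiss as ``trivially satisfied'' is not quite trivial: for an eigenvalue index $i$ in the $\alpha$-block and a derivative index $k$ in a block $\mu\prec\alpha$ (where the $\mu$-block of $\overset{\alpha}S(t_\alpha)$ vanishes), the Eisenhart condition $\partial_k\rho_i=(\rho_k-\rho_i)\,\partial_k\log g^{ii}$ still has to be verified, and it holds only because $\rho_i$ and $g^{ii}$ carry the factors $1/f_\alpha$ and $f_\alpha$ respectively. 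Second, your treatment of Poisson commutativity is heavier than necessary: once both tensors are known to be Killing and simultaneously diagonal in the common orthogonal coordinate system, substituting the Eisenhart conditions for each into the Poisson bracket of the two quadratic integrals makes it vanish identically; no Nijenhuis-type criterion and no additional cross-block analysis are required.
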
 

Clearly,  the space spanned by each individual family  $\overset{\alpha}K(t_\alpha)$ is $n_\alpha$-dimensional, and the spaces spanned by different families  $\overset{\alpha_i}K (t_{\alpha_i})$ are mutually independent, so that  they  span a vector space of 
 dimension $n_1+...+n_B=n$.

Let us now describe the integrals from Theorem \ref{thm:3} by a St\"ackel matrix, that is,  find an $n\times n$ matrix $S$ such that 
the  integrals corresponding to the Killing tensors are given   by the  ``St\"ackel''  formula  
\begin{equation} \label{eq:St}
S I = P, 
\end{equation} 
where
$I=(I_1,..., I_n)^\top$ is  the $n$-vector whose components are integrals corresponding to the Killing tensors
and $P=  (p_1^2,..., p_n^2)^\top$ is the vector of squares of  momenta.    
Recall  that
the  $(i,j)$-component $S_{ij}$  is a function of  $x^i$  only and  this property implies Poisson commutativity of the components of  $I$.

As specific Killing tensors related to the integrals $I_1,\dots, I_n$, we take the coefficients of the polynomials $\overset{\alpha}K(t_\alpha)$.  Namely,  
the first Killing tensor is the coefficient of $\overset{1}K(t_1)$  at $t_1^{n_1-1}$, the second Killing tensor is the coefficient of $\overset{1}K(t_1)$  at $t_1^{n_1-2}$, \ldots, the $n_1$-th Killing tensor is the free term  of $\overset{1}K(t_1)$, the $(n_1+1)$-st Killing tensor is the coefficient of $\overset{2}K(t_2)$  at $t_2^{n_2-2}$ and so on.

We will see  that the part of the matrix $S$ which is relevant for the integrals coming from the family $\overset{\alpha}K(t_\alpha)$ corresponds to the coordinate block $X_\alpha$, that is,  contains the rows from $n_1+...+n_{\alpha-1}
+1$ to $n_1+...+n_{\alpha}.$ This part of the matrix is naturally divided into  $B$ 
blocks of dimensions $n_1\times n_\alpha$, $n_2\times n_\alpha$,..., $n_B\times n_\alpha$.  The following theorem describes them.

\begin{Theorem} \label{thm:st} The block number $\alpha$ is a square $n_\alpha\times n_\alpha$ matrix 
 whose $(i,j)$ element is given by $\frac{1}{ P_\alpha(x_\alpha^i)}
\left(x_\alpha^{i}\right)^{n_\alpha -j}$. 
The block number $\beta$ with $\beta\ne \alpha$ is given as follows: 
\begin{itemize}
\item it is zero if $\beta\ne \operatorname{next}( \alpha)$; 
\item for
 $\beta = \operatorname{next}( \alpha)$ the $i$-th  component of the 
first column of the block is 
$\frac{1}{P_\beta(x_\beta^i)(x_\beta-\lambda_\beta )}$ and  all the other columns are zero.
\end{itemize}
\end{Theorem}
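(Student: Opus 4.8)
The strategy is to verify the St\"ackel relation \eqref{eq:St} directly from the explicit description of the Killing tensors $\overset{\alpha}K(t_\alpha)$ in Theorem \ref{thm:3}, reducing everything to a block-by-block computation. Recall that $S_{ij}$ must be a function of $x^i$ only, and \eqref{eq:St} asserts that for each row index $i$, lying in some coordinate block $X_\alpha$, the momentum square $p_i^2$ equals $\sum_j S_{ij} I_j$. So the proof naturally splits according to which block the row index $i$ belongs to. Fix such a block $\alpha$ and a coordinate $x_\alpha^i$ in it. The claim to be checked is: the combination of the $I_j$'s with the coefficients prescribed by Theorem \ref{thm:st} (namely $\tfrac{1}{P_\alpha(x_\alpha^i)}(x_\alpha^i)^{n_\alpha-j}$ against the integrals from the $\overset{\alpha}K$-family, plus the single nonzero entry $\tfrac{1}{P_\beta(x_\beta^i)(x_\beta-\lambda_\beta)}$ — here I read $x_\beta$ as $x_\alpha^i$ evaluated in the role appearing in \eqref{eq:f} — against the appropriate integral from the $\overset{\beta}{K}$-family with $\beta=\operatorname{next}(\alpha)$, and zero elsewhere) reconstructs $p_i^2$.

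The first step is to package the integrals from a single family as a generating polynomial: set $\overset{\alpha}I(t_\alpha):=\overset{\alpha}K(t_\alpha)(\xi,\xi)=g(\overset{\alpha}S(t_\alpha)\xi,\xi)$, a polynomial in $t_\alpha$ of degree $n_\alpha-1$ whose coefficients are precisely $I_{n_1+\dots+n_{\alpha-1}+1},\dots,I_{n_1+\dots+n_\alpha}$. Using the block structure of $\overset{\alpha}S(t_\alpha)$ recorded just before Theorem \ref{thm:3} and the definition \eqref{eq:hatg} of $g$, one computes $\overset{\alpha}I(t_\alpha)$ as a sum of three types of contributions: (a) the contribution of the $\alpha$-block itself, which is $\tfrac{1}{f_\alpha}\,g_\alpha^{\LC}(\overset{\alpha}S_{\LC}(t_\alpha)\xi_\alpha,\xi_\alpha)$ with the factor $f_\alpha$ from $\overset{\alpha}S_{\LC}/f_\alpha$ cancelling the $f_\alpha$ in $g_\alpha$; (b) the contributions of the blocks $\beta$ with $\alpha\prec\beta$, each equal to $\overset{\alpha\beta}\Phi(t_\alpha)\cdot g_\beta(\xi_\beta,\xi_\beta)$ using $\overset{\alpha}S(t_\alpha)|_\beta=\overset{\alpha\beta}\Phi(t_\alpha)\Id_{n_\beta}$ and the fact that $\operatorname{tr}$ of $\Id$ against $g_\beta$ is just $g_\beta(\xi_\beta,\xi_\beta)=\sum_k g_{\beta,kk}(\xi_\beta^k)^2$; (c) zero from all remaining blocks. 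The classical Benenti identity for the Levi-Civita metric $g_\alpha^{\LC}$ — the one underlying \eqref{eq:S}, \eqref{eq:K}, see \cite{benenti} — says that $g_\alpha^{\LC}(\overset{\alpha}S_{\LC}(t_\alpha)\xi_\alpha,\xi_\alpha)=\sum_{s=1}^{n_\alpha}\tfrac{\prod_{j\ne s}(t_\alpha-x_\alpha^j)}{\prod_{j\ne s}(x_\alpha^s-x_\alpha^j)}\,(p_s^{(\alpha)})^2\cdot\bigl(\text{factor from }g^{\LC}\bigr)$; more precisely, after using \eqref{eq:tildeg}, this term equals $\sum_{s}\bigl(\prod_{j\ne s}(t_\alpha-x_\alpha^s)\bigr)\cdot\bigl(\prod_{j\ne s}(x_\alpha^s-x_\alpha^j)\bigr)^{-1}\cdot(\partial_{x_\alpha^s})^2$ paired with $\xi$, i.e. a Lagrange-interpolation sum. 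This is the key algebraic mechanism, and I expect it to be the heart of the argument.

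The second step is inversion. Evaluate the generating polynomial $\overset{\alpha}I(t_\alpha)$ at $t_\alpha=x_\alpha^i$. In the Lagrange sum in (a) all terms with $s\ne i$ vanish (because the factor $\prod_{j\ne s}(x_\alpha^i-x_\alpha^j)$ contains $(x_\alpha^i-x_\alpha^i)=0$ once we substitute, wait — rather: the numerator $\prod_{j\ne s}(x_\alpha^i-x_\alpha^j)$ vanishes since $i\ne s$ means $i$ is among the $j$'s), leaving exactly the $s=i$ term, which equals $\tfrac{1}{f_\alpha}P_\alpha(x_\alpha^i)\,g^{\LC}_{\alpha,ii}(\partial_{x_\alpha^i}\xi)^2$; but this must be matched with the diagonal entry of $g_\alpha=f_\alpha P_\alpha(L_\alpha)g^{\LC}_\alpha$, and since $p_i$ is the momentum conjugate to $x_\alpha^i$, one gets precisely $P_\alpha(x_\alpha^i)\cdot p_i^2$ up to the contributions (b). Thus $\overset{\alpha}I(x_\alpha^i)=P_\alpha(x_\alpha^i)\,p_i^2+(\text{terms from blocks }\beta\succ\alpha)$, which rearranges to $p_i^2=\tfrac{1}{P_\alpha(x_\alpha^i)}\bigl(\overset{\alpha}I(x_\alpha^i)-\sum_{\beta\succ\alpha}\overset{\alpha\beta}\Phi(x_\alpha^i)\,g_\beta(\xi_\beta,\xi_\beta)\bigr)$. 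Now two things must be reconciled with Theorem \ref{thm:st}: first, $\overset{\alpha}I(x_\alpha^i)=\sum_j (x_\alpha^i)^{n_\alpha-j}I_{(\alpha),j}$, which gives exactly the stated $\alpha$-block of $S$, entry $\tfrac{1}{P_\alpha(x_\alpha^i)}(x_\alpha^i)^{n_\alpha-j}$; second, the correction terms $\sum_{\beta\succ\alpha}\overset{\alpha\beta}\Phi(x_\alpha^i)g_\beta(\xi_\beta,\xi_\beta)$ have to reassemble — using the definition \eqref{eq:f} of $\overset{\alpha\beta}\Phi$ and the recursive structure of $f_\alpha$ from \eqref{eq:falpha} — into a \emph{single} term involving only $p_j^2$ for $j$ in the block $\operatorname{next}^{-1}$... this is where the telescoping happens.

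\textbf{The main obstacle.} The delicate part is showing that the off-diagonal correction terms collapse to the single nonzero entry described in Theorem \ref{thm:st} — namely that instead of a full lower-triangular structure, the St\"ackel matrix has the nonzero entry $\tfrac{1}{P_\beta(x_\beta^i)(x_\beta-\lambda_\beta)}$ only in the block $\beta=\operatorname{next}(\alpha)$, and zero in all blocks $\beta$ with $\beta\succ\alpha$, $\beta\ne\operatorname{next}(\alpha)$. The mechanism must be: one writes $p_i^2$ (for $i$ in block $\alpha$) in terms of $\overset{\alpha}I$-coefficients \emph{and} lower momenta $p_k^2$ with $k$ in blocks $\succ\alpha$, then substitutes the already-established expressions for those $p_k^2$ (inductively, working from the leaves of $\mathsf F$ down to the roots, or the reverse), and the identity \eqref{eq:f} together with property (ii) of \S\ref{sec:2} ($\lambda_\beta$ a root of $P_{\operatorname{next}(\beta)}$, and $\overset{\beta}a_{n_\beta+1}=P'_{\operatorname{next}(\beta)}(\lambda_\beta)$) makes all intermediate terms cancel, leaving only the contribution routed through the edge $\vec{\alpha\,\operatorname{next}(\alpha)}$. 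Concretely: the polynomial $\det(t_\alpha\Id-L_\alpha)-\det(\lambda_\gamma\Id-L_\alpha)$ in the numerator of \eqref{eq:f} vanishes at $t_\alpha=\lambda_\gamma$, which is what ensures $\overset{\alpha\beta}\Phi$ is a genuine polynomial; and evaluating the chain of such relations along the path from $\beta$ to $\alpha$ forces the collapse. Verifying this telescoping rigorously — in particular keeping careful track of the $f_\alpha$-normalizations and the ranges of indices — is the step I would budget the most time for. The functional-dependence property ($S_{ij}$ depends on $x^i$ only) is then immediate from the formulas: each entry produced is manifestly a function of the single coordinate labelling its row. Poisson commutativity is already guaranteed by Theorem \ref{thm:3}, or alternatively follows from the St\"ackel form itself.
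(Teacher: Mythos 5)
Your overall strategy is sound and is essentially the transpose of the exercise the paper leaves to the reader: instead of inverting the matrix $S$ of Theorem \ref{thm:st} and matching the rows of $S^{-1}$ against the Killing tensors of Theorem \ref{thm:3}, you verify $SI=P$ row by row. The setup — the generating polynomial $\overset{\alpha}{I}(t_\alpha)$, its three types of contributions, and the evaluation at $t_\alpha=x_\alpha^i$ that kills every Lagrange term except $s=i$ — is correct and already produces the diagonal block of $S$. The gap is that you stop at the one step that actually needs an argument (the collapse of the contributions of \emph{all} descendants to a single term per \emph{child} of $\alpha$), and the mechanism you propose for it — a leaf-to-root induction whose cancellations are supposedly driven by property (ii) of \S\ref{sec:2} — misattributes what makes it work. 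Property (ii) plays no role here: $SI=P$ is a purely algebraic identity in the definitions \eqref{eq:hatg}, \eqref{eq:S}, \eqref{eq:f}, \eqref{eq:falpha}, valid whether or not $\lambda_\delta$ is a root of $P_\alpha$. (Your induction can in fact be salvaged — the telescoping identity it needs is $2H_\gamma=f_\gamma I_{(\gamma),1}-\sum_{\delta'}f_{\delta'}I_{(\delta'),1}$ over the children $\delta'$ of $\gamma$ — but no induction is necessary.)

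The collapse is immediate. Writing $2H_\gamma:=\sum_{k\in X_\gamma}g^{kk}p_k^2$, your expansion reads
$$
\overset{\alpha}{I}(t_\alpha)=\sum_{s=1}^{n_\alpha}P_\alpha(x_\alpha^s)\,\frac{\prod_{j\ne s}(t_\alpha-x_\alpha^j)}{\prod_{j\ne s}(x_\alpha^s-x_\alpha^j)}\;p_{\alpha,s}^2\;+\sum_{\gamma:\ \alpha\prec\gamma}\overset{\alpha\gamma}{\Phi}(t_\alpha)\,2H_\gamma .
$$
Two observations finish the proof. (1) At $t_\alpha=x_\alpha^i$ one has $\det(x_\alpha^i\Id-L_\alpha)=0$, so \eqref{eq:f} gives $\overset{\alpha\gamma}{\Phi}(x_\alpha^i)=-\tfrac{1}{x_\alpha^i-\lambda_\delta}\cdot\tfrac{1}{f_\alpha}\det(\lambda_\delta\Id-L_\alpha)=-\tfrac{1}{(x_\alpha^i-\lambda_\delta)\,f_\delta}$, where $\delta$ is the child of $\alpha$ lying below $\gamma$; this value depends on $\gamma$ only through $\delta$, so the sum over all descendants regroups as a sum over the children $\delta$ of $\alpha$ of $\sum_{\gamma\succeq\delta}2H_\gamma$ (the inner sum running over $\gamma=\delta$ and all $\gamma$ with $\delta\prec\gamma$). (2) Extracting the coefficient of $t_\delta^{\,n_\delta-1}$ from the same expansion applied to $\overset{\delta}{I}(t_\delta)$ — the leading coefficient of each Lagrange term is $g^{ss}/f_\delta$ and that of each $\overset{\delta\gamma}{\Phi}$ is $1/f_\delta$ — gives exactly $I_{(\delta),1}=\tfrac{1}{f_\delta}\sum_{\gamma\succeq\delta}2H_\gamma$. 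Hence
$$
P_\alpha(x_\alpha^i)\,p_i^2=\overset{\alpha}{I}(x_\alpha^i)+\sum_{\delta:\ \operatorname{next}(\delta)=\alpha}\frac{1}{x_\alpha^i-\lambda_\delta}\,I_{(\delta),1},
$$
which is precisely the claimed row of $S$; in particular it confirms that the label in the theorem's off-diagonal entry must be read as the label of the incoming edge (i.e. $\lambda_\alpha$, not $\lambda_\beta$, in the theorem's indexing with $\beta=\operatorname{next}(\alpha)$), consistently with the displayed $3+2$ example. Every entry so produced depends only on the coordinate labelling its row, so $S$ is a genuine St\"ackel matrix.
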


For example, for ${\mathsf F} = \left(1 \overset{\lambda_2} \longleftarrow 2\right)$  with two  blocks  of dimensions $n_1=3$ and $n_2=2$, the matrix $S$ is the product of two matrices: 
$$   
 S= \left(\begin{array}{ccccc}
P_1(x^{1}) & 0 & 0 &0 & 0 
\\
0 & P_1(x^{2}) & 0 & 0 & 0 
\\
0 & 0 & P_{1}(x^3) & 0 & 0 
\\
 0 & 0 & 0 & P_2(x^{4}) & 0 
\\
 0 & 0 & 0 & 0 & P_2(x^5)
\end{array}\right)^{-1}\left(\begin{array}{ccccc}
(x^{1})^{2} & x^{1} & 1 & \frac{1}{x^{1}-\lambda_2} & 0 
\\
 (x^{2})^{2} & x^{2} & 1 & \frac{1}{x^{2}-\lambda_2} & 0 
\\
 (x^{3})^{2} & x^{3} & 1 & \frac{1}{x^{3}-\lambda_2} & 0 
\\
 0 & 0 & 0 & x^{4} & 1 
\\
 0 & 0 & 0 & x^{5} & 1 
\end{array}\right)
.$$

The proof of Theorem \ref{thm:st} is an exercise in linear algebra and  is  left to the reader: one needs to invert the matrix $S$ from Theorem \ref{thm:st} and check that  
the  rows give coefficients of the Killing tensors from Theorem \ref{thm:3}.

\subsection{Transformation between  separating and  (generalised)  flat  coordinates} \label{sec:flat}

We consider a metric $g$ given by  \eqref{eq:hatg} in  coordinates $x^1,...,x^n$ and assume first 
that it is flat. 
Following terminology in \cite{flat}, we  say that local functions $Y^1(x^1,...,x^n),...,Y^n(x^1,...,x^n )$ form a  {\it flat coordinate system} for $g$,
 if they are linearly independent and for every $i$  we have $\nabla \nabla Y^i =0$, 
where $\nabla $ is the Levi-Civita connection of $g$.  

In the case of a metric $g$ of constant nonzero curvature $K$,
by {\it generalised flat coordinates}  we understand $n+1$ linearly independent local  functions $Y^1,...,Y^{n+1}$ such that for every $i\in \{1,..., n+1\}$  we have 
\begin{equation} \label{eq:vBn}
\nabla \nabla Y^i + K Y^i g= 0.
\end{equation} 

In order to  explain  this terminology, for a metric  $g=g_{ij}$ on $M$  
 consider the cone  $\wt M:= \mathbb{R}_{>0}\times M$ with the (covariant) metric $\wt g = \tfrac{1}{K} (\dd r)^2 + r^2 \sum_{i,j=1}^n g_{ij} \dd x^i \dd x^j$, where $r$ is the coordinate on  $\mathbb{R}_{>0}$.  If $g$ has constant  curvature $K$, then  the  metric $\wt g$  is flat.  
It is known and easy to check that 
the function $Y$ on $M$ satisfies \eqref{eq:vBn} if and only if the function $\wt Y:= r Y$ on $\wt M$ satisfies $\wt \nabla  \wt \nabla \wt Y =0$. In other words,   $n+1$ functions  $Y^1,...,Y^{n+1}$ on $(M,g)$ form  generalised flat coordinates,  if  and only if the functions   $\wt Y^1:= r Y^1,...,  \wt Y^{n+1} := r Y^{n+1}$ form  flat coordinates on $(\wt M,\wt g)$. 

\begin{Remark}
\label{rem:1.2}{\rm 
Equivalently,   (generalised) flat coordinates $Y^1, Y^2, ...$ for a (contravariant) metric $g$  
of curvature $K$ can be characterised by the condition
$$
g^*(\dd Y^i, \dd Y^j) = G^{ij} - K Y^i Y^j,
$$
where $G=\Bigl( G^{ij} \Bigr)$ is a constant non-degenerate matrix  of size $n\times n$, if $g$ is flat, and $(n+1)\times(n+1)$, if $K\ne 0$.    In the latter case,  $G^{ij} = \wt g^{ij}$, where  $\wt g$  is the cone metric in the coordinates $rY^1,...,rY^{n+1}$.
}\end{Remark}

The importance of flat and generalised  flat coordinates is clear. In mathematical physics, most problem are initially given in a  flat or generalised flat coordinate system. Separation of variables method is used to  find   solutions of the corresponding  ODEs/PDEs which are naturally given in separating coordinates. In order to compare them with observations or an experiment, one  typically should transform them  to the
initial (generalised) flat coordinate system.

\begin{Remark} \label{rem:new}  {\rm 
We will also use the existence of (generalised) flat coordinates in the proof of  Theorem \ref{thm:main}. Namely, the existence of flat or generalised flat coordinates implies that the metrics given by  \eqref{eq:hatg} have constant curvature which is the first statement of Theorem \ref{thm:main}.
}\end{Remark}

In this  section we describe  flat and generalised flat  coordinate system  for the  metrics $g$ given by \eqref{eq:hatg} together with the corresponding matrix $G^{ij}$ (see Remark \ref{rem:1.2}). 
   We start with the case when $\mathsf F$  contains only one vertex, so that  the (contravariant) metric $g$ has the form   $P(L)g_{\mathsf{\mathsf{LC}}}$ with a polynomial $P$ of degree at most $n+1$.

\subsubsection{Flat and generalised flat coordinates   for the metric $P(L)g_{\mathsf{LC}}$} \label{sec:flatLC}

We start with  the flat case, when $P$ has degree at most $n$. In this case,  $g=P(L)g_{\mathsf{LC}}$ is flat.

\begin{Theorem} \label{thm:casimirs}
Assume 
  $P(t)=-4\, c\, (t-\mu_1)^{k_1}(t-\mu_2)^{k_2}...(t-\mu_m)^{k_m}$ with $d:= n-(k_1+...+k_m)\ge 0$ and mutually different $\mu_i$.
Then, for every $i \in \{1,...,m\}$  and   $\alpha\in \{0,...,k_{i}-1\}$ the   
function  \begin{equation} \label{eq:10}
Y_i^{\alpha}:= \tfrac{1}{\alpha!}\tfrac{\dd^\alpha}{\dd t^\alpha }  \sqrt{\det( t\cdot {\Id} -L)}_{|t=\mu_i}    \end{equation}
has  the property $\nabla \nabla X_i^{\alpha}=0$. 
Moreover, the  functions 

\begin{equation}  \label{eq:11}
Y_{\infty}^{\alpha}:= \tfrac{1}{\alpha!}\tfrac{\dd^\alpha}{\dd t^\alpha} \sqrt{\det(\Id - t\cdot {L})}_{|t=0}    - 
R^{(\alpha)}   
\end{equation}
with $\alpha=1,...,d$ also have the property $\nabla \nabla Y_\infty^{\alpha}=0$.    The constants $R^\alpha$ are defined as follows
\begin{equation}
\label{eq:bols_5}
R^{(\alpha)} = \tfrac{1}{\alpha!} \tfrac{\dd^\alpha}{\dd t^\alpha}  \prod_{i=1}^m (1 - t\mu_i)^{k_i}{}_{|t=0}.
\end{equation}

These $n$ functions $$Y^1:= Y_1^0,..., Y^{k_1}:= Y_1^{k_1-1}, Y^{k_1+1}:= Y_2^0,...,Y^{k_1 +k_2}:= Y_2^{k_2-1},...,Y^{n-d+1}:= Y_\infty^1,...,
Y^n:= Y_\infty^d$$
are linearly independent and form flat coordinates for the contravariant metric $g=P(L)g_{\mathsf{LC}}$.
\end{Theorem}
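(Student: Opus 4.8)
\textbf{Proof proposal for Theorem \ref{thm:casimirs}.}

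The plan is to verify directly that each proposed function $Y$ satisfies $\nabla\nabla Y=0$ by exploiting the St\"ackel/Killing-tensor structure of the Levi-Civita metric $g^{\LC}$ together with the fact that $L=\operatorname{diag}(x^1,\dots,x^n)$ is geodesically compatible with $g^{\LC}$ (Facts from \S\ref{sec:1.3.1}). The key observation is that $\sqrt{\det(t\,\Id-L)}=\prod_{s}\sqrt{t-x^s}$ factorises as a product of one-variable functions, one for each separating coordinate, which is exactly the structure that makes the Hamilton-Jacobi / Hessian computation tractable. First I would compute $\nabla\nabla$ of a general function of the form $W(x)=\sum_s W_s(x^s)$ (or a product thereof) in the coordinates $x^1,\dots,x^n$, using that the Christoffel symbols of the diagonal metric $g^{\LC}$ are explicitly known from \eqref{eq:tildeg} (the weights $\bigl(\prod_{j\ne s}(x^s-x^j)\bigr)^{-1}$). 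This reduces the PDE $\nabla\nabla Y=0$ to a finite collection of algebraic identities among the functions $W_s$ and their derivatives — identities of ``separated'' type that should collapse via partial-fraction / Vandermonde manipulations.

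The main structural input is that $g=P(L)g^{\LC}$ is flat: this is where the hypothesis $\deg P\le n$ enters, and it is proved elsewhere in the paper (Remark \ref{rem:new} refers forward, but for this theorem I would use that $g^{\LC}$ itself is flat and that multiplying by $P(L)$ with $\deg P\le n$ keeps it flat — a statement one can also read off from \cite{NijenhuisAppl3} or check via the curvature formula after Theorem \ref{thm:main}). Since $\nabla\nabla Y=0$ is a linear condition on $Y$, it suffices to treat each family $Y_i^\alpha$ and $Y_\infty^\alpha$ separately. For the finite families $Y_i^\alpha$: write $\sqrt{\det(t\,\Id-L)}=\prod_s\sqrt{t-x^s}$, take the Taylor coefficient at $t=\mu_i$ up to order $k_i-1$; because $P$ vanishes to order $k_i$ at $\mu_i$, the factor $P(L)$ in the metric precisely cancels the singular behaviour of $g^{\LC}(\dd Y_i^\alpha,\dd Y_i^\alpha)$ near the locus $x^s=\mu_i$, and one checks the Hessian condition by a residue argument in $t$. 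For the ``at infinity'' family $Y_\infty^\alpha$ one does the same expansion at $t=0$ of $\sqrt{\det(\Id-tL)}$; the subtracted constants $R^{(\alpha)}$ from \eqref{eq:bols_5} are chosen exactly so that the would-be obstruction (a term polynomial in the $x^s$ that would otherwise spoil $\nabla\nabla Y=0$ once $\deg P$ is strictly less than $n+1$) is killed. Concretely I would verify $g^*(\dd Y^i,\dd Y^j)=G^{ij}-0\cdot Y^iY^j=G^{ij}$ for a constant matrix $G$ (Remark \ref{rem:1.2}), which for flat metrics is equivalent to $\nabla\nabla Y^i=0$ provided the $Y^i$ are also affine, i.e. have vanishing Hessian — so the two checks (constancy of $g^*(\dd Y^i,\dd Y^j)$ and linear independence) together close the argument.

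Linear independence of the $n$ functions is the easy part: compute the Jacobian $\partial Y^k/\partial x^s$ at a generic point; for the finite families the derivative of the $\alpha$-th Taylor coefficient at $\mu_i$ of $\prod_s\sqrt{t-x^s}$ with respect to $x^s$ produces, after clearing the common factor $\prod_s\sqrt{\mu_i-x^s}$, a confluent-Vandermonde-type matrix in the nodes $\mu_1,\dots,\mu_m$ (with multiplicities $k_1,\dots,k_m$) augmented by the $d$ rows coming from the $Y_\infty^\alpha$'s; its determinant is a nonzero product of differences $\mu_i-\mu_j$ times a power of $\prod(\mu_i-x^s)$, hence nonvanishing generically.

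The step I expect to be the main obstacle is the bookkeeping in the residue/partial-fraction computation showing $\nabla\nabla Y=0$: one must track how the poles at $t=x^s$ in $\prod_s\frac{1}{\sqrt{t-x^s}}$-type expressions interact with the metric weights $\prod_{j\ne s}(x^s-x^j)^{-1}$ and with the polynomial $P(L)$, and verify that all $x$-dependence cancels. A clean way to organise this is to introduce the generating function $\sum_\alpha Y_i^\alpha (t-\mu_i)^\alpha = \sqrt{\det(t\,\Id-L)}$ and prove the Hessian identity at the level of generating functions in the spectral parameter $t$, reducing everything to a single identity of rational functions in $t$ and the $x^s$; this is the technical heart and where I would spend the bulk of the effort, the rest being formal.
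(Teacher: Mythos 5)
Your proposal follows essentially the same route as the paper's proof: compute $g^*(\dd Y(s),\dd Y(t))$ at the level of the generating function $Y(t)=\sqrt{\det(t\cdot\Id-L)}$, collapse the sum over the coordinates $x^s$ via the Lagrange-interpolation/partial-fraction identity $\sum_i P(x_i)/\prod_{j\ne i}(x_i-x_j)=a_{n+1}$, differentiate in $s,t$ at the roots of $P$ to conclude that $\bigl(g^*(\dd Y^i,\dd Y^j)\bigr)$ is a constant nondegenerate matrix, and invoke Remark \ref{rem:1.2}; linear independence then comes for free from the nondegeneracy of that matrix, so your separate confluent-Vandermonde Jacobian computation is not needed. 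One small correction: the constants $R^{(\alpha)}$ cannot be what saves $\nabla\nabla Y_\infty^\alpha=0$, since subtracting a constant changes neither the differential nor the Hessian of a function --- in the paper they are a normalisation that matters only later, for the auxiliary function $\phi$ and the multi-block construction of Theorem \ref{thm:F_casimirs}.
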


According to our literature research, the following special cases of Theorem \ref{thm:casimirs} were known. The  classically known case is  when  the polynomial $P$ has only simple roots (see Example \ref{ex:riem1}). The case  $P(t)= t^k $, $k=0,...,n$ 
(see Example \ref{ex:riemb})   is solved  in 
\cite{blaszak2007}. 
  A discussion of the  general case from the viewpoint of the limit procedure can be found in \cite{Kalnins1984} and in particular, the  case when  $P$ has one double root and other roots are simple is given there with explicit formulas.

\begin{Remark}\label{rem:matrix}{\rm
Calculations in the proof of Theorem \ref{thm:casimirs} also give  us the matrix of the contravariant metric $g = P(L)g_{\mathsf{LC}}$ in coordinates $Y^1,...,Y^n$.  
It is blockdiagonal with $m+1$  blocks of dimensions $k_1,...,k_m,d$. The $i$-th block 
 with $i= 1,...,m$ is  the product of $k_i\times k_i$ matrices  $\wt P_i(J + \mu_i \cdot \Id)$   and  $ c\cdot A$, where  $\wt P_i(t)= \prod_{s\ne i} ( t - \mu_s)^{k_s}$, and
 the matrices $J$ (``Jordan block'') 
  and $A$  (``antidiagonal matrix'')  are   given by   
 $$
J= \begin{pmatrix} 0  & 0      & \cdots & \cdots   & 0\\ 
                   1  & 0      &    0   &  \cdots  & 0\\ 
									 0  & 1      &\ddots  &  \ddots  & \vdots \\
							\vdots 	& \ddots & \ddots &   \ddots & 0  \\
							    0   &  0     & \cdots &    1     & 0   \end{pmatrix} \ \  \textrm{and} \ \  
				A= \begin{pmatrix} 0  & \cdots  & \cdots     &0        & 1\\ 
                           0  & \cdots      &  0     &   1     & 0\\ 
									   \vdots   &   \udots    &\udots  &  0    & \vdots \\
							             0	& \udots & \udots &   \vdots & 0  \\
							             1   &  0     & \cdots &    0     & 0   \end{pmatrix} 					
									.
$$
The $(m+1)$-st block is the product of $d\times d$ matrices $-\wt P_\infty( J)$ and $c \cdot A$,  were $\wt P_\infty(t) = \prod_{s} ({ 1 -t \mu_s})^{k_s}$.
}\end{Remark}

\begin{Remark} \label{rem:complex}{\rm
Some of the  flat  coordinates $Y^k$ can be complex-valued. This may be related to complex roots $\mu_i$, or can appear if  $\mu_i$ is real but  $\det(\mu_i \cdot \Id-L)<0$. In the first  case, the real and the imaginary  parts  of the complex-valued coordinates contribute to real flat coordinates.  
In the second  case,    the  square root  and its derivatives are  pure imaginary at the point $\mu_i$.  The   imaginary  parts of them give real coordinates.  Note that  the matrix of $g^{ij}$  in the flat coordinates $Y^k$ from Remark \ref{rem:matrix} is calculated as the matrix 
of scalar products  of the differentials $\dd Y^i$ and $\dd Y^j$, in particular if $Y^i$ and $Y^j$  are purely imaginary  the  $g$-scalar product of the corresponding differentials is minus the $g$-scalar product of  the differentials of the corresponding imaginary parts.   In other words, when we pass from the   pure imaginary coordinates to their  imaginary parts we should multiply the corresponding blocks  of the matrix  from Remark \ref{rem:matrix} by $-1$. 
}\end{Remark} 

Let us recall two known examples: Example \ref{ex:riem1}  is classical and Example   \ref{ex:riemb} can be found in \cite{blaszak2007,MB}.

\begin{Ex} \label{ex:riem1}{\rm  Assume  the polynomial $P=-4\prod_{i=1}^n (t-\mu_i)$ has  $n$ simple real roots  $\mu_1<...<\mu_n$.  
  Then,   
$$
Y^i = \sqrt{\det(\mu_i \Id - L)}= \sqrt{\prod_{s=1}^n(\mu_i - x^s)}. 
$$
In these coordinates, the matrix of $g$ is diagonal: $g_{ij}= \operatorname{diag}\left(\prod_{s\ne 1}(\mu_1-\mu_s),\dots,\prod_{s\ne n}(\mu_n-\mu_s)\right)$.
After the cosmetic change of the form 
\begin{equation} 
\label{eq:ell} 
Y_{\mathrm{new}}^i= \frac{Y_{\mathrm{old}}^i}{\sqrt{\prod_{s\ne i}(\mu_i-\mu_s)}}  \ \  \  \left( =  \sqrt{\frac{\prod_{s=1}^n (\mu_i - x^s)    }{\prod_{s\ne i}(\mu_i-\mu_s)}}\right) 
\end{equation}  
the metric takes the form $g= \operatorname{diag}(\pm 1,\dots,\pm 1)$.  

In particular, if the metric is Riemannian,    the polynomial $P$ necessarily has $n$  or $n-1$ 
real simple  root and the range of the coordinates  is  as follows: $\mu_1<x^1<\mu_2<\cdots < \mu_n<x^n$ (in the case of $n-1$ simple roots we think that $\mu_1=-\infty$).
In this case, the coordinates  $Y_{\mathrm{new}}^i$   are real-valued and the metric $g$ in these coordinates is the standard Euclidean metric.  Formula \eqref{eq:ell}, known since Jacobi \cite{jacobi} and Neumann  \cite{neumann},   expresses Cartesian coordinates $Y^i$ in terms of ellipsoidal coordinates  $x^i$ and  is widely used in many  physical applications.

}\end{Ex}

\begin{Ex}\label{ex:riemb}{\rm  If $P(t)=t^n$,  the flat coordinates are coefficients of the characteristic polynomial of $L$: 
\begin{equation}
\det(t\cdot \Id - L)= t^{n}+ Y^{n} t^{n-1} + Y^{n-1} t^{n-2}+...+Y^1.
\end{equation} 
In the coordinates $Y^i$, the matrix of $g$ is the antidiagonal matrix $A$  from Remark \ref{rem:matrix}. 
}\end{Ex} 

Next, let us consider the case of metrics $P(L)g_{\mathsf{\mathsf{LC}}}$ of constant nonzero curvatures. In this case, $\deg P(L) = n+1$.

\begin{Theorem} \label{thm:generalised_casimirs}
Assume 
  $P(t)=-4{K}  (t-\mu_1)^{k_1}(t-\mu_2)^{k_2}...(t-\mu_m)^{k_m}$ with $ k_1+...+k_m=n+1$.
Then, the  $n+1$ 
functions 
\begin{equation}  
\label{eq:12} 
Y_i^{\alpha}:= \tfrac{\dd^\alpha}{\dd t^\alpha} \sqrt{\det(t\cdot {\Id}-L)}_{|t=\mu_i}   \mbox{  with $\alpha\in \{0,...,k_{i}-1\}$  \  \textrm{and} \ $i \in \{1,...,m\}$}     
 \end{equation}
form generalised flat coordinates for the (contravariant) metric $g  = P(L) g_{\mathsf{LC}}$. 
\end{Theorem}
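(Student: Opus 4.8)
The plan is to reduce Theorem \ref{thm:generalised_casimirs} to the flat case already handled by Theorem \ref{thm:casimirs} via the cone construction described just before Remark \ref{rem:1.2}. Recall that $\wt M = \mathbb{R}_{>0}\times M$ carries the flat metric $\wt g = \tfrac1K(\dd r)^2 + r^2 g$, and that a function $Y$ on $M$ satisfies the generalised-flat equation $\nabla\nabla Y + K Y g = 0$ if and only if $\wt Y := rY$ satisfies $\wt\nabla\wt\nabla\wt Y = 0$ on $(\wt M,\wt g)$. So it suffices to write down an $L$-type operator on the cone whose associated metric $\wt P(\wt L)\,\wt g_{\mathsf{LC}}$ equals $\wt g$, apply Theorem \ref{thm:casimirs} there, and then check that the resulting flat coordinates on $\wt M$ are exactly $r$ times the functions $Y_i^\alpha$ in \eqref{eq:12}.

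First I would introduce on $\wt M$ a coordinate $x^0$ related to $r$ (the natural guess, matching the degree bookkeeping, is $r^2 = $ something affine in $x^0$, or more precisely one treats $r$ as producing an extra ``block coordinate'' so that $\wt L = \operatorname{diag}(x^0, x^1,\dots,x^n)$ in dimension $n+1$) and verify by a direct computation that with $\wt P(t) = -4K\prod_i (t-\mu_i)^{k_i}$ of degree $n+1$ one has $\wt P(\wt L)\,\wt g_{\mathsf{LC}}^{(n+1)} = \tfrac1K(\dd r)^2 + r^2 g$ after the substitution; this is where the hypothesis $k_1+\dots+k_m = n+1$ (equivalently $\deg P = n+1$, i.e. $d=0$ in the notation of Theorem \ref{thm:casimirs}) is used, since it is precisely the case where Theorem \ref{thm:casimirs} produces no ``$Y_\infty$'' coordinates and exactly $n+1$ coordinates of the form $\tfrac{1}{\alpha!}\tfrac{\dd^\alpha}{\dd t^\alpha}\sqrt{\det(t\Id - \wt L)}_{|t=\mu_i}$. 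Second, I would apply Theorem \ref{thm:casimirs} to the $(n+1)$-dimensional flat metric $\wt P(\wt L)\,\wt g_{\mathsf{LC}}$ to get $n+1$ flat coordinates $\wt Y_i^\alpha$ on $\wt M$. Third, I would compute $\sqrt{\det(t\Id - \wt L)} = \sqrt{\,t - x^0\,}\cdot\sqrt{\det(t\Id - L)}$ and observe that the $\sqrt{t-x^0}$ factor, evaluated on the cone locus corresponding to the chosen relation between $x^0$ and $r$, contributes exactly the overall factor $r$; hence $\wt Y_i^\alpha = r\,Y_i^\alpha$ (up to the harmless constant $\tfrac1{\alpha!}$, which I would either absorb into the normalisation or note is irrelevant since scaling a flat coordinate keeps it flat). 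By the cone correspondence this gives $\nabla\nabla Y_i^\alpha + K Y_i^\alpha g = 0$, and linear independence of the $Y_i^\alpha$ follows from that of the $\wt Y_i^\alpha$ (dividing by the nowhere-zero function $r$).

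The main obstacle I anticipate is the bookkeeping in the first step: getting the exact affine substitution $r \mapsto x^0$ (including the role of the constant $-4K$ and of the curvature normalisation $\tfrac1K(\dd r)^2$) so that $\wt P(\wt L)\,\wt g_{\mathsf{LC}}$ really reproduces the cone metric on the nose, and then tracking how the extra factor $\sqrt{t-x^0}$ propagates through the derivatives $\tfrac{\dd^\alpha}{\dd t^\alpha}$ at $t=\mu_i$ — a priori $\tfrac{\dd^\alpha}{\dd t^\alpha}$ of a product mixes the two factors, so one must check that on the relevant locus (where $r$ is fixed) only the $Y_i^\alpha$-part survives with the single multiplicative $r$. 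An alternative, which I would present as a remark or fallback, is a direct verification: plug \eqref{eq:12} straight into $\nabla\nabla Y + KYg = 0$ using the known Christoffel symbols of $P(L)g_{\mathsf{LC}}$ (the same computation that underlies the proof of Theorem \ref{thm:casimirs}), checking the extra $KYg$ term is exactly accounted for by the degree jump from $n$ to $n+1$ in $P$; this avoids the cone but repeats the Theorem \ref{thm:casimirs} calculation, so the cone reduction is preferable if the substitution can be pinned down cleanly.
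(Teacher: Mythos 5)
There is a genuine gap in your main route, and it is in Step 1. The cone $\wt M=\mathbb{R}_{>0}\times M$ over the one-block metric $g=P(L)g_{\mathsf{LC}}$ is \emph{not} a one-block metric in the coordinates $(x^0,x^1,\dots,x^n)$ with $x^0=x^0(r)$: for a one-block metric $\wt P(\wt L)\,\wt g_{\mathsf{LC}}$ the component $\wt g^{00}=\wt P(x^0)/\prod_{j\ge 1}(x^0-x^j)$ necessarily depends on all coordinates, whereas the cone metric has $\wt g^{00}$ depending on $x^0$ alone. No choice of substitution $r\mapsto x^0$ fixes this: one would need $(x^s-x^0)/r(x^0)^2$ to be independent of $x^0$. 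In fact the paper's own reduction of Theorem \ref{thm:main} to the flat case shows exactly what the cone is in these coordinates: a \emph{two-block} metric whose forest is a one-dimensional root (the $r$-direction, with $P_0(t)=-4K(t-\lambda)$ and $r^2=\lambda-x^0$) with the original block attached by an edge labelled $\lambda$. So Theorem \ref{thm:casimirs} simply does not apply to the cone in the coordinates $(x^0,x)$; the flat cone does of course admit one-block (elliptic) separating coordinates, but those are different functions of $(r,x)$, and the factorization $\sqrt{\det(t\Id-\wt L)}=\sqrt{t-x^0}\,\sqrt{\det(t\Id-L)}$ that drives your Step 3 is not available. (Even granting it, the Leibniz mixing of the two factors under $\tfrac{\dd^\alpha}{\dd t^\alpha}$ at $t=\mu_i$, which you flag but do not resolve, would spoil $\wt Y_i^\alpha=rY_i^\alpha$ for $\alpha\ge 1$.) Appealing instead to the flat coordinates of the two-block cone metric would be circular here, since the paper builds those (Theorem \ref{thm:F_casimirs}, Proposition \ref{prop:bols1}) \emph{from} the generalised flat coordinates of the fibre, i.e.\ from the statement you are trying to prove.

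Your fallback is essentially what the paper does, and it is the route you should take. The paper verifies the equivalent first-order characterisation of Remark \ref{rem:1.2}, namely $g^*(\dd Y_i^\alpha,\dd Y_j^\beta)=G^{ij}-K\,Y_i^\alpha Y_j^\beta$ with $G$ constant and nondegenerate, by computing the generating function
$g^*\bigl(\dd Y(s),\dd Y(t)\bigr)$ for $Y(t)=\sqrt{\det(t\Id-L)}$, applying the partial-fraction identity $\sum_i P(x_i)/\prod_{j\ne i}(x_i-x_j)=a_{n+1}$ with the two auxiliary variables $s,t$, and then differentiating at the roots $\mu_i$; the term $\tfrac14 a_{n+1}Y(s)Y(t)=-K\,Y(s)Y(t)$ is precisely the ``extra'' contribution from the degree jump to $n+1$ that you anticipated. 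Done this way, Theorems \ref{thm:casimirs} and \ref{thm:generalised_casimirs} come out of one and the same computation (with $a_{n+1}$ zero or not), so nothing is gained by routing the curved case through the cone.
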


\begin{Remark}\label{rem:matrix1}{\rm
Calculations in the proof of Theorem \ref{thm:generalised_casimirs} also give the matrix $G^{ij}$ from Remark \ref{rem:1.2}. 
It is blockdiagonal with $m$  blocks of dimensions $k_1,...,k_m $. The $i$-th  block   is given by the product of the $k_i\times k_i$ matrices   $ \wt P(J + \mu_i \cdot \Id)$   with $ \wt P=     \prod_{s\ne i} (t - \mu_s)^{k_s}$ and  $K\cdot A$, where $J$ and $A$ are as in Remark \ref{rem:matrix}. 
}\end{Remark}

\begin{Remark} \label{rem:1.6}{\rm
Some of the generalised flat coordinates from Theorem \ref{thm:generalised_casimirs}  may be complex-valued; one obtains real-valued generalised flat coordinates as explained in Remark \ref{rem:complex}. 
}\end{Remark}

\subsubsection{(Generalised)  flat coordinates for the metric with  an arbitrary graph $\mathsf F$.}\label{sect:3steps}

We are given an in-direct forest $\mathsf F$ endowed with the following additional data:

\begin{itemize} 
\item the vertices of $\mathsf F$ are labelled by numbers $\alpha = 1,\dots, B$;  
\item each edge $\vec{\beta\alpha}$  of $\mathsf F$ is assigned with a number $\lambda_\beta$;  
\item each vertex $\alpha$ is assigned with a natural number $n_\alpha$ (dimension of the $\alpha$-block),    a diagonal (Nijenhuis, in the terminology of \cite{Nijenhuis}) operator $L_\alpha$  and  a polynomial $P_\alpha$ of degree $\le n_\alpha + 1$.
\end{itemize}

Moreover, depending on the roots $\mu_1, \dots, \mu_m$ of $P_\alpha= c_\alpha \prod_{i=1}^m (t - \mu_i)^{k_i}$ and their multiplicities $k_1,\dots, k_m$,  in the previous Section \ref{sec:flatLC} we have constructed the functions
$$
Y_i^s \quad i = 1,\dots, m, \ \ s = 0,\dots, k_i-1, \quad\mbox{and} 
\quad Y_\infty^s \quad s = 1,\dots, d,
$$
where $d=n_\alpha - \deg P_\alpha$ (the latter functions appears if $\deg P_\alpha < n_\alpha$).
 
According to Theorems \ref{thm:casimirs} and \ref{thm:generalised_casimirs},  all these functions form a complete set of (generalised) flat coordinates for each  {\it one-block} metric $\tilde g_\alpha=P_\alpha(L_\alpha) g_\alpha^{\mathsf{\mathsf{LC}}}$.   Notice that the number of such functions is $n_\alpha$ if $\tilde g_\alpha$ is flat, and is $n_\alpha + 1$ if $\tilde g_\alpha$ has constant non-zero curvature $K_\alpha$.

For each $\alpha$-block the above collection of (generalised) flat coordinates is characterised by a constant matrix  $G_\alpha$ (see Remark \ref{rem:1.2}), which is explicitly described in Remarks \ref{rem:matrix} and \ref{rem:matrix1} for the cases $K_\alpha = 0$ and $K_\alpha\ne 0$ respectively.

Besides these functions, for each flat block, we also introduce an auxiliary function
\begin{equation}  \label{eq:bols_8}
\phi := \tfrac{1}{c} \left(\tfrac{1}{(d+1)!}\tfrac{\dd^{d+1}}{\dd t^{d+1}} \sqrt{\det(\Id - t\cdot {L})}_{|t=0} \   - \,\tfrac{1}{2}R^{(d+1)}\right),  
\end{equation}
where $R^{(d+1)}$ is defined by \eqref{eq:bols_5}.  The formula reminds the definition of $Y_\infty^s$ for $s=d+1$ (cf. \eqref{eq:11}).  However,  $\phi$ is not a flat coordinate but satisfies the following important relations (the proof will be given in Section \ref{sect:proof1.5-1.6}):
$$
g^* (\dd Y^i , \dd \phi) =  Y^i \quad\mbox{and} \quad g^*(\dd \phi, \dd \phi) =   d \phi,
$$
where $g = \tilde g_\alpha$ is a one-block metric.

Our goal is to construct a complete set of (generalised) flat coordinates for the {\it multi-block} metric 
$$
g = \operatorname{diag}(g_1,\dots,g_B) =  \operatorname{diag}(f_1 \tilde g_1,\dots, f_B\tilde g_B),
$$ 
defined by \eqref{eq:hatg}
from the above described coordinates on individual blocks and also to describe the corresponding matrix $G$ via the matrices $G_1, \dots, G_B$. 

The construction consists of three steps.  

{\bf Step 1.}    Removing ``redundant'' coordinates from each block.

For each $\alpha$-block, consider all the incoming edges $\vec{\gamma_i \alpha}$ related to the $\gamma_i$-blocks of non-zero curvature $K_{\gamma_i}\ne 0$. According to our construction, these edges are endowed with numbers $\lambda_{\gamma_i}$'s each of which is a simple root of the polynomial $P_\alpha$ so that the collection of (generalised) flat coordinates for the $\alpha$-block contains the functions
$$
Y_{\gamma_1}^0 = \sqrt{\det(\gamma_i\cdot \Id - L_\alpha)},  \  \dots \  , Y_{\gamma_s}^0 = \sqrt{\det(\gamma_s\cdot \Id - L_\alpha)}.
$$
Notice that the corresponding matrix $G_\alpha$ has the following form (cf. Proposition \ref{prop:bols1} below)
$$
G_\alpha = \operatorname{diag}(c_1, \dots, c_s, \widetilde G_\alpha) 
$$
where $\widetilde G_\alpha$ is the matrix for the remaining flat coordinates and $c_i=(G_\alpha)^{ii}$ is defined from
the relation $g_\alpha^* (\dd Y_{\gamma_i}^0, \dd Y_{\gamma_i}^0) = c_i - K_\alpha (Y_{\gamma_i}^0)^2$.

We remove the functions from the set of (generalised) flat coordinates on the $\alpha$-block and reduce the matrix $G_\alpha$ to $\widetilde G_\alpha$ respectively. 

As we see,  each incoming edge related to a neighbouring {\it block of non-zero curvature} leads to removing one of the previously constructed coordinates. Each incoming edges related to a {\it flat block} will result in a certain modification of  one of these coordinates.      

{\bf Step 2.}   Modifying some of coordinates on each block.

For each $\alpha$-block consider all the incoming edges $\vec{\beta_j \alpha}$ related to the neighbouring flat $\beta_j$-blocks. According to our construction, the corresponding numbers $\lambda_{\beta_j}$'s related to these edges are roots of $P_\alpha$ of multiplicity at least two.   Some of these numbers may coincide so that the $\beta_j$ blocks can be naturally partitioned into groups related to equal labels.  The modification procedure will be applied separately to each of these groups.  W.l.o.g. assume that $\lambda_{\beta_1} = \lambda_{\beta_2} = \dots = \lambda_{\beta_p}=\lambda$.  Then our list of (generalised) flat  coordinates  on the $\alpha$-block contains the functions
$$
Y_\lambda^i = \tfrac{1}{i!} \tfrac{\dd^i}{\dd t^i}{}_{|\, t=\lambda}\sqrt{\det(t\cdot \Id-L_\alpha)}, \quad i=0,\dots, k-1,
$$ 
where $k$ denotes the multiplicity of $\lambda$ (as a root of $P_\alpha$).  Recall that for each flat $\beta_j$-block we have (uniquely) defined a function $\phi_{\beta_j}$.  We now modify the function $Y_\lambda^{k-1}$ as follows (cf. Proposition \ref{prop:bols2} below): 
$$
Y_\lambda^{k-1}  \mapsto \widetilde Y_\lambda^{k-1} = Y_\lambda^{k-1} - b\, Y_\lambda^0 \cdot \sum_{j=1}^p \phi_{\beta_j}
$$
where $b=\tilde g_\alpha^* (\dd Y_\lambda^0, \dd Y_\lambda^{k-1})$   (notice that $b\ne 0$!). 

To summarise, each incoming edge $\vec{\gamma \alpha}$ to the vertex $\alpha$ requires a certain modification of the (canonically chosen) set of (generalised) flat coordinates on the $\alpha$-block.  This modification depends on whether the corresponding neighbouring $\gamma$-block is flat or not.  Now each $\alpha$-block possesses a new (reduced and modified) set of functions which we denote by $u_{\alpha}^1, \dots , u_{\alpha}^{\tilde n_\alpha}$ where 
$$
\tilde n_{\alpha} = 
\begin{cases}
n_\alpha - s, & \mbox{if the $\alpha$-block is flat,} \\
n_\alpha +1 - s, & \mbox{if the $\alpha$-block has non-zero constant curvature,} \\
\end{cases} 
$$   
where $s$ is the number of incoming edges $\vec{\gamma \alpha}$ related to $\gamma$-blocks of non-zero curvature.
   
{\bf Step 3.}  To get a complete set of (generalised) flat coordinates for the metric 
$$
g = \operatorname{diag} ( g_1,\dots, g_B),    \qquad  g_\alpha  = f_\alpha \tilde g_\alpha =
f_\alpha P_\alpha(L_\alpha) g_\alpha^{\mathsf{LC}},
$$
we now replace the functions $u_{\alpha}^i$ by the following simple rule: 
$$
u_\alpha^ i  \mapsto  \tilde u_\alpha^i =   (f_\alpha)^{-1/2} \cdot u_\alpha^i ,   
$$
where $f_\alpha$ is defined by \eqref{eq:falpha} so that, in more detail,  the ``new''  generalised flat coordinate is
\begin{equation}
\label{eq:bls1} 
\tilde u_\alpha^i  = u_\alpha^i   \cdot \!\!\! \prod_{\begin{array}{c}s\stackrel{\prec}{=} \alpha \\
 \textrm{$s$ not a root} \end{array}} \!\!\!\!\!\! \sqrt{\det(L_{\operatorname{next}(s)}-\lambda_{s}\cdot\Id_{n_s})}.
 \end{equation}
 If $\alpha$ is a root, then $f_\alpha = 1$ and $\tilde u_\alpha^i  = u_\alpha^i$.

The final conclusion is as follows.
\begin{Theorem}\label{thm:F_casimirs}
The above constructed functions $\tilde u_\alpha^i$ form a complete set of independent (generalised) flat coordinates for $g = \operatorname{diag} \left(  g_1,\dots,  g_B\right)$ defined by \eqref{eq:hatg}.  Moreover, the corresponding (constant) matrix $G$ is block-diagonal and has the form
$$
G=\operatorname{diag}\left(\widetilde G_1 , \dots, \widetilde G_B\right)
$$
where $\widetilde G_\alpha$ is the reduced matrix\footnote{Notice that in general the size $\tilde n_\alpha$ of $\widetilde G_\alpha$ does not coincide with the dimension $n_\alpha$ of the $\alpha$-block.} for the $\alpha$-block obtained by removing ``redundant'' coordinates (see Step 1).
\end{Theorem}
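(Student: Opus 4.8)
\textbf{Proof proposal for Theorem \ref{thm:F_casimirs}.}

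The plan is to verify the defining relations \eqref{eq:vBn} (equivalently, the algebraic characterisation of Remark \ref{rem:1.2}) directly for the functions $\tilde u_\alpha^i$, block by block, and then to assemble these computations into the global statement. The starting point is the one-block case: Theorems \ref{thm:casimirs} and \ref{thm:generalised_casimirs} already give, for each $\alpha$, a complete set of (generalised) flat coordinates for the one-block metric $\tilde g_\alpha = P_\alpha(L_\alpha)g_\alpha^{\LC}$, together with the explicit constant matrix $G_\alpha$ from Remarks \ref{rem:matrix} and \ref{rem:matrix1}. The key auxiliary identities are those stated just before Step 1 — namely $g^*(\dd Y^i,\dd\phi)=Y^i$ and $g^*(\dd\phi,\dd\phi)=d\phi$ for $g=\tilde g_\alpha$ — which I would prove first by a direct computation in the coordinates $x_\alpha^1,\dots,x_\alpha^{n_\alpha}$, exploiting the generating-function form $\sqrt{\det(t\,\Id-L_\alpha)}$ and the standard Lagrange-interpolation identities for $g_\alpha^{\LC}$ (these are the same identities that underlie the proofs of Theorems \ref{thm:casimirs} and \ref{thm:generalised_casimirs}, so the work is essentially parallel).

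Next I would justify Step 1: for an incoming edge $\vec{\gamma\alpha}$ from a block of non-zero curvature, $\lambda_\gamma$ is a simple root of $P_\alpha$ by conditions (ii)--(iii), so the coordinate $Y_\gamma^0=\sqrt{\det(\gamma\,\Id-L_\alpha)}$ is well defined; one checks that on the remaining coordinates the matrix $G_\alpha$ splits off a one-dimensional block exactly as claimed, using the block structure from Remark \ref{rem:matrix}/\ref{rem:matrix1} (the off-diagonal scalar products involving $Y_\gamma^0$ vanish because $\wt P_\gamma(\gamma\,\Id+\mu_i\Id)$ has the required triangular form at a simple root — this is the content of the cited Propositions). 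Then Step 2: when several incoming flat blocks $\beta_1,\dots,\beta_p$ carry the same label $\lambda$, condition (iii) forces $\lambda$ to be a double (hence, with (ii), exactly multiplicity-matching) root of $P_\alpha$, and the modified coordinate $\widetilde Y_\lambda^{k-1}=Y_\lambda^{k-1}-b\,Y_\lambda^0\sum_j\phi_{\beta_j}$ is shown to satisfy \eqref{eq:vBn} by combining the one-block Hessian identity for $Y_\lambda^{k-1}$ with the two $\phi$-identities above; one records that $b=\tilde g_\alpha^*(\dd Y_\lambda^0,\dd Y_\lambda^{k-1})\ne0$ from the antidiagonal form of the block.

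Step 3 is the passage from $\tilde g_\alpha$ to $g_\alpha=f_\alpha\tilde g_\alpha$ and then to the full metric $g=\operatorname{diag}(g_1,\dots,g_B)$. The conformal rescaling is handled by the observation that $\tilde u_\alpha^i=(f_\alpha)^{-1/2}u_\alpha^i$ and that $f_\alpha$ depends only on the coordinates of the blocks strictly preceding $\alpha$ in $\mathsf F$; thus $\dd(f_\alpha)^{-1/2}$ lives in a ``transversal'' set of directions, and one computes $g^*(\dd\tilde u_\alpha^i,\dd\tilde u_\alpha^j)$ by expanding the product rule and using: (a) the one-block relations for $u_\alpha^i$ within the $\alpha$-block; (b) the fact that on each earlier $\beta$-block ($\beta\prec\alpha$) the metric $g_\beta$ is $f_\beta P_\beta(L_\beta)g_\beta^{\LC}$ and that $\partial_{x_\beta}\log f_\alpha$ is governed precisely by the factor $\det(\lambda_s\Id_{n_s}-L_{\operatorname{next}(s)})^{-1}$ appearing in \eqref{eq:falpha}; and (c) the ``telescoping'' that makes the cross-terms combine into $G^{ij}-K\tilde u_\alpha^i\tilde u_\alpha^j$ with the \emph{same} curvature $K=-\tfrac14\overset{\rho}a_{n_\rho+1}$ for the root $\rho$ (or $K=0$ when $\mathsf F$ is disconnected). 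Here one must also verify the mixed scalar products $g^*(\dd\tilde u_\alpha^i,\dd\tilde u_\beta^j)$ for $\alpha\ne\beta$ vanish, which follows because the two families involve disjoint coordinate blocks except through the $f$-factors, and the chain-rule contributions cancel by the defining property (ii) ($\lambda_\beta$ a root of $P_{\operatorname{next}(\beta)}$) — this is exactly where the labelled-forest combinatorics enters. Finally, independence of the $\tilde u_\alpha^i$ is inherited from the one-block case since the rescalings and modifications are invertible and triangular with respect to the block order, and the resulting $G=\operatorname{diag}(\widetilde G_1,\dots,\widetilde G_B)$ is non-degenerate because each $\widetilde G_\alpha$ is.

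The main obstacle I anticipate is Step 3's bookkeeping: controlling the cross-terms produced by differentiating $(f_\alpha)^{-1/2}$ and showing that they reorganise exactly into the $-K\,\tilde u^i\tilde u^j$ correction with a single global $K$, rather than block-dependent curvatures. This is where the recursive structure of $\mathsf F$, conditions (i)--(iii), and the precise normalisation $\overset{\beta}a_{n_\beta+1}=P_{\operatorname{next}(\beta)}'(\lambda_\beta)$ must all be used in concert; it is essentially the same computation that shows the metrics \eqref{eq:hatg} have constant curvature (Remark \ref{rem:new}), so in practice I would prove the curvature statement of Theorem \ref{thm:main} and Theorem \ref{thm:F_casimirs} together, the flat-coordinate construction being the constructive witness that $g$ has the claimed constant curvature.
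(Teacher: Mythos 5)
Your overall strategy coincides with the paper's: both verify the algebraic characterisation of Remark \ref{rem:1.2}, i.e.\ $g^*(\dd Y^i,\dd Y^j)=G^{ij}-KY^iY^j$, building on the one-block Theorems \ref{thm:casimirs}--\ref{thm:generalised_casimirs} and on the auxiliary identities $g^*(\dd Y^i,\dd\phi)=Y^i$, $g^*(\dd\phi,\dd\phi)=2\phi$. The organisation differs, though: the paper never attempts your ``all at once'' Step-3 computation with the full product $f_\alpha$. Instead it isolates a clean two-block warped-product lemma, $g=g_1+\tfrac1{f^2}g_2$ with $f$ a (generalised) flat coordinate of the base satisfying $g_1^*(\dd f,\dd f)=K_2-K_1f^2$, proves Propositions \ref{prop:bols1} (non-flat fibre: drop $f$ from the base coordinates, multiply the fibre coordinates by $f$) and \ref{prop:bols2} (flat fibre: modify $u^p\mapsto u^p-b\,f\phi$), and then attaches blocks one at a time by induction over $\mathsf F$. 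This buys two things your global plan lacks. First, at each inductive step only a single warping factor appears, so the ``telescoping'' you correctly identify as the main obstacle is reduced to the two-block identities already proved. Second, and this is the genuine gap in your proposal: when a flat block $\beta_j$ is \emph{not} a leaf, the function $\phi_{\beta_j}$ used in your Step 2 must satisfy its defining relations not merely with respect to the one-block metric $\tilde g_{\beta_j}$ (which is all you propose to prove) but with respect to the entire sub-metric carried by the subtree above $\beta_j$; otherwise the pairings of $\dd\phi_{\beta_j}$ with the coordinates of deeper blocks are uncontrolled and your cross-term cancellation cannot be closed. This persistence is exactly Proposition \ref{prop:bols3} of the paper ($\phi_g=\phi_{g_1}$ after gluing), and it is the hinge of the induction; you need to state and prove an analogue of it.

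Two smaller corrections. The mixed scalar products $g^*(\dd\tilde u_\alpha^i,\dd\tilde u_\beta^j)$ for $\alpha\ne\beta$ do \emph{not} vanish when the curvature $K\ne0$; they equal $-K\,\tilde u_\alpha^i\,\tilde u_\beta^j$ (cf.\ relation (i2) in Proposition \ref{prop:bols1}) --- only the constant part $G^{ij}$ vanishes, which is what block-diagonality of $G$ actually means. And the warped-product compatibility conditions on $f$ (that $f=\sqrt{\det(\lambda_\gamma\Id-L_{\operatorname{next}(\gamma)})}$ is a flat coordinate of the base and that $g_1^*(\dd f,\dd f)=K_2-K_1f^2$) are exactly where conditions (ii)--(iii) and the normalisation $\overset{\gamma}{a}_{n_\gamma+1}=P'_{\operatorname{next}(\gamma)}(\lambda_\gamma)$ enter; you gesture at this but should isolate it as an explicit lemma, since the entire computation rests on it.
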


\weg{

 \subsection{Special properties of the Riemannian case} 

Let us first consider the case of one block.  Assume the metric $P(L)g_{\mathsf{LC}}$  is definite. 
This  implies that all polynomials $P_\alpha$ have degree at least $n_\alpha$ and has simple real roots. 
We denote them by  $\mu_1 <\mu_2<...$. If the metric is positive definite and the curvature is positive or zero, 
the condition  that all its components $P(x^i) \prod_{s\ne i} (x^i -x^s)$ is positive implies then that 
$\mu_i < x^i < \mu_{i+1}$ for every $i\in\{1,...,n\}$; moreover, if the curvature is positive then $x^n<\mu_n+1$.

		In the case of negative curvature, 	  the situation is more complicates. In the case, there are $n+1$ roots covered with their multiplicity, 
		which we denote by $\mu_1\le \mu_2\le ... \mu_{n+1}$, from which  at least $n-1$ are simple so at most one ``$\le$'' in the last inequality is not ``$<$''.
		Then, assuming the metric is positive definite,  and using that the diagonal components of the metric are positive, one sees that 
		there exists $j \in \{1,...,n +1 \}$   such that 			all $\mu_{i-1}<x_\alpha^i<\mu_{i}$ for $i<j  $  and $\mu_{i+1}<x_\alpha^i\mu_{i+2}$ for $i \ge j$. In these  inequalities we think that $\mu_0= -\infty$ and $\mu_{n +2}= +\infty$.  }

%%%%%%%%%%%%%%%%%%%%%%%%%%%%%%%%%%
\section{Proof of Theorem  \ref{thm:main}}  \label{sec:proof:main}
%%%%%%%%%%%%%%%%%%%%%%%%%%%%%%%%%%

 \subsection{Equations  corresponding to separating  coordinates} \label{sec:2.1}

 We assume that  
\begin{equation} \label{eq:diagonalmetric} g= \sum_i \varepsilon_i \exp(g_i) \  (\dd x^i)^2,\end{equation}
where $g_i$ are functions of $x^1,...,x^n$ and
 $\varepsilon_i \in \{1, -1\}$.    
Our goal is to find all functions $g_i$ such that the metric $g$ has constant curvature and $(x^1,...,x^n)$  are separating coordinates, and to show that they (up to the freedom described in Theorem  \ref{thm:main}) are as in \eqref{eq:hatg}. Note that the first statement of  Theorem  \ref{thm:main}, namely that 
\eqref{eq:hatg}  has constant curvature, follows from the existence of (generalised) flat coordinates, see Remark \ref{rem:new}. 

Since the  metric has constant curvature,  for every $i\ne j \ne k\ne i$  we have 
$R^k_{\ jik}=0$, which is equivalent to
\begin{equation} \label{eq:riem1231}{\frac{\partial^{2}}{\partial x^{j}\partial x^{i}}{g_k}}{}+
\tfrac{1}{2} \left( \left(\frac{\partial}{\partial x^{i}}{g_k}\right)\left(\frac{\partial}{\partial x^{j}}{g_k} \right) -
\left(\frac{\partial}{\partial x^{i}}{g_j} \right) \left(\frac{\partial}{\partial x^{j}}{g_k} \right)-{\left(\frac{\partial}{\partial x^{i}}{g_k} \right) \left(\frac{\partial}{\partial x^{j}}{g_i} \right)}\right)=0. 
\end{equation}

Next, we will use the condition that  $x^1,...,x^n$ are separating coordinates. Recall that 
 by \cite{LC1904} (more recent references are e.g. \cite{eisenhart,Kress-book} or \cite[Lemma 2.2]{benenti1}), the coordinates $x^1,...,x^n$ are separating for $g$ given by \eqref{eq:diagonalmetric}
 if and  only if the `Hamiltonian' function $H=\tfrac{1}{2} \sum_{i} \varepsilon_i\exp(-g_i) p_i^2$ satisfies    
\begin{equation}\label{eq:LC}
\frac{\partial H}{\partial p_i} \frac{\partial H}{\partial p_j} \frac{\partial^2 H}{\partial x^i\partial x^j} - 
\frac{\partial H}{\partial x^ i} \frac{\partial H}{\partial p_j} \frac{\partial^2 H}{\partial p_i\partial x^j} -
\frac{\partial H}{\partial x^j} \frac{\partial H}{\partial p_i} \frac{\partial^2 H}{\partial p_j\partial x^i}=0\quad
\mbox{for all $i\ne j$}.
\end{equation} 
Condition \eqref{eq:LC} is polynomial in $p$ of   degree  $4$.    Vanishing of the coefficient   at 
$p_k^2 p_i p_j$ with   $i\ne j \ne k \ne i$,   gives the following equation (known in the literature, see e.g. \cite[Lemma 2.2]{benenti1} or \cite[Corollary 1]{Kalnins-book}):
\begin{equation}  \label{eq:1}
\frac{\partial^{2}}{\partial x^{i}\partial x^{j}}{g_k}- \left( \left(\frac{\partial}{\partial x^{i}}{g_k}\right)\left(\frac{\partial}{\partial x^{j}}{g_k} \right) -
\left(\frac{\partial}{\partial x^{i}}{g_j} \right) \left(\frac{\partial}{\partial x^{j}}{g_k} \right)-{\left(\frac{\partial}{\partial x^{i}}{g_k} \right) \left(\frac{\partial}{\partial x^{j}}{g_i} \right)}\right)=0.
\end{equation}

Comparing  \eqref{eq:riem1231} with \eqref{eq:1}, we see that for $i\ne j \ne k \ne i$ the following two conditions should hold:
\begin{eqnarray}
0 &=& \frac{\partial^{2}}{\partial x^{i}\partial x^{j}}{g_k} \label{eq:2}  \\
0 & =&  \left(\frac{\partial}{\partial x^{j}}{g_i} \right)
 \left(\frac{\partial}{\partial x^i}{g_k} \right)+\left(\frac{\partial}{\partial x^{j}}{g_k} \right) \left(\frac{\partial}{\partial x^{i}} {g_j}\right)  -\left(\frac{\partial}{\partial x^{j}}{g_k} \right) \left(\frac{\partial}{\partial x^{i}}{g_k} \right). \label{eq:3}  
\end{eqnarray} 
 
Next, vanishing of   the coefficient of \eqref{eq:LC} at  $p_j^3 p_i$ (with $i\ne j$) gives
\begin{equation}\label{eq:4}
\frac{\partial^{2}}{\partial x^{i}\partial x^{j}}{g_j}=- \left(\frac{\partial}{\partial x^{j}}{g_i} \right) \left(\frac{\partial}{\partial x^{i}}{g_j} \right).
\end{equation}

In our next step, we use  equation  (\ref{eq:4}) to construct one more second order equation on the functions $g_i$.
 Take   $i\ne j$ and  assume first 
$\frac{\partial}{\partial x^{i}}{g_j} \ne 0$.  Then,  
make the (nonrestrictive)  ansatz  
\begin{equation} 
\label{eq:ansatz}  \frac{\partial^{2}}{{\partial x^{i}}^2}{g_j} = 
-\left(\frac{\partial}{\partial x^{i}}{g_j} \right)^{2}+u_{ij}(x) \left(\frac{\partial}{\partial x^{i}}{g_j} \right)
\end{equation} 
for a certain function  $u_{ij}(x)$.  
Next, differentiate  \eqref{eq:ansatz}
with respect to $x^j$ and subtract   the derivative of \eqref{eq:4} with respect to $x^i$. The left 
hand
side becomes zero,  and after substituting the second derivatives of $g_i$ and $g_j$ given by  \eqref{eq:4} and \eqref{eq:ansatz} into the right hand side, we get 
$$
0=-\left(\frac{\partial}{\partial x^{j}}u_{ij} \left(x\right) \right) \left(\frac{\partial}{\partial x^{i}}\mathit{g_j} \right). 
$$ 
By our assumption $\frac{\partial}{\partial x^{i}}{g_j} \ne 0$, so the function $u_{ij}$ does not depend on the coordinate  $x^j$. 

Next, consider the case $\frac{\partial}{\partial x^{i}}{g_j}\equiv  0$. In this case, \eqref{eq:ansatz} is trivially fulfilled with an arbitrary function $u_{ij}$.   Thus,  we may and will assume that  the function $u_{ij}$   depends   on   $x^i$ only. 
That is,   
\begin{equation} 
\label{eq:5bis}  
\frac{\partial^{2}}{\partial {x^{i}}^{2}}{g_j} = 
-\left(\frac{\partial}{\partial x^{i}}{g_j} \right)^{2}+u_{ij}(x^i) \left(\frac{\partial}{\partial x^{i}}{g_j} \right)
\end{equation} 
for  certain functions  $u_{ij}(x^i)$.

 \subsection{Partial nonstrict order on the set of indices} \label{sec:order}  

We consider the metric \eqref{eq:diagonalmetric} such that the diagonal components are analytic 
 and   \eqref{eq:2}, \eqref{eq:3},  \eqref{eq:4}  are fulfilled.  Recall that they are fulfilled if the metric has constant curvature and the coordinates are separating. Next, by \cite{kruglikov} the Killing tensors for real-analytic metrics are real-analytic which implies that we may assume without loss of generality that the separating coordinates are also analytic.

 Let us define a relation $\preccurlyeq$ on the set $\{1,...,n\}$ of indices:  
$$
i \preccurlyeq j  \  \textrm{if and only if }  i=j \ \textrm{or} \ g_j  \ \textrm{depends on $x^i$} . 
$$

We assume that the functions $g_i$ are analytic in  the coordinates $x^j$, so the notion  {\it 
 depends on $x^i$} is well defined.

\begin{Lemma} The relation ``$\preccurlyeq$''
is a nonstrict partial order, that is,
\begin{equation}\label{def:order} 
\textrm{  if $i \preccurlyeq j $ and $j \preccurlyeq k$ then $i\preccurlyeq k$.} \end{equation} 
Moreover, for every $k$, the relation  ``$\preccurlyeq$'' restricted to  the set $\operatorname{Less}_{k}  := \{ i \mid i \preccurlyeq k\}$ is a 
nonstrict order, that is, for every $i,j\in \operatorname{Less}_k$ we have  $j\preccurlyeq i$ or $i \preccurlyeq   j$ (or both). 
\end{Lemma}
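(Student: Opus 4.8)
The plan is to establish the two assertions—transitivity of $\preccurlyeq$, and linearity of $\preccurlyeq$ on each set $\operatorname{Less}_k$—by exploiting the PDE constraints \eqref{eq:2}, \eqref{eq:3}, \eqref{eq:4} together with the assumed analyticity of the $g_i$. Reflexivity is built into the definition, so only the implications have content.

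First I would prove transitivity. Suppose $i\preccurlyeq j$ and $j\preccurlyeq k$ with $i,j,k$ pairwise distinct (the cases where two indices coincide are immediate from the definition). By hypothesis $g_j$ depends on $x^i$ and $g_k$ depends on $x^j$. Equation \eqref{eq:2} tells us that $\tfrac{\partial^2}{\partial x^i\partial x^j}g_k=0$ for $i\ne j\ne k\ne i$, so $\tfrac{\partial}{\partial x^j}g_k$ does not depend on $x^i$. Meanwhile equation \eqref{eq:3} reads
$$
\Bigl(\tfrac{\partial}{\partial x^j}g_i\Bigr)\Bigl(\tfrac{\partial}{\partial x^i}g_k\Bigr)+\Bigl(\tfrac{\partial}{\partial x^j}g_k\Bigr)\Bigl(\tfrac{\partial}{\partial x^i}g_j\Bigr)=\Bigl(\tfrac{\partial}{\partial x^j}g_k\Bigr)\Bigl(\tfrac{\partial}{\partial x^i}g_k\Bigr),
$$
and I would differentiate this relation with respect to $x^i$ (or argue directly), using $\partial_i\partial_j g_k=0$ to move the factor $\partial_j g_k$ out, to extract information forcing $\partial_i g_k$ to be nonzero. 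More precisely: if $i\not\preccurlyeq k$, i.e. $\partial_i g_k\equiv 0$, then the right-hand side of \eqref{eq:3} vanishes, so $\bigl(\partial_j g_i\bigr)\bigl(\partial_i g_k\bigr)+\bigl(\partial_j g_k\bigr)\bigl(\partial_i g_j\bigr)=0$; the first term is $0$, so $\bigl(\partial_j g_k\bigr)\bigl(\partial_i g_j\bigr)=0$. Since $g_k$ depends on $x^j$ and $g_j$ depends on $x^i$, on an open set both factors are nonzero (here analyticity is used: a nonzero analytic function vanishes only on a nowhere-dense set, so the product of two such functions is nonzero on an open dense set), a contradiction. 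Hence $\partial_i g_k\not\equiv 0$, i.e. $i\preccurlyeq k$.

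Next I would prove linearity of $\preccurlyeq$ on $\operatorname{Less}_k$. Take $i,j\in\operatorname{Less}_k$, both distinct from $k$ and from each other (degenerate cases again trivial), and suppose for contradiction that $i\not\preccurlyeq j$ and $j\not\preccurlyeq i$, i.e. $\partial_i g_j\equiv 0$ and $\partial_j g_i\equiv 0$. Since $i,j\in\operatorname{Less}_k$ we have $\partial_i g_k\not\equiv 0$ and $\partial_j g_k\not\equiv 0$. Plugging $\partial_i g_j=0$ and $\partial_j g_i=0$ into \eqref{eq:3} leaves
$$
0=-\Bigl(\tfrac{\partial}{\partial x^j}g_k\Bigr)\Bigl(\tfrac{\partial}{\partial x^i}g_k\Bigr),
$$
so the product $\bigl(\partial_j g_k\bigr)\bigl(\partial_i g_k\bigr)$ vanishes identically, again contradicting analyticity since each factor is a nonzero analytic function and hence their product is nonzero on an open dense set. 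Therefore at least one of $i\preccurlyeq j$, $j\preccurlyeq i$ holds.

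I expect the main subtlety—though it is minor—to be the careful bookkeeping of the ``depends on $x^i$'' relation in terms of ``the partial derivative is not identically zero,'' and the repeated appeal to analyticity to convert ``this function is not identically zero'' into ``this function is nonzero on an open set,'' which is what makes a product of two nonvanishing analytic functions again not identically zero. One should also double-check that \eqref{eq:2} is what licenses pulling $\partial_j g_k$ out of $x^i$-differentiation in the transitivity argument, should the direct substitution route above need supplementing; but the substitution route, as written, already closes both claims without differentiation. The rest is routine case analysis on coincidences among $i,j,k$.
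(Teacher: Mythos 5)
Your proof is correct and follows essentially the same route as the paper: in both cases one plugs the assumed vanishing derivatives into \eqref{eq:3} and derives a contradiction from the surviving product term, the only (welcome) addition being your explicit appeal to analyticity to justify that a product of two not-identically-zero factors cannot vanish identically. The digression about differentiating \eqref{eq:3} is unnecessary, as you yourself note.
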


\begin{proof} If two of the three indices $i,j,k$ are equal, there is nothing to prove. We assume that they are all different so that \eqref{eq:3} holds. 
By the definition of $\preccurlyeq$,  we have 
$\tfrac{\partial g_j}{\partial x^i}\ne 0\, , \tfrac{\partial g_k}{\partial x^j}\ne 0$. 
Assuming, by contradiction, that  
$\tfrac{\partial g_{k}}{\partial x^i}=0$ and plugging it in  
\eqref{eq:3}, we obtain 
$$
0  =  \left(\frac{\partial}{\partial x^{j}}{g_i} \right)
 \underbrace{\left(\frac{\partial}{\partial x^i}{g_k} \right)}_{=0}+\underbrace{\left(\frac{\partial}{\partial x^{j}}{g_k} \right)}_{\ne 0} \underbrace{\left(\frac{\partial}{\partial x^{i}} {g_j}\right)}_{\ne 0}  -\left(\frac{\partial}{\partial x^{j}}{g_k} \right) \underbrace{\left(\frac{\partial}{\partial x^{i}}{g_k} \right)}_{=0},
$$ 
which is impossible.  Hence,  $\tfrac{\partial g_{k}}{\partial x^i}\ne 0$, i.e. $i\preccurlyeq k$, as stated. 
Similarly, assuming  $i, j\in \operatorname{Less}_k$ we see that  $i \not\preccurlyeq j$ and $ j\not\preccurlyeq  i$ lead to a contradiction in view of     
$$
0  = \underbrace{ \left(\frac{\partial}{\partial x^{j}}{g_i} \right)}_{=0}
 \left(\frac{\partial}{\partial x^i}{g_k} \right)+\left(\frac{\partial}{\partial x^{j}}{g_k} \right) \underbrace{\left(\frac{\partial}{\partial x^{i}} {g_j}\right)}_{= 0}  -\underbrace{\left(\frac{\partial}{\partial x^{j}}{g_k} \right)}_{\ne 0} \underbrace{\left(\frac{\partial}{\partial x^{i}}{g_k} \right)}_{\ne 0}.
$$
\end{proof}

Next, with the help of relation $\preccurlyeq$ we define the relation $\approx$ as follows:
\begin{equation}\label{def:approx}
i\approx j   \ \Longleftrightarrow \ \bigl( \ i\preccurlyeq j \ \textrm{and}   \ j\preccurlyeq i \bigr).
\end{equation}
It is clearly an equivalence relation. Reflexivity $i \approx i$  is fulfilled because $i\preccurlyeq i$ by the definition of $\preccurlyeq$. Symmetry is clear, since the definition \eqref{def:approx} is symmetric with respect to $i\longleftrightarrow j$. Transitivity follows from  \eqref{def:order}.   

We now consider equivalence classes with respect to the equivalence relation $\approx$. 
The nonstrict partial order $\preccurlyeq$ on $\{1,...,n\}$ defines a strict partial order $\prec$ on the set of equivalence classes by the standard procedure:
$$
[i] \prec [j] \ \Longleftrightarrow \ \left( \  i \preccurlyeq j \ \textrm{and } \ [i] \ne  [j] \ \right).
$$
The  partial order is well defined and in particular does not depend on the choice of the elements 
 of the equivalence classes $[i]$ and $[j]$. Moreover, for every $[k]$ the set 
$\operatorname{Less}_{[k]}:=   \{ [i] \mid [i] \prec [k]\}$ is totally ordered.

Let $B$ be  the number of equivalence classes. We denote different equivalent classes by different  numbers 
  $\alpha \in \{1,...,B\}$.  The order $\prec$ gives us a partial  order on the set $\{1,...,B\}$. 

We will think without loss of generality that the coordinates are numerated in such a way that the first $n_1$ indices 
form the first equivalence class, the next $n_2$ indices form the second and so on. In other words, we assume that the coordinate system 
is 
$$
(x^1,...,x^n)= \left(\underbrace{x^1_1,...x_1^{n_1}}_{X_1},...,\underbrace{x^1_\alpha,...x_\alpha^{n_\alpha}}_{X_\alpha},..., \underbrace{x^1_B,...x_B^{n_B}}_{X_B}\right). 
$$
The blocks $X_\alpha$'s  correspond to equivalence classes of indices, 
so that for every $\alpha \in \{1,..., B\}$ and 
 two indices $i\ne  j$ with $i,j \le n_\alpha$  we have 
$\tfrac{\partial g^\alpha_i}{\partial x_\alpha^j} \ne 0 $ and $\tfrac{\partial g^\alpha_j}{\partial x_\alpha^i} \ne 0 $. For every $\alpha\ne \beta$ and two 
indices $i\le n_\alpha$ and $j\le  n_\beta$ we have
$\tfrac{\partial g^\beta_j}{\partial x_\alpha^i} \ne 0 $ if and only if $\alpha \prec \beta$.  Here by $g^\alpha_j$ we  denote the function 
$g_j$ corresponding to the coordinate $x_\alpha^j$.

 	Next, 
	let us recall that partial orders such that for every $\alpha$ the subset $\operatorname{Less}_\alpha$ is totally ordered  are closely related to in-directed rooted forests. 
	More precisely, for such a partial order on a finite set $\{1, ..., B\}$ one can canonically 
	construct   an in-directed rooted forest whose vertices are elements $\alpha\in \{1, ..., B\}$. Namely, we connect two vertices $ \alpha \ne \beta$ by  an oriented edge $\vec {\beta \alpha} $ if and only if $\alpha \prec \beta$ and  there is no $\gamma$ such that $\alpha\prec\gamma  \prec \beta$. Conversely, an in-directed rooted forest defines a natural partial order on the set of vertices:
  vertices   $\alpha$ and $\beta$  satisfy $\alpha \prec \beta$ if and only if there exists an oriented  
way from $\beta$ to $\alpha$. These two constructions are mutually inverse.
 
	\subsection{Proof of Theorem \ref{thm:main} for flat metrics 
	under the  additional  assumption that  $\mathsf F$  is a chain}
	Recall that  for an 
 in-directed forest $\mathsf F$ 	 a {\it chain} is an oriented way from a leaf to a root. Equivalently, in terms of the strict partial order $\prec$, {\it chain} is a maximal totally ordered subset.  We say that  $\mathsf F$  is a {\it chain}, if it contains only one leaf and therefore only one chain. Therefore, 
 the relation  $\preccurlyeq$ is defined for any two indices $i\ne j$ (i.e., we always have $i\preccurlyeq j$ or/and  $j \preccurlyeq i$. 
In other words, for any $i\ne j$ we have $\frac{\partial}{\partial x^{j}}{g_i} \ne 0$, or $\frac{\partial}{\partial x^{i}}{g_j} \ne 0$, or both.

In this section we  assume that the metric $g$ is flat, has the form \eqref{eq:diagonalmetric} in separating coordinates $x^1,...,x^n$, and the in-directed forest $\mathsf F$  constructed by the metric in \S \ref{sec:order} is a  chain.   
Without loss of generality we think that the 
coordinates $(x^1,...,x^n)$ are organized into blocks  
$$
(x^1,...,x^n)= \left(\underbrace{x^1_1,...x_1^{n_1}}_{X_1},...,\underbrace{x^1_\alpha,...x_\alpha^{n_\alpha}}_{X_\alpha},..., \underbrace{x^1_B,...x_B^{n_B}}_{X_B}\right). 
$$
such that the order $\prec$ on $\{1,...,B\}$ is induced by  the canonical order $<$ on the set of natural numbers.

Let us now show that in this setup the functions  $u_{ij}(x^i)$ from 
\eqref{eq:5bis} are very special: namely $u_{ij}=u_{ik}$  for every $j\ne k$ (we assume that $\frac{\partial}{\partial x^{i}}{g_j} \ne 0$  and $\frac{\partial}{\partial x^{i}}{g_k} \ne 0$ because otherwise the corresponding 
function does not come into the equation \eqref{eq:5bis}).

To that end, we differentiate \eqref{eq:3}  with respect to $x^i$ and substitute the second derivatives of $g_{j}$ and $g_k$ given by (\ref{eq:2}, \ref{eq:5bis}). The result can be  simplified using 
\eqref{eq:3}  and we obtain 
$$ 
\left(\frac{\partial}{\partial x^{i}}{g_j} \right) \left(\frac{\partial}{\partial x^{j}}{g_k} \right) (u_{ij}- u_{ik}) =0.
$$ 
 Since  $\left(\frac{\partial}{\partial x^{i}}{g_j} \right)\ne 0$ and $ \left(\frac{\partial}{\partial x^{j}}{g_k} \right)\ne 0$,
 we obtain $u_{ij}=u_{ik}$, as claimed. Similarly, by replacing $j \longleftrightarrow k$,  we obtain that  $ \left(\frac{\partial}{\partial x^{j}}{g_k} \right)\ne 0$ implies $u_{ik}= u_{ij}$. 
It remains to note that by our assumptions, since 
$\mathsf F$   is a chain, one of the conditions $ \left(\frac{\partial}{\partial x^{j}}{g_k} \right)\ne 0$ and $ \left(\frac{\partial}{\partial x^{k}}{g_j} \right)\ne 0$ must hold and we are done. 

Finally, in our setup, the following equation should be fulfilled  for all $i\ne j   \ne  k\ne i$ and for certain functions $u_i(x^i)$ 
 (denoted previously by $u_{ij}(x^i)$):
 \begin{equation} \label{eq:5}   \frac{\partial^{2}}{\partial {x^{i}}^2}{g_j} = 
-\left(\frac{\partial}{\partial x^{i}}{g_j} \right)^{2}+u_{i}(x^i) \left(\frac{\partial}{\partial x^{i}}{g_j} \right).\end{equation}

Next, observe that  the equations  \eqref{eq:2}, \eqref{eq:3},  \eqref{eq:4}, \eqref{eq:5} were  the starting point of the proof of   
\cite[Theorems 4 and 5]{NijenhuisAppl3}. The  equations in \cite{NijenhuisAppl3}  came from other assumptions than those in the present paper, but still they are precisely the same equations. 
Namely,  \eqref{eq:2} is \cite[(51)]{NijenhuisAppl3}, \eqref{eq:3} is \cite[(64)]{NijenhuisAppl3}, \eqref{eq:4}
 is \cite[(54)]{NijenhuisAppl3}, \eqref{eq:5} is \cite[(57)]{NijenhuisAppl3}. 
In \cite{NijenhuisAppl3} it was additionally  assumed that $g$ is flat, i.e., has zero sectional curvature.

Under the following assumptions,  
 it was shown that  $g$ 
is given by \eqref{eq:hatg} as in Theorem \ref{thm:main}:
\begin{enumerate}
\item The metric $g$ is diagonal as in \eqref{eq:diagonalmetric}  and has zero sectional curvature.
\item Equations \eqref{eq:2}, \eqref{eq:3},  \eqref{eq:4}, \eqref{eq:5} are fulfilled.
\end{enumerate}

    Let us point out and explain the main steps of the proof from   
\cite{NijenhuisAppl3}. Using solely the equations (\ref{eq:2},\ref{eq:4},\ref{eq:5}), in \cite[\S 6.2]{NijenhuisAppl3} it was shown that the metric is given by \cite[(62)]{NijenhuisAppl3} which is 
$$
g_{ii}= \varepsilon_i e^{g_i} = h_i(x^i) \left(\prod_{s\ne i} \left( C_{is} U_i(x^i)+ C_{si} U_s(x^s)+ E_{is}\right)^{\alpha_{is}}  \right)
$$
for certain functions $U_i(x^i)$, $h_i(x^i)$ 
 and constants $\alpha_{ij}\in \{0,1\}$, $C_{ij}$ and $ E_{ij}$. 

Next, heavily using \eqref{eq:3},  it was shown in \cite[\S6.3 and \S 7.1]{NijenhuisAppl3}  that by a change of variables one can achieve 
$E_{ij}=0$ and make 
 the matrix $C_{ij}$  such that it comes from an in-directed  forest by a procedure described in  \cite[\S 4.1 and \S 7.1]{NijenhuisAppl3}
(in \cite[\S 4.1]{NijenhuisAppl3} it is explained how to construct the matrix $c_{ij}$ from the in-directed forest and in \cite[\S 7.1]{NijenhuisAppl3} it is explained how the matrix $C_{ij}$ is related to $c_{ij}$).   
Finally, in   \cite[end of \S 7.1 and \S 7.2]{NijenhuisAppl3},  it is explained that the condition that the curvature is zero implies that the ``blocks''  $g_\alpha$  of the metric  
and the ``warping coefficients'' 
$\sigma_\alpha$  are as  Theorem \ref{thm:main}, which completes  the proof of Theorem \ref{thm:main} under the additional assumption that the curvature of $g$  is zero and that the corresponding in-directed forest is a chain.

	\subsection{Proof of Theorem \ref{thm:main} for flat metrics}

We take our  in-directed forest $\mathsf F$ constructed as in \S \ref{sec:order} and choose a chain in  it.  Let this  chain 
have $B'$   vertices   and assume,
without loss of generality,  that they correspond to the first $B'$ 
blocks, that is, $1 \prec 2 \prec \cdots \prec B'$ and for any $\alpha\in \{1,...,B'\}$ and 
  $\beta \in \{B'+1,B'+2,...,B\}$ we have $ \beta \not \prec \alpha $.

	 By construction, the components 
	$g_1,g_2,....,g_{n_1+n_2+...+n_{B'}}$ do not  depend on the coordinates from the blocks $X_{B'+1},...,X_B$. 
	
	Next, consider the $n_1+n_2+...+n_{B'}$-affine subspace corresponding to the first $n_1+n_2+...+n_{B'}$ coordinates  (i.e., set the remaining coordinates to be constant). The restriction of the metric $g$ to this affine subspace  is diagonal  and its diagonal components are the first 
	$n_1+n_2+...+n_{B'}$ components of $g$. Since they do not depend on the remaining coordinates, the affine subspace is a totally geodesic submanifold.  Then, it has constant curvature. Now, for a Killing tensor its    restriction   to a totally geodesic submanifold is  a Killing tensor. If the Killing tensor   is diagonal in coordinates $x^1,...,x^n$ (with different eigenvalues of $K^i_j$), then the restriction is also diagonal with different eigenvalues. Then,   the restriction of the metric satisfies  the assumptions of   the previous section, in particular the restriction of the metric to the affine subspace  is constructed by the data (chain, admissible labels) as described in \S \ref{sec:2}. Then, the metric $g$ is  blockdiagonal  with blocks 
	of the form $f_\alpha P_\alpha(L) g_\alpha^{\LC}$ and the coefficients of $P_\alpha$ satisfy the conditions (ii), (iii) of \S \ref{sec:2}.

If the graph  $\mathsf F$  consists of more than one connected component, then by the construction the metric is the direct product of the metrics corresponding to the components. This implies that the curvature of the metric must be zero implying the condition (i) of \S \ref{sec:2}.  Theorem \ref{thm:main} is proved for flat metrics. 

The case of constant nonzero curvature will be reduced to the case of curvature zero in the next section.

\subsection{Proof of Theorem \ref{thm:main} for metrics of constant  nonzero curvature} 

Assume that $g$ has constant nonzero curvature $K$ and  $(x^1,...,x^n)$ are separating coordinates for it, in particular $g=\operatorname{diag}(g_{11},...,g_{nn})$.  Our goal is to show that $g$ corresponds to the statement  of Theorem \ref{thm:main}.
Without loss of generality we may and will assume that $K=1$,  this can always be achieved by multiplying 
the metric by a  constant.  

Our plan is to reduce  the problem to the already solved flat case. In order to do it, consider the 
cone over our manifolds;  that is, consider  $ \wt M=  \mathbb{R}_{>0} \times  M^n$  (the coordinate on $\mathbb{R}_{>0}$ will be denoted by $x^0$)  
equipped with the metric 
\begin{equation} \label{eq:hatG}  
\wt g= (\dd x^0)^2 + (x^0)^2 g.
\end{equation} 

It is well known, see e.g. \cite{fedorova, monoud},   that the metric is flat and that 
 the  Levi-Civita connection $\wt\nabla=\{\wt\Gamma^{\ti}_{\tj \tk}\}$
  corresponding to metric $\wt g$ on $\wt M$ is given  by:
  \begin{gather}
    \wt\Gamma^0_{0 0}=0, \ 
    \wt\Gamma^i_{0 0}=0,\\
    \wt\Gamma^0_{j 0}=\wt\Gamma^0_{0 k}=0,\\
    \wt\Gamma^i_{j 0}=\frac{1}{x^0}\,\delta^i_j,\qquad \wt\Gamma^i_{0
      k}=\frac{1}{x^0}\,\delta^i_k,\\
    \wt\Gamma^0_{j k}={}-x^0\cdot g_{j k}(x),\\
    \wt\Gamma^i_{j k}=\Gamma^i_{j k}(x),
  \end{gather}
  where $\Gamma^i_{j k}$ are Christoffel symbols of the Levi-Civita
  connection of $g$, 
	and indices $i,j,k$ ran  from $1$ to $n$.

 Let $K_{ij}$ be a Killing tensor for $g$,  which is diagonal in the coordinates $x^1,...,x^n$ and
such that the eigenvalues of $K^i_j$ are all different; without loss of generality we assume that non of the eigenvalues of $K^i_j$ is zero  since one can achieve it by adding $\const \cdot g$ to $K_{ij}$.  
We consider the  $(0,2)$ tensor field  on $\wt M$   given by   $\wt K= \operatorname{diag}(0,K_{11},...,K_{nn})$. 
By direct calculations using the above formulas for    $\wt\Gamma $,
we see that    $\wt K$ is a Killing tensor for $\wt g$. Its eigenvalues are clearly different. Therefore, the coordinates $x^0,...,x^n$ are separating for $\wt g$.

Next, by \eqref{eq:hatG},   
 the  components $\wt g_{11}$,..., $\wt g_{nn}$ depend on the variable $x^0$, and the component $\wt g_{00} $ does not depend on the coordinates $x^1,...,x^n$.
This implies   that 
  the graph of the separating coordinates $x^0,...,x^n$ has only one root (so it is connected), the dimension $n_\alpha$ corresponding to the root is equal to one which implies that the root has   degree one. 
The corresponding metric is  $g_\alpha=(\dd x^0)^2$ and the only component of the corresponding $(1,1)$  tensor $L$ is $(x^0)^2$. 

 By taking out  the root and the incident edge, we obtain 
a labelled  in-directed tree whose labels satisfy conditions {\rm (ii), (iii)} of \S \ref{sec:2}. If we denote by $\tilde g$ the metric constructed by this labelled tree, then 
$\wt g= (\dd x^0)^2 + (x^0)^2 \tilde g$. Comparing this with \eqref{eq:hatG}, we see that $  g= \tilde g$ impliying that 
the metric $  g$ is constructed by this labelled in-directed tree as we claimed. Theorem \ref{thm:main} is proved.

\section{Proof of Theorems  \ref{thm:2} and \ref{thm:3} }

The proofs are  organised as follows. We first recall/explain how the metrics $g_\alpha$ given by \eqref{eq:tildeg}   and $g$ given by \eqref{eq:hatg}  are related to geodesically equivalent metrics and warp product decompositions. 
Both relations will be used in the proof. The main result of \S \ref{sec:1.3.1} is Theorem \ref{thm:L}. It is interesting on its own and provides  additional  new tools for the theory of separation of variables which  will be used in our  proofs  of Theorems \ref{thm:2} and \ref{thm:3}.

\subsection{Geodesically equivalent metrics, their relation to separation of variables, and the essential uniqueness of $L$}  \label{sec:1.3.1}

Two metrics $g$ and $ \bar g$ on $M^n $  are {\it geodesically equivalent}  if their geodesics, viewed as unparameterised curves,  coincide. Geodesically equivalent metrics is a classical topic in differential geometry and first nontrivial results were obtained by already E. Beltrami \cite{beltrami}, U. Dini \cite{dini} and T. 
Levi-Civita \cite{LC1896}. We will use  certain facts from  the theory of geodesically equivalent metrics below. Let us recall the relation between geodesically equivalent metrics and Killing tensors of the second order.

We first re-formulate the condition ``$g$ is geodesically equivalent to $\bar g$'' as a system of PDE. Consider  the following equation on  a symmetric $(0,2)$ tensor $L_{ij}$ (on  a manifold $(M,g)$)  
\begin{equation}
\label{eq:sinjukov} 
\nabla_k L_{ij}= \ell_{i} g_{jk}+ \ell_{j} g_{ik},
\end{equation} 
where $\ell_i$ is the differential of the function $\tfrac{1}{2} \sum_{j,k}g^{jk}L_{jk}$, $\ell_i = \tfrac{1}{2}
\tfrac{\partial }{\partial x^i} \sum_{j,k}g^{jk}L_{jk}$.

The equation  \eqref{eq:sinjukov} appeared independently and was used in many branches of mathematics, in particular in the theory of geodesically  equivalent metrics, see e.g. \cite{benenti, hyperbolic, lichnerowitz, iran, sinjukov}), and in the theory of integrable systems, 
see e.g. \cite{benenti0, benenti1, benenti2,  crampin, IIM, MR}. Later, e.g.  in
in \S  \ref{sec:killing}, we will use   the following statement:

\begin{Fact}[e.g., \cite{benenti}]  \label{fact:0}  Let $L=L^i_j$ be a $(1,1)$-tensor which is nondegenerate (i.e., $\det L\ne 0$),  self-adjoint with respect to $g$ and  such that $L_{ij} = g_{i\alpha}L^\alpha_j$ satisfies \eqref{eq:sinjukov}. 
Then the metric 
\begin{equation} \label{eq:bg} 
\bar g(\xi, \eta):= \tfrac{1}{\det L}\, g\left(L^{-1} (\xi), \eta\right)\end{equation}
is geodesically equivalent to $g$. Conversely,
 if two metrics $g$ and $\bar g$ are  geodesically equivalent, then the $(0,2)$-tensor  
\begin{equation} \label{eq:L}
L_{ij}:= \left(\frac{|\det(g)|}{ |\det(\bar g)| }\right)^{\tfrac{1}{n+1}} \sum_{s,r}  g_{is} \bar g^{sr} g_{rj}= \left(\frac{|\det(g)|}{ |\det(\bar g)| }\right)^{\tfrac{1}{n+1}}  g \bar g^{-1} g 
\end{equation}
satisfies \eqref{eq:sinjukov}. {\rm (}Note that  \eqref{eq:L}  is obtained from \eqref{eq:bg} by resolving it w.r.t. $L$.{\rm )}
\end{Fact}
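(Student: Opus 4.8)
## Proof proposal for Fact \ref{fact:0}

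The plan is to verify directly that \eqref{eq:sinjukov} is exactly the linearization (in the sense of a suitable change of unknowns) of the classical Levi-Civita equations characterizing geodesic equivalence, and then to read off both directions of the claim from that identification. First I would recall that two metrics $g$ and $\bar g$ on $M^n$ are geodesically equivalent if and only if their Levi-Civita connections have proportional geodesics, which by the classical computation of Levi-Civita \cite{LC1896} is equivalent to the existence of a $1$-form $\psi_i$ with
$$
\bar\Gamma^k_{ij} - \Gamma^k_{ij} = \psi_i \delta^k_j + \psi_j \delta^k_i,
\qquad \psi_i = \tfrac{1}{2}\,\partial_i \log\!\Bigl(\tfrac{|\det\bar g|}{|\det g|}\Bigr)^{1/(n+1)}.
$$
From here the standard move is to contract the relation $\bar\nabla \bar g = 0$ with this expression for $\bar\Gamma - \Gamma$, which produces a first-order PDE for $\bar g^{-1}$ (or for a weighted version of it). The key bookkeeping step is to substitute the ansatz \eqref{eq:L}, i.e. $L = \bigl(|\det g|/|\det\bar g|\bigr)^{1/(n+1)} g\bar g^{-1} g$, and check that the resulting equation becomes precisely \eqref{eq:sinjukov} with $\ell_i = \tfrac12 \partial_i \operatorname{tr}_g L$; this is the computation that makes the weight $\tfrac{1}{n+1}$ and the particular contraction $g\bar g^{-1} g$ appear naturally. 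Nondegeneracy of $L$ corresponds to nondegeneracy of $\bar g$, so in the forward direction one must only note that $\det L \ne 0$ is exactly what allows the relation $L = \bigl(|\det g|/|\det\bar g|\bigr)^{1/(n+1)} g\bar g^{-1} g$ to be inverted to give formula \eqref{eq:bg}, namely $\bar g(\xi,\eta) = \tfrac{1}{\det L}\, g(L^{-1}\xi,\eta)$; here one checks that $\det \bar g$ computed from \eqref{eq:bg} is consistent with the weight, which pins down the scalar factor and shows there is no ambiguity.

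For the converse direction I would simply run the same identity backwards: given $L$ self-adjoint, nondegenerate, and solving \eqref{eq:sinjukov}, define $\bar g$ by \eqref{eq:bg}; self-adjointness of $L$ guarantees $\bar g$ is symmetric, and nondegeneracy guarantees it is a genuine (pseudo-)metric of the same signature type up to sign. Then unwind the substitution to recover the Levi-Civita form $\bar\Gamma^k_{ij} - \Gamma^k_{ij} = \psi_i\delta^k_j + \psi_j\delta^k_i$ with $\psi_i$ expressed through $\ell_i$ and $\operatorname{tr}_g L$; concretely one differentiates \eqref{eq:bg}, uses \eqref{eq:sinjukov} to replace $\nabla L$, and collects terms to see that the correction to the Christoffel symbols is pure trace in the required way. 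Since proportionality of connections in this pure-trace form is equivalent to coincidence of unparameterized geodesics, this gives geodesic equivalence of $g$ and $\bar g$.

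The only genuinely delicate point, and the one I would treat carefully rather than wave through, is the scalar normalization: one must verify that with $\bar g$ defined by \eqref{eq:bg} one has $\bigl(|\det g|/|\det\bar g|\bigr)^{1/(n+1)} g\bar g^{-1}g = L$ identically (not merely up to a function), which forces checking $\det\bar g = (\det L)^{n-1}/(\det g)^{?}\cdot(\text{sign})$ against \eqref{eq:bg} — a short determinant computation — and, in the indefinite-signature setting, keeping track of the absolute values and signs so that the $(n+1)$-st root is unambiguous. Everything else is a routine, if slightly tedious, Christoffel-symbol manipulation; the conceptual content is entirely in recognizing \eqref{eq:sinjukov} as the Levi-Civita equation rewritten in the variable $L$, and I would present it by stating the Levi-Civita form first and then exhibiting the change of variables that converts it into \eqref{eq:sinjukov}. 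Since this is classical (see \cite{benenti, hyperbolic, sinjukov}), in the paper itself it suffices to cite the computation and indicate these two substitutions; I would not reproduce the full index calculation.
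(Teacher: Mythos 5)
The paper offers no proof of Fact \ref{fact:0}: it is quoted as a classical statement with a pointer to \cite{benenti}, so there is no in-paper argument to measure yours against. Your outline --- Levi-Civita's characterisation $\bar\Gamma^k_{ij}-\Gamma^k_{ij}=\psi_i\delta^k_j+\psi_j\delta^k_i$ with $\psi$ exact, followed by the change of unknown from $\bar g$ to the weighted tensor $L$, which linearises $\bar\nabla\bar g=0$ into \eqref{eq:sinjukov} --- is precisely the standard argument in the cited literature, and as a strategy it is correct in both directions.

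The one computation you explicitly defer is also the one whose tentative form you have written down incorrectly, and it is worth doing because it exposes a real issue. From \eqref{eq:bg}, i.e.\ the matrix identity $\bar g=\tfrac{1}{\det L}\,gL^{-1}$, one gets $\det\bar g=\det g\,(\det L)^{-(n+1)}$, not $(\det L)^{n-1}$ over a power of $\det g$; this exponent is exactly what makes the $(n+1)$-st root in the weight collapse to $|\det L|^{\pm1}$. Carrying the substitution through, $g\bar g^{-1}g=\det L\cdot gL$ and $\bigl(|\det\bar g|/|\det g|\bigr)^{1/(n+1)}=|\det L|^{-1}$, so the weight that returns $L_{ij}$ (up to sign) is $\bigl(|\det\bar g|/|\det g|\bigr)^{1/(n+1)}$ --- the reciprocal of the ratio printed in \eqref{eq:L}. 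The same reciprocal is forced by the PDE itself: writing $\phi_k=\tfrac{1}{2(n+1)}\partial_k\log|\det\bar g/\det g|$ one finds $\nabla_k\bar g_{ij}=2\phi_k\bar g_{ij}+\phi_i\bar g_{kj}+\phi_j\bar g_{ik}$, and it is the tensor $e^{2\phi}\,g\bar g^{-1}g$ with $e^{2\phi}=|\det\bar g/\det g|^{1/(n+1)}$ whose covariant derivative takes the form $\ell_ig_{jk}+\ell_jg_{ik}$. Since \eqref{eq:sinjukov} is not invariant under multiplication of $L$ by a non-constant function, this is not a harmless normalisation; your verification step, done honestly, is exactly the check that catches it. Everything else in your sketch (symmetry of $\bar g$ from self-adjointness of $L$, nondegeneracy, the sign bookkeeping in indefinite signature) is fine and matches how \cite{benenti} sets it up.
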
 

 Since  the addition of $\const\cdot \Id $ to $L=L^i_j$ does not affect  equation \eqref{eq:sinjukov}, we may assume that the  symmetric tensor $L_{ij}$  is nondegenerate.  This motivates the following definition, see e.g. \cite{splitting}: we say that a $(1,1)$-tensor $L^i_j$ is {\it geodesically compatible} with $g$, 
if it is $g$-self-adjoint and $L_{ij} = \sum_s g_{is}L^s_j$ satisfies \eqref{eq:sinjukov}.

Next, let us recall the relation of geodesically equivalent metrics  to separation of variables.

\begin{Fact} \label{fact:1} Suppose $L=L^i_j$ is geodesically compatible with $g$. Consider the  family of $(1,1)$-tensors,    polynomially  depending on  $t$,   given by    
\begin{equation}\label{eq:S1} 
S_t =  \left(t \cdot \Id_{n}-L \right)^{-1} \det\left( t\cdot \Id_{n}- L\right). 
\end{equation}
Then for any $t$, the symmetric $(0,2)$-tensor  $K_t( \xi, \eta) = g\bigl(S_t(\xi), \eta\bigr)$ is a Killing tensor for $g$. 
\end{Fact}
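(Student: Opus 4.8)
The plan is to recognise $S_t=\det(t\cdot\Id_n-L)\,(t\cdot\Id_n-L)^{-1}=\operatorname{adj}(t\cdot\Id_n-L)$ as the adjugate matrix of $t\cdot\Id_n-L$, so that $S_t$ is a polynomial of degree $n-1$ in $t$ whose coefficients are polynomials in $L$. In particular each coefficient is $g$-self-adjoint, so $K_t(\xi,\eta)=g(S_t\xi,\eta)$ is a well-defined symmetric $(0,2)$-tensor for every $t$, and it is enough to verify the Killing equation $\nabla_{(k}(K_t)_{ij)}=0$ for each fixed $t$ (equivalently, for each coefficient separately, the equation being linear and polynomial in $t$). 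I would do the verification at the level of the generating function $S_t$, the only input being the Sinjukov equation \eqref{eq:sinjukov} for $L_{ij}$, which after raising one index reads $\nabla_k L^i_j=\ell^i g_{kj}+\ell_j\delta^i_k$; equivalently, $\nabla_k L_{ij}$ is symmetric in all three indices up to the metric factors.

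The core of the computation is to differentiate the defining identity $(t\cdot\Id_n-L)\,S_t=\det(t\cdot\Id_n-L)\cdot\Id_n$ covariantly. By Jacobi's formula $\nabla_m\det(t\cdot\Id_n-L)=-\operatorname{tr}(S_t\,\nabla_m L)$, and substituting $\nabla_m L^i_j=\ell^i g_{mj}+\ell_j\delta^i_m$ and using the $g$-self-adjointness of $S_t$ collapses the right-hand side to $-2\,(S_t\ell^\sharp)_m$, where $\ell^\sharp$ is the vector field $g$-dual to $\ell$. Differentiating the identity then gives a closed formula for $(t\cdot\Id_n-L)\,\nabla_m S_t$ in terms of $S_t$, $\nabla_m L$ (hence $\ell$), and $\det(t\cdot\Id_n-L)$; solving for $\nabla_m S_t$ (working with polynomials in $t$, so that the invertibility of $t\cdot\Id_n-L$ is irrelevant), lowering an index, and taking the cyclic sum over $(m,i,j)$, one finds that everything cancels. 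The cancellations are forced by just two structural facts: $L$ and $S_t$ are $g$-self-adjoint and commute, and $\nabla_m L_{ij}=\ell_i g_{mj}+\ell_j g_{mi}$ is itself totally symmetric up to the metric. This yields $\nabla_{(m}(K_t)_{ij)}=0$, that is, $K_t$ is a Killing tensor for $g$.

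The one genuinely tedious point is the bookkeeping in this cyclic-sum cancellation; I would keep it compact by never expanding $S_t=\sum_{k=0}^{n-1}t^{\,n-1-k}B_k$ into its individual Benenti coefficients $B_k$, but manipulating the generating identity directly. As an alternative route --- which is how the statement is classically obtained, cf.\ \cite{benenti} --- one can avoid the computation: for every $t$ that is not an eigenvalue of $L$, the $(1,1)$-tensor $t\cdot\Id_n-L$ is $g$-self-adjoint, non-degenerate, and still geodesically compatible with $g$ (adding a constant multiple of $\Id_n$ does not affect \eqref{eq:sinjukov}), so Fact \ref{fact:0} applies and shows that $\bar g_t(\xi,\eta):=\det(t\cdot\Id_n-L)^{-1}\,g\!\left((t\cdot\Id_n-L)^{-1}\xi,\eta\right)$ is geodesically equivalent to $g$; the assertion that the corresponding $S_t$ produces a Killing tensor for $g$ is then precisely the Matveev--Topalov/Benenti theorem relating geodesically equivalent metrics to quadratic integrals of the geodesic flow, and the finitely many excluded values of $t$ cause no trouble because $K_t$ depends polynomially on $t$.
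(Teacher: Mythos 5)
Your proposal is correct, but there is nothing in the paper to compare it against: Fact \ref{fact:1} is stated as a known result and attributed to \cite{MT,TM} (with special cases going back to Painlev\'e and Levi-Civita), and no proof is given. Your second route --- replace $L$ by $t\cdot\Id_n-L$, which is still geodesically compatible, apply Fact \ref{fact:0} to obtain the geodesically equivalent metric $\bar g_t$, and invoke the Topalov--Matveev correspondence between geodesic equivalence and quadratic integrals --- is essentially the paper's own attribution restated, so it adds nothing beyond the citation. Your first route, however, is a genuine self-contained proof and it does go through. Writing $S_t=\operatorname{adj}(t\cdot\Id_n-L)$, Jacobi's formula combined with \eqref{eq:sinjukov} in the form $\nabla_m L^a_b=\ell^a g_{mb}+\ell_b\delta^a_m$ and the $g$-self-adjointness of $S_t$ gives $\nabla_m\det(t\cdot\Id_n-L)=-2\sigma_m$ with $\sigma_m:=(S_t)^k_m\ell_k$; differentiating $(t\cdot\Id_n-L)S_t=\det(t\cdot\Id_n-L)\cdot\Id_n$ and multiplying on the left by $S_t$ then yields, without ever inverting $t\cdot\Id_n-L$,
\begin{equation*}
\det(t\cdot\Id_n-L)\,\nabla_m (K_t)_{ij}\;=\;\sigma_i\,(S_t)_{mj}+\sigma_j\,(S_t)_{im}-2\,\sigma_m\,(S_t)_{ij},
\end{equation*}
whose symmetrization in $(m,i,j)$ vanishes identically because $(S_t)_{ij}=g_{ik}(S_t)^k_j$ is symmetric. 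Since at each point the left-hand side is a polynomial in $t$ vanishing for all $t$ outside the finite spectrum of $L$ at that point, it vanishes for all $t$, exactly as you indicate. The only thing I would ask you to make explicit in a final write-up is the displayed identity above: ``everything cancels'' is precisely the step a reader will want to see, and once the three terms are written down the cancellation is a one-line check.
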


Note that by \cite{benenti}, geodesic compatibility with $g$ implies that the 
Nijenhuis  torsion of  $L=L^i_j$ vanishes.    In particular, if $L$ has $n$ different eigenvalues, a generic Killing tensor from the family $K_t$ satisfies the assumption in our definition of separation of variables. 

In the framework of geodesic equivalence, Fact \ref{fact:1} is due to \cite{MT,TM}; special cases of Fact \ref{fact:1}  were known already to P. Painlev\'e  \cite{Painleve1897} and T. Levi-Civita \cite{LC1896}. In the theory of integrable systems, Fact \ref{fact:1} (under additional nondegeneracy conditions) was discussed in e.g.  \cite{crampin, MR}.

The relation   between geodesically equivalent metrics,  Killing tensors  and separation of variables allows the  methods and results of one topic to be used when studying another. Killing tensors and corresponding  integrals were used in particular in \cite{hyperbolic, MT, lichnerowitz, TM} as additional tools to handle global behaviour of geodesically equivalent metrics. In the present paper, we use this relationship in the other direction:  we will apply the results and methods of the theory of geodesically equivalent metrics in the theory of separation of variables.

Let us   show that  geodesic compatibility naturally come in our description of separable coordinates: we show the existence of an (essentially unique) tensor  $L$ which is compatible with the  metric  \eqref{eq:hatg} and  diagonal in the coordinates $x^1,...,x^n$. First, we consider the case when $\mathsf F$ contains one vertex only; in this case, the metric is given by \eqref{eq:tildeg}.

\begin{Fact} \label{fact:2} Consider the metric $g=g_\alpha$ and the tensor $L= L_\alpha$ given by  \eqref{eq:tildeg}. Then, the tensor $L$ is geodesically compatible   with   $g$. 
\end{Fact}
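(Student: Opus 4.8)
The plan is to verify directly that $L_\alpha = \operatorname{diag}(x_\alpha^1,\dots,x_\alpha^{n_\alpha})$ is $g_\alpha$-self-adjoint and that $L_{ij} = (g_\alpha)_{is}(L_\alpha)^s_j$ satisfies the Sinjukov equation \eqref{eq:sinjukov}. Self-adjointness is immediate: both $g_\alpha$ and $L_\alpha$ are diagonal in the coordinates $X_\alpha$, so $L_{ij}$ is symmetric. The substance is the Sinjukov identity, and the cleanest route is to exhibit a metric $\bar g$ that is geodesically equivalent to $g_\alpha$ and from which $L_\alpha$ is recovered by \eqref{eq:L}; then Fact \ref{fact:0} gives the claim. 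The natural candidate is the \emph{flat} metric: for the one-block metric $g_\alpha^{\LC}$ with $L_\alpha=\operatorname{diag}(x_\alpha^1,\dots,x_\alpha^{n_\alpha})$, the coordinates $x_\alpha^1,\dots,x_\alpha^{n_\alpha}$ are (generalised) ellipsoidal-type coordinates, and $g_\alpha^{\LC}$ together with its ``dual'' obtained by the St\"ackel/Benenti construction form a geodesically equivalent pair. Since this is exactly the classical Benenti situation, one may instead simply cite \cite{benenti} — which is what the statement ``e.g., \cite{benenti}'' signals — but a self-contained argument is short enough to include.

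First I would drop the block index and write $g = g^{\LC} = \sum_{s} \rho_s \,(\partial_{x^s})^2$ with $\rho_s = \bigl(\prod_{j\ne s}(x^s - x^j)\bigr)^{-1}$, so that the covariant metric is $g_{ss} = \rho_s^{-1} = \prod_{j\ne s}(x^s - x^j)$, and $L = \operatorname{diag}(x^1,\dots,x^n)$, hence $L_{ss} = x^s g_{ss}$ (no sum), $L_{ij}=0$ for $i\ne j$. The function $\tfrac12 \sum_{j,k} g^{jk}L_{jk} = \tfrac12\sum_s x^s$, so $\ell_i = \tfrac12$ for every $i$. The Sinjukov equation \eqref{eq:sinjukov} then reads, componentwise, $\nabla_k L_{ij} = \tfrac12(g_{jk}+g_{ik})$, i.e. the off-diagonal components $\nabla_k L_{ij}$ ($i\ne j$, both $\ne k$) must vanish, the components with exactly two indices equal must equal $\tfrac12 g_{kk}$ (in the appropriate combinations), and the fully diagonal component $\nabla_k L_{kk}$ must equal $g_{kk}$. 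Writing these out using the Christoffel symbols of the diagonal metric $g^{\LC}$ — for which $\Gamma^s_{ss} = \tfrac12\partial_{x^s}\log g_{ss}$, $\Gamma^s_{sj} = \tfrac12\partial_{x^j}\log g_{ss}$, $\Gamma^j_{ss} = -\tfrac12 g^{jj}\partial_{x^j}g_{ss}$, and all others zero — reduces each required identity to an elementary fact about the logarithmic derivatives of $\prod_{j\ne s}(x^s-x^j)$, namely $\partial_{x^j}\log g_{ss} = \tfrac{1}{x^s - x^j}$ for $j\ne s$ and $\partial_{x^s}\log g_{ss} = \sum_{j\ne s}\tfrac{1}{x^s - x^j}$. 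These are exactly the partial-fraction identities that make the Vandermonde-type products collapse.

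The only mildly delicate point — and the one I expect to be the main bookkeeping obstacle — is organizing the three index cases of \eqref{eq:sinjukov} and checking that the terms $L_{sj}\Gamma^s_{\cdot\cdot}$ that appear (since $L$ is not covariantly constant) combine correctly; in particular the case of the component $\nabla_k L_{ij}$ with $i,j,k$ all distinct, where one gets a sum of three Christoffel contributions that must cancel. Here the identity $\tfrac{x^i}{x^i-x^k} + \tfrac{x^k}{x^k-x^i} = 1$ and its relatives do the job. Alternatively — and this is the route I would actually take to keep the proof short — I would invoke Fact \ref{fact:0} with $\bar g$ the metric geodesically equivalent to $g^{\LC}$ that is standard in the Benenti/St\"ackel theory (explicitly, in the coordinates $x^s$ the metric $\bar g$ is again diagonal, with $\bar g^{ss} = (x^s)\,\rho_s$ up to the determinant normalization), verify the algebraic relation \eqref{eq:L} between $g^{\LC}$, $\bar g$ and $L$ by a one-line diagonal-matrix computation, and let Fact \ref{fact:0} deliver \eqref{eq:sinjukov} for free. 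Either way, the conclusion is that $L_\alpha$ is geodesically compatible with $g_\alpha$, as asserted; and as remarked after Fact \ref{fact:1}, this also re-proves that $L_\alpha$ is Nijenhuis, consistent with \eqref{eq:tildeg}.
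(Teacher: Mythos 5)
Your preferred route --- exhibiting the classical geodesically equivalent partner $\bar g$ (Levi-Civita/Benenti) and recovering $L=\operatorname{diag}(x^1,\dots,x^n)$ from it via \eqref{eq:L}, so that Fact \ref{fact:0} delivers \eqref{eq:sinjukov} --- is exactly the paper's argument, which cites \cite{LC1896} and notes that \eqref{eq:tildeg} is the special case $f_i(x^i)=x^i$ (up to a harmless overall sign and a coordinate change) of Levi-Civita's normal form for geodesically equivalent pairs. Your backup direct verification of \eqref{eq:sinjukov} is also sound in outline; just note the sign $\partial_{x^j}\log g_{ss} = \tfrac{1}{x^j-x^s}$ for $j\ne s$ (not $\tfrac{1}{x^s-x^j}$), which matters when the three Christoffel contributions are supposed to cancel in the all-distinct-indices case.
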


This fact follows  directly   from \cite{LC1896}, where  Levi-Civita has proved that (for any functions $f_i(x^i)$  such that the formula below defines nondegenerate metrics) the metrics $g$ and $\bar g$ given by  
\begin{eqnarray*}
g &=& \sum_{i} \left(\prod_{s \ne i} \left( f_s(x^s)  -f_i(x^i)\right)\right) (dx^i)^2 \\
\bar g &=& \sum_{i} \left( \prod_{s \ne i}\left( \frac{f_s(x^s)  -f_i(x^i)}{f_i(x^i) \prod_{r}f_r(x^r)}  \right)\right) (dx^i)^2
\end{eqnarray*}
are geodesically equivalent.  We see that, for these metrics, 
 $L^i_j$ given by \eqref{eq:L} is $\textrm{diag}(f_1(x^1),...,f_n(x^n))$. 
The formulas  for $g$ and $L$  obtained  by  the 
 coordinate change  $x^i_{\rm{new}}= f_i(x^i)$  include our formulas  \eqref{eq:tildeg} for  $g_\alpha$, $L_\alpha$ as a special case. 

Now, let us describe  the tensor $L$ satisfying \eqref{eq:sinjukov} for the metric  $g$   given by \eqref{eq:hatg} with an arbitrary in-directed forest 
$\mathsf F$. We start with the case when  $\mathsf F$ is  connected. 

Let $\alpha$ be the root of $\mathsf F$ and assume that $\alpha$ has precisely $m$ incoming edges $\vec{\gamma_1\alpha}, ... ,  \vec{\gamma_m\alpha}.$  Next, we consider the following $(1,1)$-tensor $L$. It is blockdiagonal, the 
 $\alpha$-block is $L_\alpha$. For every $i$, the block corresponding to $\gamma_i$ is $\lambda_{\gamma_i} \Id_{n_{\gamma_i}}$. Next, for every $\beta$ with $\gamma_i\prec \beta$,  the block corresponding to $\beta$ is $\lambda_{\gamma_i} \Id_{n_{\beta}}$.

\begin{Fact} \label{fact:3} The $(1,1)$-tensor $L$ constructed above is geodesically compatible with  $g$.  Moreover, for any constants $A$ and $C$, the  $(1,1)$-tensor $\tilde L= AL + C\cdot \Id$ is  geodesically compatible with  $g$.
\end{Fact}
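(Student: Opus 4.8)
The plan is to verify equation \eqref{eq:sinjukov} for the block-diagonal tensor $L$ directly, exploiting the block structure of both $g$ and $L$ and reducing everything to the one-block statement (Fact \ref{fact:2}) plus the behaviour of the warping functions $f_\alpha$. First I would observe that, since \eqref{eq:sinjukov} is affine in $L$ (the left-hand side is linear in $L$ and the covector $\ell$ depends linearly on $L$), the second assertion — that $\tilde L = AL + C\cdot\Id$ is again geodesically compatible — follows immediately from the first: $\Id$ trivially satisfies \eqref{eq:sinjukov} with $\ell_i=0$, and the equation is preserved under linear combinations. So the whole content is the claim about $L$ itself.

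For the main assertion I would proceed block by block. Fix the decomposition $(X_1,\dots,X_B)$ into coordinate blocks and recall that $g=\operatorname{diag}(f_1\tilde g_1,\dots,f_B\tilde g_B)$ where $\tilde g_\alpha = P_\alpha(L_\alpha)g^{\LC}_\alpha$, that $L$ restricts to $L_\alpha$ on the root block $\alpha$ and to a multiple of the identity $\lambda_{\gamma_i}\Id$ on each block $\beta$ with $\gamma_i \stackrel{\prec}{=}\beta$, and that $f_\alpha$ depends only on the coordinates of blocks $s$ with $s\stackrel{\prec}{=}\alpha$. The function being differentiated in \eqref{eq:sinjukov} is $\tfrac12\operatorname{tr}_g L = \tfrac12\sum_\alpha \operatorname{tr}(\text{$\alpha$-block of }L)$; on the root block this contributes $\tfrac12\operatorname{tr}L_\alpha = \tfrac12\sum_s x_\alpha^s$, and on the other blocks it contributes constants, so $\ell$ is supported on the root block and there equals the $\ell$-covector attached to $L_\alpha$ by Fact \ref{fact:2}. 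I would then split the verification of $\nabla_k L_{ij} = \ell_i g_{jk} + \ell_j g_{ik}$ into cases according to which blocks the indices $i,j,k$ lie in, using the known form of the Christoffel symbols of a metric of the warped/diagonal type \eqref{eq:hatg}. On the root block alone the identity is exactly Fact \ref{fact:2} (Levi-Civita's computation), possibly rescaled by the constant structure of $f_\alpha$ there, which is $1$ since the root has $f_\alpha=1$. On a non-root block $\beta$ the relevant piece of $L$ is the constant multiple $\lambda\,\Id$, so $\nabla_k L_{ij}$ reduces to $\lambda(\nabla_k g_{ij}) = 0$; hence one needs $\ell_i g_{jk}+\ell_j g_{ik}=0$ for such indices, which holds because $\ell$ vanishes outside the root block and these indices are not root indices.

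The cross-block terms are where the genuine work lies, and I expect the Christoffel symbols with one index in the root block and the others elsewhere to be the main obstacle: one must check that the mixed components $\nabla_k L_{ij}$ (with, say, $k$ in the root block, $i,j$ in a descendant block) are produced exactly by $\ell_k g_{ij}$, i.e. that $\partial_{x^k}(\lambda\, g_{ij}) - (\text{Christoffel corrections}) = \ell_k g_{ij}$. This is precisely the point at which the explicit shape of $f_\alpha$ in \eqref{eq:falpha} — the product of factors $\det(\lambda_s\Id - L_{\operatorname{next}(s)})^{-1}$ — enters, since differentiating $f_\alpha$ with respect to a root-block coordinate produces exactly the term matching $\ell_k$. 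An alternative, possibly cleaner route to the same conclusion is to observe that $g$ arises (via the cone construction and the block/warped-product structure used in \S\ref{sec:proof:main}) by iterated warped products and Levi-Civita substitutions from the one-block metrics, and that geodesic compatibility of $L_\alpha$ with $\tilde g_\alpha$ is preserved under these operations when $L$ is extended on the added directions by the appropriate constants $\lambda$; this reduces the computation to Fact \ref{fact:2} together with a lemma on how \eqref{eq:sinjukov} behaves under warped products, which I would state and prove separately. Either way, once all index cases are checked the Fact follows, and the $\tilde L = AL + C\Id$ statement is then immediate from linearity as noted above.
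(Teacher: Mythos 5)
Your proposal is correct in substance but follows a genuinely different route from the paper: the paper disposes of Fact \ref{fact:3} in one line by citing Levi-Civita's 1896 classification of geodesically equivalent metrics, of which the pair $(g,\bar g)$ built from this $L$ via \eqref{eq:bg} is an instance, whereas you propose a direct verification of \eqref{eq:sinjukov}. Your reduction of the second assertion to linearity of \eqref{eq:sinjukov} in $L$ (with $\Id$ parallel and traceless contribution to $\ell$) is exactly right and is also how the paper treats it. For the main assertion, your case analysis is sound, and you correctly locate the only nontrivial case: for $k,j$ in a descendant block and $i$ in the root block one gets $\nabla_k L_{ij}=\bigl(\tfrac{\partial_i w}{w}\bigr)(x^i-\lambda_\gamma)\,g_{kj}$ with $w^2=\det(\lambda_\gamma\Id-L_\alpha)$ the root-dependent part of the covariant warping factor, and $\partial_i\log w=\tfrac{1}{2(x^i-\lambda_\gamma)}$ turns this into $\tfrac12 g_{kj}=\ell_i g_{kj}$, as required; the remaining cross cases vanish on both sides because $L$ equals the \emph{same} constant $\lambda_\gamma$ on all blocks below $\gamma$ (so $L=\lambda_\gamma g$ there) and $\ell$ is supported on the root block. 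One small caveat: your appeal to Fact \ref{fact:2} for the root block needs the slightly stronger form of Levi-Civita's statement allowing the extra one-variable factors $1/P_\alpha(x^i)$ in the diagonal entries (i.e.\ compatibility of $L_\alpha$ with $P_\alpha(L_\alpha)g^{\LC}_\alpha$, not just with $g^{\LC}_\alpha$); this is covered by the reparametrisation remark following Fact \ref{fact:2} in the paper, but should be said explicitly. The trade-off between the two approaches is the usual one: the citation is shorter but opaque, while your computation is self-contained, makes visible exactly why the warping functions \eqref{eq:falpha} must have the determinant form they do, and is essentially the ``splitting--gluing'' lemma for \eqref{eq:sinjukov} under warped products that you mention as an alternative.
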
 
This  fact follows  directly   from \cite{LC1896}: certain examples of geodesically equivalent metrics in this paper will give such $L$.

Let us now comment on the case when  $\mathsf F$   contains $m\ge 2$ connected components $\mathsf F_1,..., \mathsf F_m$. The formula \eqref{eq:hatg} immediately implies that  the  
 metric \eqref{eq:hatg}  is the direct product of metrics corresponding to its connected components. Indeed, if two vertices $\alpha\ne \beta$ satisfy neither 
$\alpha\prec \beta$ nor $\beta \prec \alpha$, then the components of $ g_\alpha$ do not depend on the coordinates  $X_\beta$ and the components of $ g_\beta$ do not depend on the coordinates  $X_\alpha$.  
Let us consider the $(1,1)$-tensor  $L$    corresponding to this direct product. That is, the tensor $L$  is block-diagonal with 
 $m$ blocks corresponding to the connected components of $\mathsf F$. Moreover,   
 the block  corresponding to the $i$th connected component of $\mathsf F$ is $C_i \Id$ with $C_i\in \mathbb{R}$ (we see  that the space of such operators 
$L$ is $m$-dimensional, in contrast to the  2-dimensional space  from Fact \ref{fact:3}).

\begin{Fact} \label{fact:4} The $(1,1)$-tensor $L$ constructed above is geodesically  compatible with  $g$. 
\end{Fact}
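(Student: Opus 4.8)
The plan is to deduce Fact \ref{fact:4} directly from the already established Facts \ref{fact:0} and \ref{fact:3} together with the observation that both the metric $g$ and the tensor $L$ decompose as direct products along the connected components $\mathsf F_1,\dots,\mathsf F_m$ of $\mathsf F$. First I would make precise the direct-product structure: if we group the coordinates $(x^1,\dots,x^n)$ into $m$ groups $Y_1,\dots,Y_m$, where $Y_i$ collects the coordinate blocks $X_\alpha$ with $\alpha$ a vertex of $\mathsf F_i$, then formula \eqref{eq:hatg} together with the fact (noted just above the statement) that $g_\alpha$ does not depend on any $X_\beta$ with $\beta$ in a different component shows that $g = g^{(1)} \oplus \dots \oplus g^{(m)}$, where $g^{(i)}$ is a metric depending only on the coordinates $Y_i$ and is exactly the metric \eqref{eq:hatg} associated with the (connected) sub-forest $\mathsf F_i$. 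The tensor $L$ under consideration is block-diagonal with $i$-th block $C_i\,\Id$, i.e.\ $L = C_1\,\Id^{(1)} \oplus \dots \oplus C_m\,\Id^{(m)}$.

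Next I would invoke the behaviour of equation \eqref{eq:sinjukov} under direct products. The key point is that all the operations entering \eqref{eq:sinjukov} — the Levi-Civita connection $\nabla$, raising and lowering indices, and the trace $\tfrac12 g^{jk}L_{jk}$ — respect the direct-product decomposition: for a product metric $g = \bigoplus_i g^{(i)}$ the Christoffel symbols with indices from different factors vanish, and $\nabla$ restricted to each factor is the Levi-Civita connection $\nabla^{(i)}$ of $g^{(i)}$. Consequently, a block-diagonal symmetric $(0,2)$-tensor $L = \bigoplus_i L^{(i)}$ (with $L^{(i)}$ depending only on $Y_i$) satisfies \eqref{eq:sinjukov} for $g$ if and only if each $L^{(i)}$ satisfies \eqref{eq:sinjukov} for $g^{(i)}$ — here one also checks that the mixed covariant derivatives $\nabla_k L_{ij}$ with $k$ in factor $i'\ne i$ vanish because $L^{(i)}_{ij}$ does not depend on $Y_{i'}$ and the relevant Christoffel symbols vanish, while the right-hand side $\ell_i g_{jk} + \ell_j g_{ik}$ also vanishes in such mixed cases because $g_{jk}$ is block-diagonal and $\ell_i$, being the differential of the sum of the per-component traces, splits accordingly.

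It then remains to verify \eqref{eq:sinjukov} for each block $L^{(i)} = C_i\,\Id^{(i)}$ with respect to $g^{(i)}$. This is immediate: for a constant multiple of the identity, $\nabla^{(i)}_k (C_i\, g^{(i)}_{ij}) = 0$ since $\nabla^{(i)} g^{(i)} = 0$, and the associated one-form $\ell_i$ is the differential of the constant $\tfrac12 C_i\, g^{(i)jk} g^{(i)}_{jk} = \tfrac12 C_i n_i$, hence $\ell_i = 0$; so both sides of \eqref{eq:sinjukov} vanish. Equivalently, and perhaps more in the spirit of the paper, one may simply note that within each connected component $\mathsf F_i$ the tensor $C_i\,\Id^{(i)}$ is a special case (take $A=0$, $C=C_i$) of the tensor $\tilde L = AL + C\,\Id$ shown to be geodesically compatible with $g^{(i)}$ in Fact \ref{fact:3}. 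Combining this with the direct-product compatibility of \eqref{eq:sinjukov} established in the previous step yields the claim: $L$ is geodesically compatible with $g$.

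I do not expect a serious obstacle here; the statement is essentially bookkeeping. The only point requiring a little care is the direct-product reduction of equation \eqref{eq:sinjukov} — specifically confirming that the ``mixed'' components of both sides of \eqref{eq:sinjukov} vanish and that the definition of $\ell_i$ is compatible with the splitting. One should also make explicit that $L$ is $g$-self-adjoint (obvious, being block-diagonal with scalar blocks and $g$ block-diagonal) and nondegenerate provided all $C_i \ne 0$; as remarked after Fact \ref{fact:0}, adding a constant multiple of $\Id$ does not affect \eqref{eq:sinjukov}, so nondegeneracy may be assumed without loss of generality. With these remarks in place the proof of Fact \ref{fact:4} is complete.
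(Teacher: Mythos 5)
Your proof is correct and follows essentially the same route as the paper: the paper's argument is precisely that $L$ is, by construction, $g$-self-adjoint and parallel with respect to $\nabla^g$ (so that $\nabla_k L_{ij}=0$ and $\ell=\tfrac12\,\dd\operatorname{tr}L=0$, making both sides of \eqref{eq:sinjukov} vanish), which is the core of your verification. Your extra bookkeeping on the direct-product reduction of \eqref{eq:sinjukov} is a harmless elaboration of the same one-line observation.
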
 
Indeed, by construction it is self-adjoint and parallel with respect to $\nabla^g$ and \eqref{eq:sinjukov} is fulfilled.

\begin{Theorem} \label{thm:L}     
Assume a  self-adjoint $(1,1)$-tensor $\tilde L$ is geodesically compatible with
 $g$ given by \eqref{eq:hatg} and is diagonal in the coordinates  $x^1,...,x^n$.

Then, if the graph $\mathsf F$ is connected,  	
	then $\tilde L$   is given by $\tilde L = A L + C \cdot \Id$ for some $A, C\in \mathbb{R}$ and $L$ as in Fact \ref{fact:3}.

If 		$\mathsf F$   contains $m\ge 2$ connected components, then  $\tilde L = L$, where $L$   is as in Fact \ref{fact:4}.
			\end{Theorem}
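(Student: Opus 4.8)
The plan is to analyze the structure equation \eqref{eq:sinjukov} for a diagonal self-adjoint $(1,1)$-tensor $\tilde L = \operatorname{diag}(\ell_1(x),\dots,\ell_n(x))$ written in the separating coordinates $x^1,\dots,x^n$, and extract from it the precise list of constraints forcing $\tilde L$ to have the claimed shape. First I would write out \eqref{eq:sinjukov} componentwise. For a diagonal metric $g = \sum_i \varepsilon_i e^{g_i}(\dd x^i)^2$ and diagonal $L_{ij} = \varepsilon_i e^{g_i}\ell_i\,\delta_{ij}$, the off-diagonal components of \eqref{eq:sinjukov} split into two families: the equation with three distinct indices $i\ne j\ne k$, which reads (after dividing by the metric factor) $\partial_k \ell_i = 0$ whenever $\partial_k g_i = 0$ — more precisely it forces $(\ell_i - \ell_j)\,\partial_k g_i = 0$ and $(\ell_i - \ell_k)\,\partial_j g_i = 0$ type relations — and the equation with two indices, which expresses $\ell_i$ in terms of the $g_i$ and the gradient function $\ell_k = \tfrac12\partial_k(\sum g^{jk}L_{jk})$. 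The first family, combined with the description of the partial order $\preccurlyeq$ from \S\ref{sec:order} (namely $\partial_{x^i} g_j \ne 0$ exactly when $i \preccurlyeq j$ within the block structure), is what pins down on which blocks $\tilde L$ must be a multiple of the identity.

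The key steps, in order, are: (1) Use the three-index component of \eqref{eq:sinjukov} together with the nonvanishing relations $\partial_{x_\alpha^j}g^\alpha_i \ne 0$ for $i\ne j$ inside a block (established in \S\ref{sec:order}) to show that on each block $X_\alpha$ the diagonal entries $\ell_i$, $i\in X_\alpha$, cannot all be forced equal unless the block is "above" the root in a way that couples it to a lower block; in fact one shows that within a single block $X_\alpha$ the function $\ell_i$ must be a nonconstant function of $x_\alpha^i$ alone (matching $L_\alpha = \operatorname{diag}(x_\alpha^1,\dots)$ up to an affine reparametrization on that block), OR $\tilde L|_{X_\alpha}$ is a scalar. (2) Use the cross-block three-index relations: if $\alpha \prec \beta$ then $\partial_{x_\alpha^i}g^\beta_j \ne 0$, and plugging this into \eqref{eq:sinjukov} with one index in $X_\alpha$, one in $X_\beta$, and a third index, forces $\tilde L$ restricted to $X_\beta$ to be constant along $X_\beta$ and in fact equal to a scalar determined by the label $\lambda_\gamma$ on the edge leaving the component of $\beta$ toward $\alpha$ — this is exactly the structure in Fact \ref{fact:3}. (3) Identify which block carries the "nontrivial" $L_\alpha$: since the metric \eqref{eq:hatg} is built with $L_\alpha = \operatorname{diag}(x_\alpha^1,\dots,x_\alpha^{n_\alpha})$ only and the polynomials $P_\alpha$ couple consecutive blocks via condition (ii), the compatibility equation propagated down the forest shows the nonscalar part can sit on at most one block per chain, and the connectedness/root analysis forces it to be the root block (after using condition (i) when $\mathsf F$ is disconnected, which kills the nonscalar part entirely and gives $\tilde L = L$ as in Fact \ref{fact:4}). (4) Finally, having located $\tilde L = $ (scalar$\cdot L_\alpha$ on root block, constants $\lambda_\gamma$ elsewhere), the freedom $A L + C\cdot\Id$ is exactly the two-parameter family allowed because adding $C\cdot\Id$ and rescaling preserve \eqref{eq:sinjukov}; the two-index component of \eqref{eq:sinjukov} then checks that no further freedom survives, using that $\det L$ and $\bar g$ are tied to $g$ via Fact \ref{fact:0} and the explicit form of $g$.

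The main obstacle, I expect, is step (2)–(3): controlling how the scalar multiples $\lambda_\gamma$ attached to non-root blocks are forced to be consistent across the forest, and in particular ruling out "exotic" diagonal geodesically compatible tensors that are scalar on each block with unrelated scalars on incomparable blocks — except precisely in the disconnected case, where Fact \ref{fact:4} shows these ARE allowed (the $m$-dimensional family). So the delicate point is the bookkeeping that distinguishes the connected case (2-parameter family $AL+C\Id$) from the disconnected one ($m$-parameter family of block scalars), and this hinges on carefully tracking which three-index instances of \eqref{eq:sinjukov} are nontrivial — i.e., on the combinatorics of $\preccurlyeq$ restricted to $\operatorname{Less}_k$ being a total order (the Lemma in \S\ref{sec:order}), which guarantees that within any chain the coupling equations force a single affine parameter, while across distinct components there is no coupling equation at all. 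I would also invoke the real-analyticity of Killing tensors (hence of $L$, via Fact \ref{fact:0}) so that "depends on $x^i$" is unambiguous and the vanishing of a derivative on an open set propagates globally.
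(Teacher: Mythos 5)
Your overall strategy --- writing the geodesic compatibility condition componentwise in the separating coordinates and exploiting the block structure of \eqref{eq:hatg} and the order $\prec$ --- is the same as the paper's (the paper derives the component equations via the Ibort--Marmo--Magri bracket condition rather than directly from \eqref{eq:sinjukov}, but that is cosmetic). However, there is a concrete error in the mechanism you propose for steps (1)--(2). For a diagonal metric and a diagonal $L_{ij}$, the component of \eqref{eq:sinjukov} with three pairwise distinct indices $i,j,k$ is identically satisfied: $L_{ij}=0$, the Christoffel symbols $\Gamma^j_{ki}$ with three distinct indices vanish for a diagonal metric, and the right-hand side $\ell_i g_{jk}+\ell_j g_{ik}$ is zero. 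So this component yields no relations of the form $(\ell_i-\ell_j)\,\partial_{x^k} g_i=0$. All of the information sits in the components with a repeated index: $i=j\ne k$ gives $\partial_{x^k} \tilde L^i_i=0$ (each eigenvalue depends only on its own coordinate), and $k=j\ne i$ gives the \emph{inhomogeneous} relation \eqref{eq:rastelli}, namely $\frac{\partial g^{jj}}{\partial x^i}(\tilde L^i_i-\tilde L^j_j)=\frac{\partial \tilde L^i_i}{\partial x^i}\,g^{jj}$. The extra term on the right changes the logic: constancy of $\tilde L^i_i$ on non-root blocks is obtained by taking $x^j$ in the root block, so that $\partial_{x^i}g^{jj}=0$ and the equation collapses to $\partial_{x^i}\tilde L^i_i=0$; only after that does it become a product relation. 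The values $\lambda_\gamma$ on the lower blocks and the affine form $\tilde L^j_j=Ax^j+C$ with a common slope on the root block then come from substituting the explicit warping factors $f_\alpha$ of \eqref{eq:hatg} into \eqref{eq:rastelli}, which yields $\tilde L^j_j-\tilde L^i_i=(x^j-\lambda_\gamma)\,\partial_{x^j}\tilde L^j_j$. Steps (1)--(2) need to be redone along these lines; as written, the relations you plan to extract do not exist.

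A second, smaller issue is the disconnected case: you attribute the vanishing of the nonscalar part to condition (i) (the degree bound on $P_\alpha$ at roots), but that is not the operative mechanism. The paper restricts $\tilde L$ to each factor of the direct product (the restriction satisfies \eqref{eq:sinjukov} for the factor metric, hence equals $A_i L_i + C_i\,\Id$ by the connected case) and then kills each $A_i$ by inspecting the cross-component entries of \eqref{eq:sinjukov}: if $A_i\ne0$, the right-hand side acquires a nonzero term $\tilde\ell_i\, g_{jj}$ with $x^j$ in a different component, while the left-hand side vanishes there. Your remaining conclusions (the nonconstant block sits at the root, the two-parameter freedom $AL+C\cdot\Id$ in the connected case, the $m$-parameter family of block scalars in the disconnected case) are correct and agree with the paper.
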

			
Note  that in dimension  $n=1$ Theorem is wrong since  any tensor $L$ is geodesically compatible to $g$. 

The importance of the tensor $L$ in the theory of orthogonal separation of variables  was known before. It is the main ingredient of the approach of 
\cite{RM, RM2, RM3} and   was significantly used in particular in \cite{schoebel1,schoebel2}. We expect that   the essential uniqueness of this tensor will  provide additional tools in the study of separation of variables by methods of algebraic geometry, and will also improve the description of separating coordinates suggested in \cite{RM}. We will also use it in the present paper.

\subsection{Multiple warped product decomposition}  \label{sec:1.3.2.}

Consider the metric $g$ given by \eqref{eq:hatg}. Assume that the in-directed forest $\mathsf F$ is connected  and contains more than one vertex. Then $g$  has the  structure of  a warped product. 

 Indeed, for any 
vertex $\alpha$   of $\mathsf F$   which is not  the root,  
 the metric  $g$ naturally decomposes into   
the warped product such that  the  fibre  metric is the metric   
\eqref{eq:hatg} constructed by     the subgraph of  $\mathsf F$  
 spanned over  the vertex $\alpha$ and all vertices $\gamma$ with $\alpha\prec \gamma$.  We denote this metric by $\tilde g_\alpha$. 
 The labelled  in-directed tree corresponding to the base 
 metric is the part of  the in-directed tree 
 spanned over  all other  vertices. The warping function is  $f_\alpha$  defined by \eqref{eq:falpha} (or $1/f_\alpha$ if we consider covariant metrics).

\begin{Ex} \label{ex:beta2}{\rm 
For the graph $\mathsf F$ on Figure \ref{Fig:1} and  $\alpha=2$, 
the base metric is 
$
P_1(L_1) \cdot  g^{\LC}_1, 
$
the fibre metric   is     
$$
P_2(L_2) \cdot g^{\LC}_2  +  \tfrac{1}{
\det(\lambda_3 \Id_{n_2}-L_2)} \cdot P_3(L_3) \cdot g^{\LC}_3 +   
\tfrac{1}{\det(\lambda_4 \Id_{n_2}-L_2)}   P_4(L_4) \cdot g^{\LC}_4, 
$$
and the warping function is 
$
\tfrac{1}{\det(\lambda_2 \Id_{n_1}-L_1)}. 
$ 
}\end{Ex}

\begin{Ex} \label{ex:beta4} {\rm
  For the  same graph $\mathsf F$ on Figure \ref{Fig:1} and $\alpha=4$, 
the base metric is 
$  
 P_1(L_1) \cdot  g^{\LC}_1  + \tfrac{1}{
 \det(\lambda_2 \Id_{n_1}-L_1)}\cdot  P_2(L_2) \cdot g^{\LC}_2  + 
\tfrac{1}{\det(\lambda_2 \Id_{n_1}-L_1) \det(\lambda_3 \Id_{n_2}-L_2)} \cdot P_3(L_3) \cdot g^{\LC}_3,  $ 
 the fibre metric is 
 $   P_4(L_4) \cdot g^{\LC}_4, 
$
and the warping function is
$\tfrac{1}{\det(\lambda_2 \Id_{n_1}-L_1) \det(\lambda_4 \Id_{n_2}-L_2)}$. 
}\end{Ex}

Note that the base and  fibre metrics have the form \eqref{eq:hatg} and the  corresponding labelled in-directed trees are obtained from the initial in-directed tree  by deleting  the edge $\overrightarrow{\alpha\operatorname{next}(\alpha)}$.

\subsection{Proof of Theorem \ref{thm:L}} 

We take the metric $g$ given by \eqref{eq:hatg}  and 
 work in the separating coordinates $x^1,...,x^n$. We will first consider the case when $\mathsf F$ is connected. Assume   a diagonal tensor $\tilde L=\textrm{diag}(\tilde L^1_1,\tilde L^2_2,\dots ,\tilde L^n_n)$
 is geodesically compatible with $g=\textrm{diag}(g^{11},g^{22},\dots, g^{nn})$.  Our goal it to show that $\tilde L$ is as in the first  claim of  Theorem  \ref{thm:L}. 

Consider the Hamiltonian  $H= \tfrac{1}{2}\sum_{s} {p_s}^2 g^{ss}$ of the geodesic flow of  $g$ and  
the function $D:T^*M \to \mathbb{R}$ given by $D= \tfrac{1}{2} \sum_s  {p_s}^2 g^{ss} \tilde L^s_s$.   From the theory of geodesically equivalent metrics (e.g., from the Levi-Civita Theorem) it follows that the component $\tilde L^i_i$ may depend on the variable $x^i$ only and if the components  $\tilde L^i_i\equiv \tilde L^j_j$ (with  $i\ne j$) then both of them are constant. Since the equation  \eqref{eq:sinjukov} is linear in $\tilde L$, we may  assume without loss of generality 
that  the restriction of $\tilde L$ to the block corresponding to the root has simple eigenvalues which  are not constant  and 
 are different from any  eigenvalue of the restriction of $\tilde L$ to any other block. 

By \cite{benenti}, geodesic compatibility of such $\tilde L$ with $g$ is  equivalent to the  following (Ibort-Marmo-Magri) condition  
$$
\{ H, D\} = H \ \sum_{s,r} g^{sr} p_r\frac{\partial }{\partial x^s} \operatorname{trace}(\tilde L).  
$$
This condition is a homogeneous polynomial  of degree $3$ in the momenta $p_i$ whose coefficients depend  on  the  position. Equating the  $p_i^2p_j$-coefficients   and using  $\tfrac{\partial  \tilde L^i_i} {\partial x^j}=0$ for $i\ne j$ gives us  
\begin{equation} \label{eq:rastelli}  
   \frac{\partial g^{jj}}{\partial x^i} (\tilde L^i_i -\tilde L^j_j)= \frac{\partial \tilde L^i_i}{\partial x^i} g^{jj}.
\end{equation}
Recall that in this formula the components $g^{ii}$ are known and given by \eqref{eq:hatg} and  our goal is to show that $\tilde L$ is as we claim in Theorem     \ref{thm:L}. 

To this end, 
we  take the coordinate  $x^j$ from the block corresponding to the root  and $x^i$ from any other block. Then,  $\frac{\partial g^{jj}}{\partial x^i}=0$ and \eqref{eq:rastelli} implies that every  component
  $\tilde L^i_i$ such that $x^i$  is not from the block corresponding to the root is  constant.

Now, take  a vertex $\gamma$ such that $\textrm{next}(\gamma)$ is the  root 
 and  coordinates $x^i$, $x^j$   such that $x^j$ corresponds to the root block  and and $x^i$  corresponds 
 to  the  $\gamma$-block or to any $\beta$-block with $\gamma\prec \beta$.  
In this setup,   the equation \eqref{eq:rastelli} implies 
\begin{equation}\label{eq:rastelli1}
\tilde L^j_j- \tilde L^i_i= (x^j- \lambda_{\gamma})\frac{\partial \tilde L^j_j}{\partial x^j}. 
\end{equation} 
We see that     \eqref{eq:rastelli1}  determines uniquely $\tilde L^i_i$; in particular all $\tilde L^i_i$ such that   $x^i$  corresponds 
 to  the  $\gamma$-block or to any $\beta$-block with $\gamma\prec \beta$  are equal to each other (recall that  we already know that they are constants).   Moreover, \eqref{eq:rastelli1}  implies  that   $\frac{\partial \tilde L^j_j}{\partial x^j}=\const$ so    \eqref{eq:rastelli1} 
reads 
\begin{equation} \label{eq:rastelli1.5} \tilde L^j_j- \tilde L^i_i=(x^j- \lambda_{\gamma})\const \end{equation} 

Note that  the constant  in \eqref{eq:rastelli1.5}   may a  priori  be different for different
  $j$; let us show that it is not the case. 
Take $i\ne j$ such that $x^i$ and $x^j$ both belong to the root block. Equation \eqref{eq:rastelli} implies 
\begin{equation} 
\label{eq:rastelli2}  
  \tilde L^j_j- \tilde L^i_i= (x^j- x^i)\frac{\partial \tilde L^j_j}{\partial x^j} \ \ \textrm {  and  }  \ \  \tilde L^j_j- \tilde L^i_i= (x^j- x^i)\frac{\partial \tilde L^i_i}{\partial x^i}.
\end{equation}
Since $\frac{\partial \tilde L^j_j}{\partial x^j}$ depends only on $x^j$, and  $\frac{\partial \tilde L^i_i}{\partial x^i}$ depends only on $x^i$,  we have   $\frac{\partial \tilde L^j_j}{\partial x^j}=\frac{\partial \tilde L^i_i}{\partial x^i}$. Thus, 
    for every coordinate $x^i$ from the root block, we get     $\frac{\partial \tilde L^i_i}{\partial x^i}=\const$ and this constant is the same   for all coordinates 
		from the root block. By assumptions, it is  different from zero. 
Without loss of generality we can assume that  this constant is one,  in this case  $\tilde L^j_j = x^j+ \const_j$ and  
the equation \eqref{eq:rastelli1}  reads $(x^j + \const_j- \tilde L^i_i) = (x^j- \lambda_\gamma)$. Then,  the constants  $\const_j$ are the same for all 
$x^j$ from the root block, under the additional  assumption that  the number of blocks is $\ge 2$. 
We may assume then $\const_j=0$ (otherwise we replace $L$ by $\tilde L- \const_j\cdot \Id$).  Then,  in the notation of \eqref{eq:rastelli1}, $\tilde L^i_i= \lambda_\gamma$ and $\tilde L$ is precisely as we claimed in Theorem \ref{thm:L}.

It remains to consider the case  when we have the root block only; it has  dimension $\ge 2$.   We already know that $\tilde L^j_j = x^j+ \const_j$ and we need to show that all $\const_j$ are the same. The condition  \eqref{eq:rastelli2}   reads
$ (x^j- x^i + \const_j-\const_i)= (x^j-x^i)$ implying the claim. Theorem \ref{thm:L} is proved for connected $\mathsf F$. 

Now let us assume that $\mathsf F$ has $m\ge 2$ connected components. Then, $g$  is (locally isometric to) the direct product of the metrics $\tilde g_1,...,\tilde g_m$, where  $\tilde g_i$  is the metric  \eqref{eq:hatg} constructed by the $i$-th connected component. As we explain above, the components of $\tilde L^i_j$ may depend on its own coordinate only. Taking in account that the equation \eqref{eq:sinjukov} can be equivalently rewritten as 
$$
\operatorname{trace}_g\operatorname{-free}\ \operatorname{part\ of \ }\tilde L_{ij ,k} := \tilde L_{ij,k}- \tfrac{1}{2}\sum_s \left(\tilde L^{s}_{s,i} g_{jk}  +  \tilde L^{s}_{s,j} g_{ik}\right)=0, 
$$
    we obtain that the restriction of $\tilde L$  to any components of the direct product is well-defined and satisfies \eqref{eq:sinjukov} with respect to the corresponding metric. Then, it is as in the first (already proved) claim of Theorem  \ref{thm:L}, that is, the restriction of $\tilde L$ to the $i$-th component of the direct product  is given by   		$A_i \tilde L_{i} + C_i\cdot \Id,$  where $\tilde L_i$ is as in Fact \ref{fact:3}. Next,  all $A_i= 0$. Indeed, otherwise the right hand side of \eqref{eq:sinjukov} has a nonzero components of the form $\tilde \ell_i g_{jj}$ with the coordinate $x^j$ belonging to another connected component of $\mathsf  F$. The 
corresponding component of the right hand side of \eqref{eq:sinjukov} is  clearly zero, which gives us a contradiction and proves the second claim of Theorem \ref{thm:L}.

\subsection{Proof of Theorem \ref{thm:2}} \label{sec:proof2}

The proof is based on Theorem \ref{thm:L}. The idea is as follows.  The tensor $L$ satisfying \eqref{eq:sinjukov} is a geometric object. By Theorem \ref{thm:L}, it  is essentially unique. If
$\mathsf F$ is not connected, one can  reconstruct the  metrics corresponding to different connected components of $\mathsf F$ by $L$ since they correspond to different constant eigenvalues of $L$. If $\mathsf F$ is connected,     $L$  gives us 
 coordinates corresponding 
to the root of $\mathsf F$, since they are nonconstant  eigenvalues of $L$.  

The formal proof goes by induction in the dimension. The base of induction is $n=1$ and is clear. Let us assume that Theorem \ref{thm:2} is correct for  dimensions $\le n-1$ and prove it for dimension $n$. 
Suppose two sets of admissible data describe the same separating coordinates on $M^n$,
 we will indicate  objects corresponding to the second set by ``tilde'', e.g., the metric \eqref{eq:hatg}
constructed by the second set is $\tilde g$ and the coordinates are $\tilde x^i$. 
That is, there exists an     isometry  between $g$ and $\tilde g$
 of  the ``diagonal'' form $x^i = x^i\left(\tilde x^{b(i)}\right)$ where   $b:\{1,...,n\} \to \{1,...,n\}$ is a bijection. Let us consider the pullback of $\tilde L$ with respect to this  isometry. It is diagonal in coordinates $x^i$ and satisfies \eqref{eq:sinjukov}.

 If $\tilde{\mathsf F}$ is not connected, by Theorem \ref{thm:L} 
 the pullback of $\tilde L$ 
 is parallel  and has $m\ge 2$ eigenvalues $\tilde C_i$  which implies that the number of connected  components of $\mathsf F$ and 
$\tilde{\mathsf F}$ coincide and that the isometry preserves  the decomposition in the direct product.
 Clearly, the metrics of the components of the direct product 
are the metrics of the form \eqref{eq:hatg}  corresponding to the connected components of $\mathsf F$.   Thus, we made  the induction step  by  reducing  
the case to low-dimensional cases  under the assumption that  $\tilde{\mathsf F}$ is not connected. 

Now, consider the  case when $ \mathsf F$ and $\tilde{\mathsf F}$ are    connected. Then, the coordinates corresponding to the root are   the eigenvalues  of $L$ from Fact \ref{fact:3}. Since  the pullback of 
$\tilde L$ is $AL +C \cdot \Id$ by Theorem \ref{thm:L}, for 
the coordinates $ x^i$ corresponding to the root,  the coordinates $\tilde x^{b(i)}$ also correspond to the root and the coordinate transformation for such coordinates is given by $A \tilde x^{b(i)} + C = x^i$. Moreover,   
 by Fact \ref{fact:3}  the labels $\lambda_\gamma$ staying on all incoming edges to the root are  constant eigenvalues of $L$. These implies that 
the root coordinates and the labels are changed as in (B) of \S \ref{sec:1.2}.  Note also that the isometry sends  the eigenspaces of $  L$  corresponding to constant eigenvalues of $L$ to that of $\tilde L$.
 
 Next, delete the root and   all incoming edges of $\mathsf F$ and of $\tilde {\mathsf F}$.
 As explained in \S \ref{sec:1.3.2.}, each of the  connected  components 
 of  the obtained labelled in-directed forest  gives a fibre metric of the form \eqref{eq:hatg} of the corresponding warped product decomposition. This fibre metric ``lives'' on the integral submanifold of the corresponding constant eigenvalue of $L$, so diagonal isometry of the metric induces the diagonal isometry of the  fibre metrics. This reduces the situation to that already discussed,  i.e., to  diagonal isometries of metrics of the form \eqref{eq:hatg} living on a 
  manifold of smaller dimension (and such that the corresponding in-directed forest is connected), which performs the induction step  in this case and therby    proves Theorem \ref{thm:2}.

\subsection{Proof of Theorem \ref{thm:3}} \label{sec:thm3}

We will use the relation to geodesic equivalence discussed  in \S \ref{sec:1.3.1}.  Take a vertex $\alpha$ and assume it has $m$ incoming edges.  Following  \S \ref{sec:1.3.2.}, consider the corresponding warping decomposition of the metric $g$ given by \eqref{eq:hatg}.  
Combining  Facts    \ref{fact:3} and  \ref{fact:1} we obtain   a $n_\alpha+m$-dimensional space of Killing tensors for the fibre metric.  Next,
 observe that  any Killing tensor for fibre metric, viewed as (2,0)-tensor on $M$, 
 is also a Killing tensor of the whole warped product metric\weg{(cf \cite[Proposition 4.3]{MR})}.  The fact is general and is true for any warped product and for Killing tensors of any degree; a ``brute force'' proof goes through calculation of the Christoffel symbols (as functions of the Christoffel symbols 
of the base and the fibre metrics and the derivatives of the warping function, see e.g. \cite{Prvanovic}) and inserting them in the Killing equation.  In our  case  of constant curvature, a shorter proof is available: it is   based  on 
 \cite{milson} where it was shown that 
  in  a constant curvature space, any Killing tensor viewed as 
tensor with upper indices is a polynomial in Killing vectors. Clearly,  
 Killing vectors for the fibre metric    can be lifted to    Killing vectors of the whole space which also implies that Killing tensors can be lifted to the whole space.

   Thus, any vertex $\alpha$ gives us an explicit $n_\alpha+m$-dimensional space of Killing tensors of $g$.  

Finally, we observe that this   family of  Killing   tensors contains an
 $n_\alpha$-dimensional subfamily   generated by \eqref{eq:K1}. This  proves Theorem \ref{thm:3}.

\section{Proof of Theorems \ref{thm:casimirs},  \ref{thm:generalised_casimirs} and  \ref{thm:F_casimirs}}

\subsection{Proof of Theorems  \ref{thm:casimirs}  and \ref{thm:generalised_casimirs}}\label{sect:proof1.5-1.6}

To prove \ref{thm:casimirs}  and \ref{thm:generalised_casimirs}  we simply compute the components $g^{ij}$ of the metric $g=P(L)g_{\mathsf{LC}}$ in coordinates $Y^j$'s  defined by \eqref{eq:10}  and  \eqref{eq:11} respectively.  Namely,  in the case of Theorem  \ref{thm:casimirs} we will show that the $n\times n$ matrix whose $(i,j)$-element is given by $g^*(\dd Y^i, \dd Y^j)$ has constant entries and is nondegenerate. Similarly, for Theorem \ref{thm:generalised_casimirs}  we will show that $g^*(\dd Y^i, \dd Y^j) = -K Y^i Y^j + G^{ij}$ where $G^{ij}$ is constant and non-degenerate  (see Remark \ref{rem:1.2}).

Let us compute $g^* ( \dd Y^\alpha_i, \dd Y^\beta_j)$   (including the case $i=\infty$  and/or $j=\infty$).   We start with the case when $i$ and $j$ are finite. Below, we assume that $P(t) = a_{n+1} t^{n+1} + \dots$, where $a_{n+1}$ might be zero or not, which allows us to prove the both theorems simultaneously.

Since $\tfrac{\dd}{\dd t}$ and $\dd=\dd_x$ (differential in the variables $x$) commute, we have 
\begin{equation}
\label{eq:bols_1}
g^* ( \dd Y^\alpha_i, \dd Y^\beta_j) = \tfrac{1}{\alpha!} \tfrac{1}{\beta!} \tfrac{\dd^\alpha}{\dd s^\alpha}|_{s=\mu_i} \tfrac{\dd^\beta}{\dd t^\beta}|_{t=\mu_j}  g^* \bigl(\dd Y(s),  \dd Y(t)\bigr),
\end{equation}
where $Y(t) = \sqrt{\det (t\cdot\Id - L)}$.
It is easily seen that:
$$
g^* \bigl(\dd Y(s),  \dd Y(t)\bigr) = g^*\left(\dd \sqrt{\det ( s\cdot \Id - L)},  \dd\sqrt{\det ( t\cdot \Id -L)}\right)  =   
$$
$$
\frac{1}{4} Y(s)Y(t) \sum_{i=1}^n \frac{P(x_i)}{(x_i-s)(x_i-t)\prod_{j\ne i}(x_i - x_j)}
$$

If we think of $s$ and $t$ as two additional variables like $x_{n+1}, x_{n+2}$, then adding two {\it auxiliary} terms  $ \frac{P(s)}{(s-t)\prod_{j}(s - x_j)}$  and  
$\frac{P(t)}{(t-s)\prod_{j}(t - x_j)}$ allows us to use the following famous algebraic identity: 
$$
\sum_{i=1}^{n+2} \frac{P(x_i)}{\prod_{s\ne i} (x_i - x_j)} = a_{n+1}
$$
for any polynomial $P(t)= a_{n+1} t^{n+1} + \dots$ of degree $n+1$. Hence,  the above expression becomes
$$
\frac{1}{4} Y(s)Y(t)  \left(  a_{n+1} -\frac{P(t)}{(t-s)\prod_{j}(t - x_j)}  - \frac{P(s)}{(s-t)\prod_{j}(s - x_j)}\right) =
$$
$$
\frac{1}{4} a_{n+1} Y(s)Y(t) - \frac{1}{4} \left(\frac{P(t)}{t - s} \cdot \frac{Y(s)  }{Y(t)} + \frac{P(s)}{s-t} \cdot \frac{Y(t)}{Y(s)} \right)=
$$
$$
\frac{1}{4} a_{n+1} Y(s)Y(t) - \frac{1}{4} \left( \frac{P(t)-P(s)}{t - s} - \frac{Y(t)-Y(s)}{t-s} \left(
\frac{P(t)}{Y(t)} + \frac{P(s)}{Y(s)}\right)\right)
$$

\weg{
$$
\frac{1}{4} a_{n+1} Y(s)Y(t)  - \frac{1}{4}  \left( \frac{P(t)}{t - s} \cdot h(t,s) + \frac{P(s)}{s-t} \cdot h(s,t) \right),
$$
where $h(t,s) = \frac{Y(s)}{Y(t)}$. The only important property of $h(t,s)$ is that $h(t,t) = 1$.  
}

Hence, in view of \eqref{eq:bols_1}, we have 
$$
g^* ( \dd Y^\alpha_i, \dd Y^\beta_j) = 
\tfrac{1}{\alpha!} \tfrac{1}{\beta!}\tfrac{\dd^\alpha}{\dd s^\alpha}|_{s=\mu_i} \tfrac{\dd^\beta}{\dd t^\beta}|_{t=\mu_j} \ 
\left(\tfrac{1}{4} a_{n+1}  Y(s)Y(t)- \tfrac{1}{4}  \left( \tfrac{P(t)-P(s)}{t - s} - \tfrac{Y(t)-Y(s)}{t-s} \left(
\tfrac{P(t)}{Y(t)} + \tfrac{P(s)}{Y(s)}\right)\right)\right)
 =
$$
$$
\tfrac{1}{4} a_{n+1}  Y_i^\alpha Y_j^\beta    
 - \tfrac{1}{4} \tfrac{1}{\alpha!} \tfrac{1}{\beta!}\tfrac{\dd^\alpha}{\dd s^\alpha}|_{s=\mu_i} \tfrac{\dd^\beta}{\dd t^\beta}|_{t=\mu_j} \ 
\left( \tfrac{P(t)-P(s)}{t - s} - \tfrac{Y(t)-Y(s)}{t-s} \left(
\tfrac{P(t)}{Y(t)} + \tfrac{P(s)}{Y(s)}\right)\right) 
$$
under the condition that $\mu_i$ and $\mu_j$ are zeros of $P$ of order at least $\alpha+1$ and $\beta+1$ respectively. Hence, the derivative of the last term is zero.  Moreover,   the derivative of  $\tfrac{P(t)-P(s)}{t - s}$ equals zero also if $i \ne j$.   Taking into account that the curvature of $g$ equals $K = -\frac{1}{4} a_{n+1}$ we get  $g^* ( \dd Y^\alpha_i, \dd Y^\beta_j) = - K \, Y_i^\alpha Y_j^\beta$ for $i\ne j$.

\weg{
For instance for $\alpha=0$ and $\beta=0$ we get for $i \ne j$:
$$
\left(\tfrac{P(t)}{t - s} \cdot h(t,s) + \tfrac{P(s)}{s-t} \cdot h(s,t)\right)|_{s=\mu_i, t=\mu_j} = 0 
$$
since $P(\mu_i)=0$ and $P(\mu_j)=0$. 
Using similar arguments it is easy to show that   for $i \ne j$ we will always get zero.  
}

\weg{
However, when $i=j$ we get:
$$
\left(\tfrac{P(t)}{t - s} \cdot h(t,s) + \tfrac{P(s)}{s-t} \cdot h(s,t)\right)|_{t=\mu_i, s=\mu_i}  = \tfrac{P(t)-P(s)}{t-s}|_{t=\mu_i, s=\mu_i} = P'(\mu_i).  
$$

So that
$$
g^*(Y_i^0, Y_i^0) = \tfrac{1}{4}Y_i^0 Y_i^0 - \tfrac{1}{4} P'(\mu_i)\quad\mbox{and}\quad
g^*(Y_i^0, Y_j^0) = \tfrac{1}{4}Y_i^0 Y_j^0.
$$

In particular, in the simplest case when $\deg P = n$ and $\mu_1,\dots,\mu_n$ are simple roots of $P$, we see that the functions  $Y^0_1, \dots, Y^0_n$ are flat coordinates for $g$ and the matrix $(g^{ij})$ in these coordinates has the form
$$
 \operatorname{diag}\left(  -\tfrac{1}{4} P'(\mu_1), \dots, -\tfrac{1}{4} P'(\mu_n)\right),
$$
which coincide with the classical for formulas for ellipsoidal  coordinates in $\R^n$.

Similarly,  if $\deg P = n+1$ and $\mu_1,\dots,\mu_{n+1}$ are simple roots of $P$.  Then $Y_i^0=Y(\mu_i)$ are flat coordinates for $g$ and the matrix $g^{ij}= g^*(dY_i^0,dY_j^0)$ has the form     
$$
\operatorname{diag}\left(  -\tfrac{1}{4} P'(\mu_1), \dots,  -\tfrac{1}{4}  P'(\mu_{n+1})\right)  + \tfrac14 a_{n+1} Y Y^\top,  \quad \mbox{where $Y = (Y_1^0 \, \dots \, Y_{n+1}^0)^\top$}.
$$
}

Now assume that $i = j$ and $\mu_i$ is a root of multiplicity $k_i$.    W. l. o. g.  we may assume that $\mu_i = 0$. Then $0$ is a root of $P$ of some order $k_i$, $\alpha,\beta < k_i$, and we can write $P(t) = -4 \,c \, t^{k_i} \wt P_i(t) $ with $\wt P_i=\prod_{s\ne i} (t-\mu_s)^{k_s}=a_0 + a_1 t + a_2 t^2 + \dots$.  Then we have:
$$
- \tfrac{1}{4} \tfrac{1}{\alpha!} \tfrac{1}{\beta!}\tfrac{\dd^\alpha}{\dd s^\alpha}|_{s=\mu_i} \tfrac{\dd^\beta}{\dd t^\beta}|_{t=\mu_j} \ 
\left( \tfrac{P(t)-P(s)}{t - s}\right)=
c\, \tfrac{1}{\alpha!} \tfrac{1}{\beta!}\  \tfrac{\dd^\alpha}{\dd s^\alpha} \tfrac{\dd^\beta}{\dd t^\beta}|_{t=0, s=0} \tfrac{t^{k_i} \wt P_i(t)-s^{k_i} \wt P_i(s)}{t - s} = 
$$
$$  
\tfrac{1}{\alpha!}  \tfrac{1}{\beta!}\ \tfrac{\dd^\alpha}{\dd s^\alpha} \tfrac{\dd^\beta}{\dd t^\beta}|_{t=0, s=0} \left(  a_0 (t^{k_i-1} + t^{k_i-2} s +\dots + s^{k_i-1}) + a_1 (t^{k_i} + t^{k_i-1} s +\dots + s^{k_i})     \right) = 
$$
$$
= c \cdot a_k = c \cdot \tfrac{1}{k!}\tfrac{\dd^k}{\dd t^k}{}_{|\,t=\mu_i} \wt P_i (t), \quad \mbox{where   $k=\alpha+\beta - k_i + 1$.}
$$
Summarising, we obtain (recall that $\alpha < k_i$, $\beta < k_j$):
\begin{itemize}
\item  $\deg P \le n$:
\begin{equation}
\label{eq:bols_9}
g^*(\dd Y^\alpha_i, \dd Y_j^\beta) = \begin{cases}
0, & \mbox{\small if $i\ne j$}\\
c\cdot \wt P_i^{(k)}, & \mbox{\small if $i= j$ and $k = \alpha+\beta - k_i +1\ge 0$.}\\
\end{cases}
\end{equation}
\item $\deg P = n+1$:
\begin{equation}
\label{eq:bols_10}
g^*(\dd Y^\alpha_i, \dd Y_j^\beta) = \begin{cases}
- K\ Y^\alpha_i Y_j^\beta , & \mbox{\small if $i\ne j$},\\
- K \left( Y^\alpha_i Y_j^\beta - \wt P_i^{(k)}\right), & \mbox{\small if $i= j$ and $k = \alpha+\beta - k_i +1\ge 0$},\\
\end{cases}
\end{equation}
\end{itemize}
where  $\wt P_i^{(k)} =  \tfrac{1}{k!} \tfrac{\dd ^{k}}{\dd t^k}{}_{|\, t=\mu_i} \wt P_i(t)$, 
$\wt P_i(t)=\prod_{s\ne i}{(t-\mu_s)^{k_s}}$ and $K=c$ is the curvature of $g$.

In matrix form, this means that the block of $G$ that corresponds to the functions 
$Y^0_i, \dots, Y^{k_i-1}_i$
takes the form 
$$
c\cdot \begin{pmatrix}
0& \dots &0 & 0&  \wt P_i^{(0)} \\
\vdots & \iddots & \iddots &  \wt P_i^{(0)} &  \wt P_i^{(1)}\\
0 & 0 &\iddots & \iddots& \wt P_i^{(2)}\\
0 &  \wt P_i^{(0)} &  \wt P_i^{(1)}&\iddots &\vdots \\
  \wt P_i^{(0)}&  \wt P_i^{(1)}&  \wt P_i^{(2)} &\dots  &  \wt P_i^{(k_i-1)}
\end{pmatrix},
$$
which coincides with the description given in Remarks \ref{rem:matrix} and \ref{rem:matrix1}.

For the ``infinite'' root $\mu=\infty$, the proof is similar.  Recall that in this case $\deg P \le n$ and $Y^\alpha_\infty$ is defined by:
$$
Y_{\infty}^{\alpha}:= \tfrac{1}{\alpha!}\tfrac{\dd^\alpha}{\dd t^\alpha} \sqrt{\det(\Id - t\cdot {L})}_{|t=0}    - 
R^{(\alpha)},   \quad   \alpha=1,...,d=n-\deg P.
$$
For our further purposes,  we will also treat the case $\alpha = d+1$ with a slightly modified formula:
$$
Y_{\infty}^{d+1}:= \tfrac{1}{\alpha!}\tfrac{\dd^{d+1}}{\dd t^{d+1}} \sqrt{\det(\Id - t\cdot {L})}_{|t=0}    - 
\tfrac{1}{2}R^{(d+1)}.
$$
The constants $R^{(\alpha)}$ are defined as follows
$R^{(\alpha)} = \tfrac{1}{\alpha!} \tfrac{\dd^\alpha}{\dd t^\alpha}  \prod_{i=1}^m (1 - t\mu_i)^{k_i}{}_{|t=0}$.

As above, we will compute $g^* (\dd Y_i^\alpha, \dd Y^\beta_\infty)$ and $g^* (\dd Y_\infty^\alpha, \dd Y^\beta_\infty)$ as the derivatives:
\begin{equation}
\label{eq:bols_2}
g^* (\dd Y_i^\alpha, \dd Y^\beta_\infty) = \tfrac{1}{\alpha!} \tfrac{1}{\beta!} \tfrac{\dd^\alpha}{\dd s^\alpha}{}_{|\, s=\mu_i}\tfrac{\dd^\beta}{\dd t^\beta}{}_{|\,t=0} \ g^* (Y(s), \wt Y(t)), 
\end{equation}
and
\begin{equation}
\label{eq:bols_3}
g^* (\dd Y_\infty^\alpha, \dd Y^\beta_\infty) = \tfrac{1}{\alpha!} \tfrac{1}{\beta!} \tfrac{\dd^\alpha}{\dd s^\alpha}{}_{|\, s=0}\tfrac{\dd^\beta}{\dd t^\beta}{}_{|\,t=0} \ g^* (\wt Y(s), \wt Y(t)), 
\end{equation}
where $Y(s) = \sqrt{\det(s\cdot \Id - L)}$ and $\wt Y(t) = \sqrt{\det(\Id - t\cdot L)}$.
Since all the computations are quite similar to the case of finite roots $\mu_i$, we only indicate the most essential steps:
$$
g^*(\dd Y(s), \dd \wt Y(t)) = \frac{1}{4} Y(s) \wt Y(t) \sum_{i=1}^n \frac{P(x_i)}{(s-x_i)(t^{-1} - x_i)\prod_{i\ne j} (x_i - x_j)}=
$$ 
$$
-\frac{1}{4} Y(s) \wt Y(t) \left(  \frac{P(s)} {(s-t^{-1}) \prod_j (s - x_j)   } + \frac{P(t^{-1})}{(t^{-1}-s) \prod_j (t^{-1} - x_j)   }          \right)=
$$ 
$$
-\frac{1}{4} \left(  \frac{t\wt Y(t)} {(st-1)} \cdot \frac{P(s)}{Y(s)}+ \frac{t^{n+1} P(t^{-1})}{(1-st)}   
\frac{Y(s)}{\wt Y(t)}           \right).
$$

Computing the derivative \eqref{eq:bols_2} gives:
\begin{equation}
\label{eq:bols_6}
g^* (\dd Y_i^\alpha, \dd Y^\beta_\infty) = \begin{cases}
0,  & \mbox{if $\beta \le d$,}\\
c\cdot Y^\alpha_i, & \mbox{if $\beta = d+1$,}
\end{cases}
\end{equation}
 
\weg{
Without loss of generality we may assume that $\mu_i = 0$ so that  $P(t) =  t^{k_i} Q(t)$. 
Let $d= n -  \deg P$  so that  $s^{n} P(s^{-1}) =  s^d    R(s)$.
Then for any $\beta \le  d= n -  \deg P$ and $\alpha < k_i$ we have 
$$
g^* (\dd Y_i^\alpha, \dd Y^\beta_\infty) = 
-\tfrac{1}{4} \tfrac{1}{\alpha!} \tfrac{1}{\beta!}   \tfrac{\dd^\alpha}{\dd t^\alpha}{}_{| t=0}  \tfrac{\dd^\beta}{\dd s^\beta}{}_{| s=0}
 \left(  \frac{s\wt Y(s)} {(st-1)} \cdot \frac{P(t)}{Y(t)}+ \frac{s^{n+1} P(s^{-1})}{(1-st)}   
\frac{Y(t)}{\wt Y(s)}           \right) =
$$ 
$$
-\tfrac{1}{4} \tfrac{1}{\alpha!} \tfrac{1}{\beta!}   \tfrac{\dd^\alpha}{\dd t^\alpha}{}_{| t=0}  \tfrac{\dd^\beta}{\dd s^\beta}{}_{| s=0}
 \left( s t^{k_i} \frac{\wt Y(s)Q(t)} {(st-1)Y(t)} +  s^{d+1} \frac{R(s)Y(t)}{(1-st)\wt Y(s)}   
 \right) = 0
$$ 
 
 However for $\beta = d+1$ we get
 $$
g^* (\dd Y_i^\alpha, \dd Y^\beta_\infty) = -\tfrac{1}{4} \tfrac{1}{\alpha!} \tfrac{1}{(d+1)!}     \tfrac{\dd^\alpha}{\dd t^\alpha}{}_{| t=0}\tfrac{\dd^{d+1}}{\dd s^{d+1}}{}_{| s=0} 
 \left( s t^{k_i} \frac{\wt Y(s)Q(t)} {(st-1)Y(t)} +  s^{d+1} \frac{R(s)Y(t)}{(1-st)\wt Y(s)}   
 \right) = 
$$ 
$$
-\tfrac{1}{4}\tfrac{1}{\alpha!} \tfrac{\dd^\alpha}{\dd t^\alpha}{}_{| t=0}
 \left(   \frac{R(0)Y(t)}{\wt Y(0)}    
 \right) = -\tfrac{1}{4}R(0) \tfrac{1}{\alpha!} \frac{\dd^\alpha}{\dd t^\alpha}{}_{| t=0} Y(t) = 
 -\tfrac{1}{4}R(0) Y_i^{\alpha},
$$ 
where $R(s) =  s^{\deg P} P(s^{-1}) =  c\cdot \prod_{i=1}^m (1 - s\mu_i)^{k_i}$. In particular, under the assumption that $c=-4$, we get simply $g^* (\dd Y_i^\alpha, \dd Y^\beta_\infty) = Y_i^\alpha$. (Here we also used $\wt Y(0) = 1$.)

Thus finally,  setting   $\phi = Y_\infty^{d+1} = \frac{1}{(d+1)!}\frac{\dd^{d+1}}{\dd s^{d+1}}{}_{| s=0} \wt Y(s)$, we get
$$
g^* (\dd \phi, \dd Y_i^\alpha ) =  -\tfrac{c}{4}  \  Y_i^\alpha,
$$
where $c$ is the highest coefficient of $P(t)$.
}

Similarly, 
$$
g^*(\dd \wt Y(s), \dd \wt Y(t)) =   \frac{1}{4} \wt Y(t) \wt Y(s) \sum_{i=1}^n \frac{P(x_i)}{(t^{-1} - x_i)(s^{-1} - x_i)\prod_{i\ne j} (x_i - x_j)}=
 $$
  $$
-\frac{1}{4} \wt Y(t) \wt Y(s) \left(  \frac{P(t^{-1})} {(t^{-1}-s^{-1}) \prod_j (t^{-1} - x_j)   } + \frac{P(s^{-1})}{(s^{-1}-t^{-1}) \prod_j (s^{-1} - x_j)   }          \right)=
$$ 
$$
c \cdot st  \left(\frac{ t^{d}\, \wt P_\infty(t) - s^{d}\, \wt P_\infty(s) }{s-t} +
\left(\frac{t^{d}\, \wt P_\infty(t)}{\wt Y(t)} + \frac{s^{d}\, \wt P_\infty(s)}{\wt Y(s)}  \right) \cdot  \frac{\wt Y(s)-\wt Y(t)}{s-t}  \right), 
$$
where $\wt P_\infty (t) = \prod_{i=1}^m (1 - t\mu_i)^{m_i}$.

Computing the derivative \eqref{eq:bols_3} gives (notice that for the l.h.s of the below relation the constant term $R^{(\alpha)}$ in the definition of  $Y^\alpha_\infty$ does not play any role, but for the r.h.s. it does!):
\begin{equation}
\label{eq:bols_7}
g^*(\dd Y^\alpha_\infty, \dd Y_\infty^\beta) = \begin{cases} 
-c\cdot R^{(k)}, & \mbox{\small if $\alpha,\beta<d+1$ and $k = \alpha+\beta-(d+1)$}, \\
\phantom{-}c\cdot   Y_\infty^\beta\, , & \mbox{\small if $\alpha= d+1, \beta< d+1$},\\
\ 2 c\cdot Y^{d+1}_\infty, & \mbox{\small if $\alpha=  \beta= d+1$}.\\
\end{cases}
\end{equation}

Summarising the relations from \eqref{eq:bols_9}, \eqref{eq:bols_10}, \eqref{eq:bols_6}, \eqref{eq:bols_7} we obtain the statements of Theorems \ref{thm:casimirs} and  \ref{thm:generalised_casimirs}. More precisely,  
Theorem \ref{thm:generalised_casimirs} follows immediately from \eqref{eq:bols_10}, whereas Theorem \ref{thm:casimirs} follows from \eqref{eq:bols_9}, the first case of \eqref{eq:bols_6} and the first case of \eqref{eq:bols_7}.   These formulas also imply that the constant matrix $G$ related to 
our (generalised) flat coordinates (see Remark \ref{rem:1.2}) is block-diagonal.  The $i$-th block of $G$ corresponds to the $i$-th root $\mu_i$ of $P(t)$  (or, equivalently, to the functions $Y_i^0,\dots, Y_i^{k_i-1}$). If $d = n-\deg P>0$, then there is one more block related to the functions $Y_\infty^1,\dots, Y_\infty^{d}$ (and corresponding to the `infinite' root).   The structure of these blocks is as described in Remarks \ref{rem:matrix} and \ref{rem:matrix1}.

For our further purposes we will introduce the function $\phi = \frac{1}{c} Y^{(d+1)}_\infty$  (cf. formula \eqref{eq:bols_8}).  It follows from \eqref{eq:bols_6} and \eqref{eq:bols_7} that $\phi$ satisfies the following  
relations:
$$
g^*(\dd \phi, \dd Y_i^\alpha) = Y_i^\alpha   \quad\mbox{(including $i=\infty$)\quad and}\quad
g^*(\dd\phi, \dd\phi) = 2\phi,
$$
which will be used essentially in the next section.

\weg{
We now need to compute 
$$
g^*(\dd Y^\beta_\infty, \dd Y^\alpha_\infty) = \tfrac{1}{\alpha!} \tfrac{1}{\beta!}   \tfrac{\dd^\beta}{\dd t^\beta}{}_{| t=0}  \tfrac{\dd^\alpha}{\dd s^\alpha}{}_{| s=0} \ g^* (\dd \wt Y(t), \dd \wt Y(s))
$$
 
 There are 3 different cases:
 \begin{itemize}
 \item $\alpha,\beta < d+1$;
 \item $\alpha = d+1$, $\beta < d+1$;
 \item $\alpha = \beta = d+1$.
 \end{itemize}
 
 In the first case, only one term contributes:
 $$
 g^*(\dd Y^\beta_\infty, \dd Y^\alpha_\infty) = -\tfrac{1}{4}\tfrac{1}{\alpha!} \tfrac{1}{\beta!}   \tfrac{\dd^\beta}{\dd t^\beta}{}_{| t=0}  \tfrac{\dd^\alpha}{\dd s^\alpha}{}_{| s=0}  \ \frac{ st^{d+1}\, R(t) - ts^{d+1}\, R(s) }{s-t} =
 $$
 $$
 \tfrac{1}{4} a_k   \quad  \mbox{if }  \alpha+\beta = d+1+k, 
 $$
 where $R(t) = a_0 + a_1 t + a_2 t + \dots$.
 
 Equivalently,
 $$
 g^*(\dd Y^\beta_\infty, \dd Y^\alpha_\infty) =  \frac{1}{4} \frac{1}{k!}  \frac{\dd^k}{\dd t^k}{}_{|\, t=0} R(t),
 $$
 where $k = \alpha + \beta -(d+1)$.

Next we set $\alpha=d+1$ and compute  (now two or three terms will contribute) 
 $$
g^*(\dd Y^\beta_\infty, \dd Y^{d+1}_\infty)=  -\tfrac{1}{4}\tfrac{1}{(d+1)!} \tfrac{1}{\beta!}    
\tfrac{\dd^{d+1}}{\dd t^{d+1}}{}_{| t=0}  \tfrac{\dd^\beta}{\dd s^\beta}{}_{| s=0}  \left(  \frac{st^{d+1}\, R(t)} {s-t }  \cdot \frac{\wt Y(s)  }{\wt Y(t)  } + \frac{s^{d+1}t\, R(s)}{t-s}          \cdot \frac{\wt Y(t)}{\wt Y(s)  }  \right)=
$$ 
$$
-\tfrac{1}{4}\tfrac{1}{(d+1)!} \tfrac{1}{\beta!}    
\tfrac{\dd^{d+1}}{\dd t^{d+1}}{}_{| t=0}  \tfrac{\dd^\beta}{\dd s^\beta}{}_{| s=0} \left(
st^{d+1}\, R(t) \cdot b(t,s) +  \frac{ st^{d+1}\, R(t) - ts^{d+1}\, R(s) }{s-t} +  ts^{d+1}\, R(s) \cdot b(s,t)
\right)
$$

Let $\beta < d+1$, then the derivative of the third term is zero. Also, we can easily see that the derivative of the first term equals
$$
-\tfrac{1}{4} \tfrac{1}{\beta !}R(0) \tfrac{\dd^\beta}{\dd s^\beta}{}_{| s=0} (s b(0,s)) =
-\tfrac{1}{4} \tfrac{1}{\beta !} R(0) \tfrac{\dd^\beta}{\dd s^\beta}{}_{| s=0} \left( s\, \tfrac{\wt Y(s) - 1}{s} \right)=
$$ 
$$
-\tfrac{1}{4} \tfrac{1}{\beta !} R(0)\frac{\dd^\beta}{\dd s^\beta}{}_{| s=0} \wt Y(s) =  -\tfrac{1}{4}  R(0) \cdot Y_\infty^\beta.
$$ 
 
 The contribution of the second term will be given by the same formula as above
 $$
 -\tfrac{1}{4}\tfrac{1}{(d+1)!} \tfrac{1}{\beta!}    
\tfrac{\dd^{d+1}}{\dd t^{d+1}}{}_{| t=0}  \tfrac{\dd^\beta}{\dd s^\beta}{}_{| s=0}
\tfrac{ st^{d+1}\, R(t) - ts^{d+1}\, R(s) }{s-t}
 =  \tfrac{1}{4} \tfrac{1}{\beta!}    
 \tfrac{\dd^\beta}{\dd s^\beta}{}_{| s=0} R(s).
 $$
 
 Finally,
 $$
 g^* ( Y_\infty^{\beta},  Y_\infty ^{d+1} ) = \frac{1}{4}  \left(R^{(\beta)} (0) - R(0) \cdot Y_\infty^\beta\right).
 $$
 where $R^{(\beta)} = \frac{1}{\beta !} \tfrac{\dd^\beta}{\dd s^\beta}{}_{| s=0} R(s)$. 
 
Finally, we need
$$
g^*(Y_\infty^{d+1}, Y_\infty^{d+1}) = \tfrac{1}{(d+1)!} \tfrac{1}{(d+1)!}    
\tfrac{\dd^{d+1}}{\dd t^{d+1}}{}_{| t=0}  \tfrac{\dd^{d+1}}{\dd s^{d+1}}{}_{| s=0} \ g^* (\wt Y(s), \wt Y(t))=
$$
$$
-\tfrac{1}{4}  \tfrac{1}{(d+1)!} \tfrac{1}{(d+1)!}  \tfrac{\dd^{d+1}}{\dd t^{d+1}}{}_{| t=0}  \tfrac{\dd^{d+1}}{\dd s^{d+1}}{}_{| s=0}  \left(  \frac{st^{d+1}\, R(t)} {s-t }  \cdot \frac{\wt Y(s)  }{\wt Y(t)  } + \frac{s^{d+1}t\, R(s)}{t-s}          \cdot \frac{\wt Y(t)}{\wt Y(s)  }  \right)=
$$ 
$$
-\tfrac{1}{4}  \tfrac{1}{(d+1)!} \tfrac{1}{(d+1)!}  \tfrac{\dd^{d+1}}{\dd t^{d+1}}{}_{| t=0}  \tfrac{\dd^{d+1}}{\dd s^{d+1}}{}_{| s=0}  \left(
st^{d+1}\, R(t) \cdot b(t,s) +  \frac{ st^{d+1}\, R(t) - ts^{d+1}\, R(s) }{s-t} +  ts^{d+1}\, R(s) \cdot b(s,t)
\right)
$$ 
 The computation is basically the same. The only difference is that the third term gives the same contribution as the first one so that the final result will be
 $$
g^*(Y_\infty^{d+1}, Y_\infty^{d+1}) =  \tfrac{1}{4} \left(R^{(d+1)} (0) - 2 R(0) Y^{(d+1)}_\infty \right).
 $$

{\bf Summary.}

\begin{itemize}
\item  $\deg P \le n$:
$$
g^*(\dd Y^\alpha_i, \dd Y_j^\beta) = \begin{cases}
0, & \mbox{\small if $i\ne j$}\\
 \tfrac{1}{k!} \tfrac{\dd ^{k}}{\dd t^k}{}|_{t = \mu_i} Q_i(t), & \mbox{\small if $i= j$ and $k = \alpha+\beta - k_i +1\ge 0$}\\
\end{cases}
$$
where $Q_i(t) = c \cdot \prod_{s\ne i}{(t-\mu_s)^{k_s}} = -\frac{1}{4}\frac{P(t)}{(t-\mu_i)^{k_i}}$ and $k = \alpha+\beta - k_i +1$, $k\ge 0$.  Recall that $\alpha < k_i$, $\beta < k_j$.
$$
g^*(\dd Y^\alpha_\infty, \dd Y_j^\beta) = \begin{cases}
0, & \mbox{if $\alpha<d+1$}\\
R(0)Y_j^\beta, & \mbox{if $\alpha= d+1$},\\
\end{cases}
$$
where $R(s) = -\frac{1}{4} s^{\deg P} P(s^{-1}) = c \cdot \prod (1 - t\mu_i)^{k_i}$.
$$
g^*(\dd Y^\alpha_\infty, \dd Y_\infty^\beta) = \begin{cases} 
-  R^{(k)} (0), \  k = \alpha+\beta-(d+1), & \mbox{if $\alpha,\beta<d+1$}, \\
-  R^{(\beta)} (0) + R(0) \cdot Y_\infty^\beta, & \mbox{if $\alpha= d+1, \beta< d+1$},\\
-  R^{(d+1)} (0) + 2 R(0) \cdot Y^{(d+1)}_\infty, & \mbox{if $\alpha=  \beta= d+1$},\\
\end{cases}
$$
where $R^{(\beta)} = \frac{1}{\beta !} \tfrac{\dd^\beta}{\dd s^\beta}{}_{| s=0} R(s)$.

\item $\deg P = n+1$:
$$
g^*(\dd Y^\alpha_i, \dd Y_j^\beta) = \begin{cases}
- c\,  Y^\alpha_i Y_j^\beta , & \mbox{if $i\ne j$}\\
- c\, Y^\alpha_i Y_j^\beta +  \tfrac{1}{k!} \tfrac{\dd ^{k}}{\dd t^k}{}|_{t = \mu_i} Q(t), & \mbox{if $i= j$}\\
\end{cases}
$$
where $Q_i(t) = -\frac{1}{4}\frac{P(t)}{(t-\mu_i)^{k_i}}$.

\end{itemize}

{\bf Summary (attempt 2).}

\begin{itemize}
\item  $\deg P \le n$:
$$
g^*(\dd Y^\alpha_i, \dd Y_j^\beta) = \begin{cases}
0, & \mbox{\small if $i\ne j$}\\
c\cdot Q_i^{(k)}, & \mbox{\small if $i= j$ and $k = \alpha+\beta - k_i +1\ge 0$}\\
\end{cases}
$$
where  
$Q^{(k)}_i =  \tfrac{1}{k!} \tfrac{\dd ^{k}}{\dd t^k}{}_{|\, t=\mu_i} \left(\prod_{s\ne i}{(t-\mu_s)^{k_s}}\right)$.  Recall that $\alpha < k_i$, $\beta < k_j$.
$$
g^*(\dd Y^\alpha_\infty, \dd Y_j^\beta) = \begin{cases}
0, & \mbox{\small if $\alpha<d+1$}\\
c\cdot Y_j^\beta, & \mbox{\small if $\alpha= d+1$},\\
\end{cases}
$$
and
$$
g^*(\dd Y^\alpha_\infty, \dd Y_\infty^\beta) = \begin{cases} 
-c\cdot R^{(k)}, & \mbox{\small if $\alpha,\beta<d+1$ and $k = \alpha+\beta-(d+1)$}, \\
-c\cdot \left(R^{(\beta)}  -   Y_\infty^\beta\right), & \mbox{\small if $\alpha= d+1, \beta< d+1$},\\
-c\cdot\left(R^{(d+1)} - 2 \, Y^{(d+1)}_\infty\right), & \mbox{\small if $\alpha=  \beta= d+1$},\\
\end{cases}
$$
where $R^{(\beta)} = \tfrac{1}{\beta !} \tfrac{\dd^\beta}{\dd t^\beta}{}_{| t=0} \Bigl( \prod_{i=1}^m (1 - t\mu_i)^{k_i}\Bigr)$.

\item $\deg P = n+1$:
$$
g^*(\dd Y^\alpha_i, \dd Y_j^\beta) = \begin{cases}
- K\ Y^\alpha_i Y_j^\beta , & \mbox{\small if $i\ne j$},\\
- K \left( Y^\alpha_i Y_j^\beta - Q^{(k)}_i\right), & \mbox{\small if $i= j$ and $k = \alpha+\beta - k_i +1\ge 0$}.\\
\end{cases}
$$
Here  $K=c$ is the curvature of $g$.

\end{itemize}
} 

\subsection{Proof of Theorem \ref{thm:F_casimirs}}

We start with the case of a warp product (contravariant) metric
$$
g(u,v)=g_1(u)  +  \tfrac{1}{f^2(u)} \, g_2(v),
$$
where $g_1$ and $g_2$ are constant curvature metrics (with curvatures $K_1$ and $K_2$ respectively).
Our first goal is to construct (generalised) flat coordinates for $g$ ``from''  (generalised) flat coordinates $u=(u^1, u^2, \dots )$ and $v=(v^1, v^2, \dots)$ on the $g_1$- and $g_2$-blocks.  Recall that (generalised) flat coordinates are characterised by the relation
$$
g_1 (\dd u^i, \dd u^j) = (G_1)^{ij}  - K_1 u^i u^j,  \quad\mbox{and similarly}\quad
g_2 (\dd v^k, \dd v^m) = (G_2)^{km}  - K_2 v^k v^m
$$ 
where $G_1$  and $G_2$ are constant matrices.  It is a well-known fact that in order for $g$ to have constant curvature, the function $f$ must satisfy the following two conditions: (1)  $f=f(u)$ is a (generalised) flat coordinate for $g_1$ and (2) $g_1^*(\dd f,\dd f) = K_2 - K_1 f^2$. We will use these conditions in our computations below.  In the context of Theorem \ref{thm:F_casimirs}, they are fulfilled by construction. 

First assume that $K_2 \ne 0$.   Let $u^1, \dots, u^p$ and $v^1, \dots, v^{n_2+1}$ be (generalised) flat coordinates of the first and second blocks respectively (here $p$ is either $n_1$ or $n_1+1$ depending on whether $g_1$ is flat or not).
W.l.o.g.  we assume that $f=u^{1}$  and  $u^2,\dots, u^p$   span the orthogonal complement of $f=u^1$. In other words, 
 $$
 G_1 = \begin{pmatrix}  c & 0 \\ 0 &\tilde G_1   \end{pmatrix}  \quad\mbox{with  $c\ne 0$}.
 $$
 \begin{Proposition}\label{prop:bols1} 
 The functions $u^2, \dots, u^{p},    fv^1, \dots fv^{n_2+1}$ are generalised flat coordinates for $g$. The corresponding matrix $G$ w.r.t. these coordinates takes the following form
 $$
 G =  \begin{pmatrix}   \tilde G_1 & 0 \\ 0 & G_2  \end{pmatrix}.  
 $$ 
 \end{Proposition}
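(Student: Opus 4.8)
The plan is to verify directly, via the characterisation in Remark \ref{rem:1.2}, that the functions $u^2,\dots,u^p,\ fv^1,\dots,fv^{n_2+1}$ satisfy the defining relation
$$
g^*(\dd w^a,\dd w^b)=G^{ab}-K\,w^a w^b
$$
of generalised flat coordinates of $g$, with $K=K_1$ and $G=\operatorname{diag}(\tilde G_1,G_2)$, and then to deduce completeness from a dimension count. I will use throughout that $g=g_1(u)+f^{-2}(u)\,g_2(v)$ is block diagonal with respect to the splitting into $u$- and $v$-variables, so that $g^*(\dd a,\dd b)=g_1^*(\dd a,\dd b)$ for functions $a,b$ of $u$ only, $g^*(\dd a,\dd b)=f^{-2}g_2^*(\dd a,\dd b)$ for functions of $v$ only, and $g^*(\dd a,\dd b)=0$ if $a$ depends on $u$ and $b$ on $v$. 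I will also use that the warped product $g$ has constant curvature equal to $K_1$ (the classical fact on curvatures of warped products, already recalled above; it is in any case forced by the computation below), and the observation that by condition (2) one has $c=(G_1)^{11}=g_1^*(\dd f,\dd f)+K_1f^2=K_2\neq 0$, which is precisely what makes the block normalisation of $G_1$ legitimate.

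The core computation is the pairing among the functions $fv^k$. Since $\dd(fv^k)=v^k\,\dd f+f\,\dd v^k$ and the mixed terms $g^*(\dd f,\dd v^k)$ vanish, I get
$$
g^*\bigl(\dd(fv^k),\dd(fv^m)\bigr)=v^k v^m\,g_1^*(\dd f,\dd f)+f^2\cdot f^{-2}\,g_2^*(\dd v^k,\dd v^m).
$$
Substituting $g_1^*(\dd f,\dd f)=K_2-K_1f^2$ and $g_2^*(\dd v^k,\dd v^m)=(G_2)^{km}-K_2v^kv^m$, the two $K_2v^kv^m$ contributions cancel, leaving $(G_2)^{km}-K_1(fv^k)(fv^m)$, which is exactly the required relation with the $G_2$-block. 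For $i,j\in\{2,\dots,p\}$ the $u^i$ depend on $u$ alone, so $g^*(\dd u^i,\dd u^j)=g_1^*(\dd u^i,\dd u^j)=(\tilde G_1)^{ij}-K_1u^iu^j$, giving the $\tilde G_1$-block. Finally $g^*(\dd u^i,\dd(fv^k))=v^k\,g_1^*(\dd u^i,\dd f)=v^k\bigl((G_1)^{i1}-K_1u^if\bigr)$, and $(G_1)^{i1}=0$ for $i\ge 2$ by the assumed block form of $G_1$, so this equals $-K_1u^i(fv^k)$, matching the vanishing off-diagonal block of $G=\operatorname{diag}(\tilde G_1,G_2)$. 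This establishes the displayed relation with the claimed $G$.

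It then remains to check that these functions form a \emph{complete} set of (generalised) flat coordinates. The matrix $\operatorname{diag}(\tilde G_1,G_2)$ is non-degenerate: $G_2$ is so by hypothesis, and $\tilde G_1$ is the restriction of the non-degenerate form $G_1$ to the $g_1^*$-orthogonal complement of $\dd f$, which is non-degenerate exactly because $c=(G_1)^{11}=K_2\neq 0$. By Remark \ref{rem:1.2}, the verified relation together with non-degeneracy of $G$ means precisely that the $w^a$ are generalised flat coordinates for $g$. Their number is $(p-1)+(n_2+1)$, which equals $n_1+n_2$ when $g_1$ is flat (then $p=n_1$, and $g$ is flat since $K_1=0$) and equals $n_1+n_2+1$ when $K_1\neq 0$ (then $p=n_1+1$); in both cases this is the correct count for a constant curvature metric of the relevant dimension, so the set is complete and Proposition \ref{prop:bols1} follows. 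I do not anticipate a genuine obstacle: the only delicate points are the bookkeeping of which entry of $G_1$ the coordinate $f$ occupies (handled by the normalisation fixed before the statement) and the identity $c=K_2$ that expresses compatibility of the warping.
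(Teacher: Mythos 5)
Your proposal is correct and follows essentially the same route as the paper: expand $\dd(fv^k)=v^k\,\dd f+f\,\dd v^k$, use the block structure of the warped product together with the two conditions $g_1^*(\dd f,\dd f)=K_2-K_1f^2$ and $(G_1)^{1j}=0$ for $j\ge 2$, and check the three pairings, obtaining exactly the matrix $\operatorname{diag}(\tilde G_1,G_2)$ with residual curvature $K_1$. Your added observations (that $c=(G_1)^{11}=K_2\neq 0$, and the dimension count giving $n$ resp.\ $n+1$ functions) are accurate and only supplement what the paper leaves implicit.
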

 
 \begin{proof}
 We need to verify three relations:
 $$
 \begin{aligned}
 \mathrm{(i1)}: \ \ & g^* (\dd u^i, \dd u^j) =  (\tilde G_1)_{ij} - K_1 u^i u^j, \quad i,j \ge 2;\\
 \mathrm{(i2)}: \ \ &g^* (\dd u^i, \dd (fv^k)) =   - K_1 u^i (f v^k), \quad i \ge 2;\\
 \mathrm{(i3)}: \ \ &g^* (\dd (fv^k), \dd (fv^m)) = (G_2)_{km}  - K_2 (fv^k) (f v^m).
 \end{aligned}
$$
Relation (i1) is obvious as  $g^* (\dd u^i, \dd u^j) =  g_1^* (\dd u^i, \dd u^j)$. Next we have
$$
g^* (\dd u^i,  \dd(f v^k)) = g^*( \dd u^i,   v^k \dd f + f \dd v^k) = g_1^* (\dd u^i , \dd f) v^k = g_1^* (\dd u^i , \dd u^1) v^k =
(0 - K_1 u^i f) v^k,
$$
as needed.  Finally, we compute  (using the additional condition $g_1^* (\dd f , \dd f) = K_2 - K_1 f^2$):
 $$
 g^* ( \dd (f v^k), \dd (fv^m)) = g^* ( v^k \dd f {+} f \dd v^k,  v^m \dd f {+} f \dd v^m) = g_1^* (\dd f , \dd f) v^k v^m + \tfrac{1}{f^2} g^*_2 (f \dd v^k, f \dd v^m) =
 $$
 $$
 (K_2 - K_1 f^2) v^k v^m + \left( (G_2)_{ij} - K_2 v^k v^m\right) = (G_2)_{ij} - K_1 (fv^k) (fv^m),
 $$
 as required.
 \end{proof}
 
Next, we assume $K_2 =0$.   In this case,  $G_2$ is of size $n_2\times n_2$.  W.l.o.g. we assume that the `orthogonal complement' to $f = u^1$ (in the sense of $G_1$) is spanned by $u^1, \dots, u^{p-1}$  and  $g_1^* (\dd f, \dd u^p) = b\ne 0$ so that $(G_1)^{1j}=0$ for all $j\ne p$. 
 
In addition to the flat coordinates $v^1, \dots, v^{n_2}$ and (generalised) flat coordinates $u^1,\dots, u^p$, we consider a function $\phi$ on the second block such that
$g_2^* (\ddd\phi, \ddd v^i) = v^i$ and $g_2^* (\ddd\phi, \ddd \phi) = 2\phi$.

\begin{Proposition}\label{prop:bols2}
The functions 
\begin{equation}
\label{eq:bols_4}
u^1, \dots, u^{p-1}, \  \tilde u^{p} = u^{p} - b\cdot f\phi, \  fv^1,\dots, f v^{n_2}
\end{equation}
form a collection of independent (generalised) flat coordinates for $g$.

The corresponding matrix $G$  w.r.t. these coordinates takes the form 
$$
G = \begin{pmatrix}  G_1 & 0 \\ 0 & G_2 \end{pmatrix}.
$$
\end{Proposition}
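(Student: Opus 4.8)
The plan is to verify, for every pair of the functions in \eqref{eq:bols_4}, the relation of Remark \ref{rem:1.2} characterising (generalised) flat coordinates, exactly as in the proof of Proposition \ref{prop:bols1}: we will show $g^*(\dd Y^a,\dd Y^b) = G^{ab} - K_1\, Y^a Y^b$ with $G = \operatorname{diag}(G_1, G_2)$ in the ordering of \eqref{eq:bols_4}; by Remark \ref{rem:1.2} this simultaneously exhibits the $Y^a$ as (generalised) flat coordinates and (re)confirms that $g$ has constant curvature $K_1$. The inputs are: (a) $g = g_1(u) + \tfrac{1}{f^2(u)}\,g_2(v)$ has no mixed part, so $g^*(\dd\xi,\dd\eta)$ equals $g_1^*(\dd\xi,\dd\eta)$, $\tfrac{1}{f^2}g_2^*(\dd\xi,\dd\eta)$ or $0$ according as $\xi,\eta$ both depend only on $u$, both only on $v$, or one only on $u$ and the other only on $v$; (b) condition (2) above, i.e.\ $g_1^*(\dd f,\dd f) = K_2 - K_1 f^2 = -K_1 f^2$; (c) the defining properties $g_2^*(\dd\phi,\dd v^i) = v^i$ and $g_2^*(\dd\phi,\dd\phi) = 2\phi$ of the auxiliary function; and (d) the normalisation $(G_1)^{1j} = 0$ for $j\neq p$ and $(G_1)^{1p} = b$, whence $g_1^*(\dd u^i,\dd f) = g_1^*(\dd u^i, \dd u^1) = -K_1 u^i f$ for $i\le p-1$ and $g_1^*(\dd u^p,\dd f) = b - K_1 u^p f$.

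First I would substitute $\dd\tilde u^p = \dd u^p - b\phi\,\dd f - bf\,\dd\phi$ and $\dd(fv^k) = v^k\,\dd f + f\,\dd v^k$ into $g^*(\cdot\,,\cdot)$ and expand. The pairs not involving $\tilde u^p$ — namely $(u^i,u^j)$, $(u^i, fv^k)$ and $(fv^k, fv^m)$ with $i,j\le p-1$ — are handled exactly as in Proposition \ref{prop:bols1} (now with $K_2 = 0$ and using $(G_1)^{i1} = 0$), giving $(G_1)^{ij} - K_1 u^i u^j$, $-K_1 u^i(fv^k)$ and $(G_2)^{km} - K_1(fv^k)(fv^m)$ respectively. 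For the pairs with $\tilde u^p$ one collects the $\phi$-dependent terms; using (a)--(d) they assemble into completed squares, yielding $g^*(\dd u^i,\dd\tilde u^p) = (G_1)^{ip} - K_1 u^i\tilde u^p$, $g^*(\dd\tilde u^p,\dd(fv^k)) = -K_1\,\tilde u^p(fv^k)$ and $g^*(\dd\tilde u^p,\dd\tilde u^p) = (G_1)^{pp} - K_1(\tilde u^p)^2$. Reading off the constant terms gives $G = \operatorname{diag}(G_1, G_2)$, which is non-degenerate because $G_1$ and $G_2$ are.

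The one step that is not purely mechanical, and which I expect to be the crux, is the pairing $g^*(\dd\tilde u^p,\dd\tilde u^p)$: here the cross-term $-2b\phi\,g_1^*(\dd u^p,\dd f)$ contributes a scalar (i.e.\ $Y^aY^b$-free) piece $-2b^2\phi$ via $(G_1)^{1p} = b$, while the term $b^2f^2\,g^*(\dd\phi,\dd\phi) = b^2\,g_2^*(\dd\phi,\dd\phi)$ contributes $+2b^2\phi$ via $g_2^*(\dd\phi,\dd\phi) = 2\phi$, and the two cancel exactly, leaving the constant $(G_1)^{pp}$. This cancellation is precisely why the auxiliary function is normalised by $g_2^*(\dd\phi,\dd\phi) = 2\phi$ rather than $=\phi$; with any other normalisation the entry $G^{pp}$ would not be constant and $\tilde u^p$ would fail to be a (generalised) flat coordinate. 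Finally, independence of the functions \eqref{eq:bols_4} follows as in the one-block case from the independence of $u^1,\dots,u^p$, the independence of $v^1,\dots,v^{n_2}$, and the fact that $\phi$ is not an affine combination of $v^1,\dots,v^{n_2}$ (immediate from $g_2^*(\dd\phi,\dd\phi) = 2\phi$), together with the fact that their number $p + n_2$ equals $\dim M$ when $g$ is flat and $\dim M + 1$ when $K_1\neq 0$.
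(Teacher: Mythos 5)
Your proposal is correct and follows essentially the same route as the paper's proof: a direct verification of $g^*(\dd Y^a,\dd Y^b)=G^{ab}-K_1Y^aY^b$ for each pair, reducing to the already-treated cases for pairs not involving $\tilde u^p$ and expanding $\dd\tilde u^p=\dd u^p-b\phi\,\dd f-bf\,\dd\phi$ for the rest. You correctly isolate the decisive cancellation $-2b^2\phi+b^2g_2^*(\dd\phi,\dd\phi)=0$ coming from the normalisation $g_2^*(\dd\phi,\dd\phi)=2\phi$, which is exactly the computation the paper performs for $g^*(\dd\tilde u^p,\dd\tilde u^p)$.
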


\begin{proof} The components of $G$ related to the ``new'' flat coordinates are all as expected (the proof is literally the same as in previous proposition), except perhaps for those related to the new function   $\tilde u^p = u^p - f\phi$. Below we compute the inner product of $\dd\tilde u^p$ with the differentials of each of the functions \eqref{eq:bols_4}.

We first compute $g^*(\dd\tilde u^p, \dd u^i)$,  $i\ne p$: 
$$
g^*(\dd \tilde u^p, \dd u^i) = 
 g^*( \dd u^p -  b\phi \dd f - bf \dd \phi,  \dd u^i)=g_1^*( \dd u^p ,  \dd u^i) - b\phi \ g_1^*( \dd f ,  \dd u^i)  = 
$$
$$
(G_1)_{pi} - K_1 u^p u^i + b \phi \cdot K_1 f u^i   =
(G_1)_{pi} - K_1 \tilde u^p u^i,
$$
as needed. Next we have
$$
g^*\left(\dd\tilde u^p, \dd(fv^j)\right) = g^*\left(  \dd u^p   - b\phi \dd f  - bf \dd \phi)   , f \dd v^j + v^j \dd f\right) =
$$ 
$$
g_1^*(\dd u^p,  v^j \dd f) - g_1^* (b\phi \dd f,  v^j \dd f)  -  b g_2^* (\dd \phi, \dd v^j)= 
$$
$$
v^j \, g_1^*(\dd u^p,  \dd f)  + K_1 b \phi v^j f^2 -  b   g_2^* (\dd \phi, \dd v^j)= 
$$
$$
v^j ( b - K_1 u^p f)  + K_1 b\phi v^j f^2 - b v^j  =
- K_1 f v^j (u^p -b f\phi) = - K_1\tilde u^p  (f v^j),
 $$
as needed. Finally, 
 $$
 g^*\left(\dd\tilde u^p, \dd\tilde u^p\right) = g^*(\dd u^p   - b\phi \dd f  - bf \dd\phi , \dd u^p   - b\phi \dd f  - bf \dd\phi) =
 $$
 $$
 g_1^* (\dd u^p   - b\phi \dd f , \dd u^p   - b\phi \dd f ) + b^2 g_2^* (\dd\phi, \dd\phi) =
 $$
 $$
 (G_1)_{pp} - K_1 (u^p - b\phi  f)^2  -  2\phi b g_1^*(\dd u^p, \dd f) + 2b^2\phi  =
 (G_1)_{pp}  - K_1 \tilde (u^p)^2,
 $$
 as stated. \end{proof}
 
To prove Theorem \ref{thm:F_casimirs}, we will proceed by induction. In particular, we will use $g = g_1 + \tfrac{1}{f^2} g_2$ as a {\it new} metric $g_2$.  For this purpose, we will need an analog of the function $\phi$ for $g$.   
 Assume that $K_1 =0 $, then  $g$ is flat and there exists a (unique) function $\phi_g$ such that for each flat coordinate $w$ from Proposition \ref{prop:bols2} we have
$$
g^*(\dd \phi_g, \dd w) = w \quad\mbox{and}\quad g^*(\dd\phi_g, \dd\phi_g) = 2\phi_g.
$$ 

\begin{Proposition}\label{prop:bols3}
Let $\phi_{g_1}$ be the function on the first block with above properties w.r.t. $g_1$.  Then this function will still satisfy these properties w.r.t. $g$, that is, $\phi_g = \phi_{g_1}$.
\end{Proposition}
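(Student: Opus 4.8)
The plan is to check that the function $\phi_{g_1}$ itself satisfies the two relations that characterise $\phi_g$, and then to appeal to the uniqueness of $\phi_g$. The decisive structural fact is that $\phi_{g_1}$ depends on the first-block variables $u$ only, so $\dd\phi_{g_1}$ lies in the span of $\dd u^1,\dots,\dd u^p$. Since the contravariant metric splits, $g^*(\dd u^i,\dd u^j)=g_1^*(\dd u^i,\dd u^j)$ and $g^*(\dd u^i,\dd v^k)=0$, pairing $\dd\phi_{g_1}$ against a $u$-differential reduces to the $g_1^*$-pairing, while pairing it against a $v$-differential gives zero; in particular $g^*(\dd\phi_{g_1},\dd\phi_{g_2})=0$.

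Next I would run through the flat coordinates delivered by Proposition \ref{prop:bols2}, namely $u^1,\dots,u^{p-1}$, $\tilde u^p=u^p-b\,f\phi_{g_2}$ and $fv^1,\dots,fv^{n_2}$, and verify $g^*(\dd\phi_{g_1},\dd w)=w$ in each case. For $w=u^i$ with $i\le p-1$ this is immediate from $g_1^*(\dd\phi_{g_1},\dd u^i)=u^i$. For $w=fv^j$ I expand $\dd(fv^j)=v^j\,\dd f+f\,\dd v^j$: the second term pairs to zero, and the first gives $v^j\,g_1^*(\dd\phi_{g_1},\dd f)=v^jf$, using that $f=u^1$ is a flat coordinate of $g_1$. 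For $w=\tilde u^p$ I expand $\dd\tilde u^p=\dd u^p-b\phi_{g_2}\,\dd f-bf\,\dd\phi_{g_2}$; the three terms contribute $u^p$, $-b\phi_{g_2}f$ and $0$ (the last by orthogonality of the blocks), summing to $u^p-bf\phi_{g_2}=\tilde u^p$. Finally $g^*(\dd\phi_{g_1},\dd\phi_{g_1})=g_1^*(\dd\phi_{g_1},\dd\phi_{g_1})=2\phi_{g_1}$, which is the second defining relation.

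It then remains to note that $\phi_g$ is uniquely determined by these relations: if two functions satisfied them, their difference would be $g^*$-orthogonal to every $\dd w$ in the flat-coordinate system, hence have vanishing differential, and subtracting the two quadratic relations would force the resulting additive constant to be zero; therefore $\phi_g=\phi_{g_1}$. The whole argument is bookkeeping, and the only delicate point is to keep careful track of which differentials live in which block, so that the cross-terms $g^*(\dd u^\bullet,\dd v^\bullet)$ — above all $g^*(\dd\phi_{g_1},\dd\phi_{g_2})$ — are correctly seen to vanish, together with the remark that $f$ is a flat coordinate of $g_1$ so that $\phi_{g_1}$ pairs with it in the expected way.
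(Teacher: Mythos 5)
Your proposal is correct and follows essentially the same route as the paper: verify the two defining relations for $\phi_{g_1}$ against each flat coordinate produced by Proposition \ref{prop:bols2}, using the block-diagonal structure of $g^*$ to kill the cross-terms and $g_1^*(\dd\phi_{g_1},\dd f)=f$ for the terms involving $\dd f$. Your explicit uniqueness argument at the end (difference of two candidates has vanishing differential, and the quadratic relation kills the constant) is a welcome spelling-out of what the paper leaves implicit in the word ``unique''.
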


\begin{proof} These properties are obviously satisfied for $w= u^i$, $i=1,\dots, p-1$.  We also have
$g^*(\dd\psi, \dd\psi) = g_1^*(\dd\psi, \dd\psi) = 2\psi$.

So we only need to compute $g^*(\dd\psi,  \dd(fv^i))$ and $g^*(\dd\psi, \dd \tilde u^p)$. We have
 $$
 g^*(\dd\phi_{g_1},  \dd(fv^i)) = g^*(\dd\phi_{g_1}, v^j \dd f + f\dd v^j) = v^j g^*(\dd\phi_{g_1},  \dd f) = f v^j.
 $$
 Similarly, 
$$
\begin{aligned}
g^*(\dd\phi_{g_1},  \dd \tilde u^p)= g^*(\dd\phi_{g_1},  \dd u^p - b f \dd \phi - b\phi \dd f)&=\\
g^*(\dd\phi_{g_1},  \dd u^p) - b\phi g^*(\dd\phi_{g_1}, \dd f) &= 
u^p - b\phi f = \tilde u^p,
\end{aligned}
$$
as required.  \end{proof}

Thus, we have proved everything we need in the case of two blocks.  One can easily see that in the case when $\mathsf F$ contains only two vertices with one edge between them,  the (generalised) flat coordinates for $g$ given by Propositions \ref{prop:bols1} and \ref{prop:bols1} coincides with those from Theorem \ref{thm:F_casimirs}.   The general case can now be obtained by applying the above Propositions to an arbitrary graph $\mathsf F$.

Indeed Propositions \ref{prop:bols1}, \ref{prop:bols2} and   \ref{prop:bols3}  allow us to reconstruct flat coordinates for $g$ by adding blocks step-by-step.  Notice that at each step,  flat coordinates for every ``intermediate'' metric related to any connected subgraph of $\mathsf F$ will be defined via the same procedure.  The three steps from Section \ref{sect:3steps} can be understood as adaptation of Propositions  \ref{prop:bols1} and \ref{prop:bols2}   to our more specific situation.   This step-by-step reconstruction procedure leads to the conclusion of Theorem \ref{thm:F_casimirs}.

\vspace{1ex} 
\noindent{\bf Acknowledgements and Data Availability Statement.}  We thank Ernie Kalnins, Ray Mclenaghan   and the anonimous referee   for their  valuable comments.  The research of V.M.  was supported by  DFG, grants number 455806247 and 529233771, and by 
ARC Discovery Programme, grant DP210100951.   All data generated or analysed during this study are included
in this published article.


\begin{thebibliography}{99}

\bibitem{alber} S.\,I.\, Alber, { 
On stationary problems for equations of Korteweg-de Vries type.}
Comm. Pure Appl. Math. {\bf 34}(2)(1981),  259--272.


\bibitem{flat} S.\,Bandyopadhyay, B.\,Dacorogna, V.\,S.\,Matveev, M.\,Troyanov, {  
Bernhard Riemann 1861 revisited: existence of flat coordinates for an arbitrary bilinear form}.    Math. Zeit. {\bf   305}(1)(2023),  12. 


\bibitem{beltrami} E.\,Beltrami, { Risoluzione del problema: riportare i punti di una superficie sopra un piano in modo che le linee geodetiche vengano rappresentate da linee rette.} 
 Ann. Mat. {\bf 1}(7)(1865),  185--204.

\bibitem{benenti-1}  S.\,Benenti, {Separability structures on Riemannian manifolds.}  Differential geometrical methods in mathematical physics (Proc. Conf., Aix-en-Provence/Salamanca, 1979), pp. 512–538, Lecture Notes in Math., {\bf 836}, Springer, Berlin, (1980).


\bibitem{benenti0}  S.\,Benenti,  { Inertia tensors and St\"ackel systems in the Euclidean spaces.}
 Rend. Semin. Matem. Univ. Polit. Torino,  {\bf 50}(1992),  315--341.

\bibitem{benenti1}  S.\,Benenti,  {   Orthogonal separable dynamical systems. }  Differential geometry and its applications (Opava, 1992), 163--184, Math. Publ., {\bf 1}, Silesian Univ. Opava, Opava, 1993.  \url{ http://www.sergiobenenti.it/cp/56.pdf}

\bibitem{benenti2} S.\,Benenti, { Special symmetric two-tensors, equivalent dynamical systems, cofactor and bi-cofactor systems.}   Acta Applicandae Mathematicae {\bf 87}(2005), 33--91. 


\bibitem{blaszak2007} M.\,Blaszak, A.\,  Sergyeyev, {  Natural coordinates for a class of Benenti systems.} 
 Phys. Lett. A {\bf 365}(1-2)(2007), 28--33.

\bibitem{blaszak_book} M.\,Blaszak,  {Quantum versus classical mechanics and integrability problems — towards a unification of approaches and tools.} Springer, Cham, 2019. xiii+460 pp.


\bibitem{blaszak_last} M.\,Blaszak,  B.\,Szablikowski,    K.\,Marciniak, { 
 St\"ackel representations of stationary KdV systems.}   Reports on Mathematical Physics {\bf 92}(3)(2023), 323--346. 

\bibitem{benenti} A.\,V.\,Bolsinov,  V.\,S.\,Matveev,  {Geometrical interpretation of Benenti systems. J. Geom.  Phys., } {\bf 44}(2003), 489--506.

\bibitem{splitting} A.\,Bolsinov, V.\,S.\,Matveev, {  Splitting and gluing lemmas for geodesically equivalent pseudo-Riemannian metrics.} Trans. Amer. Math. Soc.   {\bf 363}(8)(2011),  4081--4107.



\bibitem{NijenhuisAppl2} A.\,V.\,Bolsinov, A.\,Yu. Konyaev, V.\,S.  Matveev, { Applications of Nijenhuis geometry II: maximal pencils of multi-Hamiltonian structures of hydrodynamic type.} Nonlinearity,  {\bf 34}(8)(2021),  5136--5162.

\bibitem{Nijenhuis} A.\,V.\,Bolsinov, A.\,Yu.\,Konyaev, V.\,S.\,Matveev, {Nijenhuis geometry.} Advances in Mathematics,  {\bf 394}(22)(2022),  108001.



\bibitem{NijenhuisAppl3}   A.\,V.\,Bolsinov, A.\,Yu.\,Konyaev, V.\,S.\,Matveev,  { Applications of Nijenhuis geometry III: 
Frobenius pencils and compatible non-homogeneous Poisson structures.} J. Geom. Anal. {\bf 33}(6)(2023), 193.


\bibitem{NijenhuisAppl4}   A.\,V.\,Bolsinov, A.\,Yu.\,Konyaev, V.\,S.\,Matveev,  {Applications of Nijenhuis Geometry IV: multicomponent KdV and Camassa-Holm equations.} Dyn.  Part. Diff. Eq., 
 {\bf 20}(1)(2023),  73--98. 

 
\bibitem{NijenhuisAppl5}   A.\,V.\,Bolsinov, A.\,Yu.\,Konyaev, V.\,S.\,Matveev,  { Applications of Nijenhuis Geometry V: geodesically equivalent metrics and finite-dimensional reductions of certain integrable quasilinear systems.}  To appear in  Journal of Nonlinear Science,  	\url{arXiv:2306.13238}.  


\bibitem{carter} B.\,Carter, { Killing tensor quantum numbers and conserved currents in curved space.}  Phys. Rev. D  {\bf 16}(1977), 3395--3414.

\bibitem{chiscop} I.\,Chiscop,  H.\,R.\,Dullin, K.\,Efstathiou, H.\,Waalkens, {
A Lagrangian fibration of the isotropic 3-dimensional harmonic oscillator with monodromy.}  
J. Math. Phys. {\bf 60}(3)(2019),  032103, 15 pp. 


\bibitem{crampin} M.\,Crampin, W.\,Sarlet, G.\,Thompson, { Bi-differential calculi, bi-Hamiltonian systems and conformal Killing tensors.} J. Phys. A {\bf 33}(48)(2000),  8755--8770.




    \bibitem{DR}   L.\,Degiovanni   and G.\,Rastelli, {  Complex variables for separation of the Hamilton-Jacobi equation on real pseudo-Riemannian manifolds}. 
J. Math. Phys. {\bf 48}(2007), 073519. 

\bibitem{dini} U.\, Dini, Sopra un problema che si presenta nella teoria generale delle rappresentazioni geografice di una superficie su un’altra. 
Ann. di Math., ser. 2, {\bf 3}(1869), 269--293.


 \bibitem{dullin} H.\,R.\,Dullin,  H.\,Waalkens, {Defect in the Joint Spectrum of Hydrogen due to Monodromy.} 
Phys. Rev. Lett. {\bf 120}(2018), 020507, 5 pp. 


\bibitem{eisenhart} L.\,P.\,Eisenhart, 
Separable Systems of St\"ackel. Annals of Mathematics
 {\bf  35}(2)(1934),  284--305. 

\bibitem{falqui} G.\ Falqui, F.\, Magri, M., Pedroni, J.\,P.\, Zubelli, { A bi-Hamiltonian theory for stationary KdV flows and
their separability.} Regul. Chaotic Dyn. {\bf  5}(2000),  33--52. 

\bibitem{fedorova}  A.\,Fedorova,
V.\,S.\,Matveev, {
 Degree of mobility for metrics of lorentzian signature and parallel $(0,2)$-tensor fields on cone manifolds.} 
 Proceedings of the LMS {\bf 108}(2014) 1277--1312.

 
\bibitem{Ferapontov1991} E.\,V.\,Ferapontov, { Integration of weakly nonlinear hydrodynamic systems in Riemann
invariants. } Phys. Lett. A {\bf 158}(1991), no. 3--4, 112--118.  

\bibitem{hammerl} M.\,Hammerl, P.\,Somberg, V.\,Soucek, J.\,Silhan, 
{  Invariant prolongation of overdetermined PDEs in projective, conformal, and Grassmannian geometry. }  Ann. Global Anal. Geom. {\bf 42}(1)(2012),  121--145. 


\bibitem{IIM} A.\,Ibort, F.\,Magri, G.\,Marmo, {  Bihamiltonian structures and St\"ackel separability.}
 J. Geom.
Phys. {\bf  33}(2000), 210--228.


\bibitem{jacobi} C.\,G.\,J.\, Jacobi,  Vorlesungen \"uber Dynamik. Ed. by A. Clebsch, Georg Reimer, Berlin, 1866.

\bibitem{Kalnins1984}  E.\,G.\,Kalnins,  W.\,Miller,   G.\,J.\,Reid,  { Separation of variables for complex Riemannian spaces of constant curvature. I. Orthogonal separable coordinates for ${\rm S}_{n{\bf C}}$ and ${\rm E}_{n{\bf C}}$.}  Proc. Roy. Soc. London Ser. A {\bf 394}(1984), no. 1806, 183--206.  

 \bibitem{Kalnins-book} E.\,G.\,Kalnins, { Separation of variables for Riemannian spaces of constant curvature. }
 Pitman Monographs and Surveys in Pure and Applied Mathematics, {\bf 28}.
 Longman Scientific \& Technical, Harlow; John Wiley \&
Sons, Inc., New York, 1986. viii+172 pp.


 \bibitem{Kalnins1986}  E.\,G.\,Kalnins,  W.\,Miller, {  Separation of variables on n-dimensional Riemannian manifolds. I. The n-sphere $S^n$
 and Euclidean n-space $\mathbb{R}^n$.}
 J. Math. Phys. {\bf 27}(7)(1986),  1721--1736.



\bibitem{Kress-book} 
E.\,G.\,Kalnins, J.\,M.\,Kress and W.\,Miller, {Separation of Variables and Superintegrability. The symmetry of solvable systems.}
 IOP Expanding Physics. IOP Publishing, Bristol, 2018. xv+approximately 300 pp. \url{https://iopscience.iop.org/book/978-0-7503-1314-8}.

 
\bibitem{KKM} A.\,Yu.\,Konyaev, J.\,M.\,Kress, V.\,S.\,Matveev, { When a (1,1)-tensor generates separation of variables of a certain metric. }
J. Geom. Phys. {\bf 195}(2024), 105031. 

\bibitem{kruglikov} B.\,Kruglikov, V.\,S.\,Matveev, { The geodesic flow of a generic metric does not admit nontrivial integrals polynomial in momenta.}   Nonlinearity {\bf 29}(2016),  1755--1768.

\bibitem{LC1896}
 T.\,Levi-Civita, { Sulle trasformazioni delle equazioni
 dinamiche}. Ann. di Mat., serie $2^a$, {\bf 24}(1896), 255--300.
 

 \bibitem{LC1904} T.\,Levi-Civita, {Sulla integrazione della equazione di Hamilton-acobi per separazione di variabili.} Mathematische Annalen,  {\bf 59}(1904),  383--397.

 \bibitem{MT}  V.\,S.\,Matveev,  P.\, J.\,Topalov, {Trajectory equivalence and corresponding integrals.} Regular and Chaotic Dynamics, {\bf 3}(1998), 30--45.


 \bibitem{hyperbolic}  V.\,S.\,Matveev, {  Hyperbolic manifolds are geodesically rigid.}  Invent. math. {\bf 151}(2003), 579--609.


\bibitem{lichnerowitz}  V.\,S.\,Matveev, {  Proof of Projective Lichnerowicz-Obata Conjecture.}  J. of Differential Geometry, 
{\bf 75}(2007), 459--502. 

\bibitem{monoud}  V.\,S.\,Matveev, P.\,Mounoud, { Gallot-Tanno Theorem for closed incomplete pseudo-Riemannian manifolds and
applications.}  Global. Anal. Geom., {\bf 38}(3)(2010),  259--271. 

\bibitem{iran}  V.\,S.\,Matveev, { Projectively invariant objects and the index of the group of affine transformations in the group of projective transformations.}  Bull. Iran. Math. Soc. {\bf 44}(2018), 341--375.

\bibitem{MR} K.\,Marciniak, S.\,Rauch-Wojciechowski, { Two families of nonstandard
Poisson structures for Newton equations.} J. Math. Phys. {\bf 39}(10)(1998),
5292--5306.

\bibitem{MB} K.\,Marciniak, M.\,Blaszak, { Flat coordinates of flat St\"ackel systems.} Appl. Math. Comput. {\bf 268}(2015), 706--716.


\bibitem{milson} R.\,G.\,Mclenaghan, R.\,Milson, R.\,G.\,Smirnov,  { Killing tensors as irreducible representations of the general linear group. }Comptes Rendus Mathematique {\bf 339}(9)(2004),  621--624.

  
\bibitem{Moon} P.\,Moon, D.\,E.\,Spencer, { Field theory handbook. Including coordinate systems, differential equations and their solutions. Second edition.}
 Springer-Verlag, Berlin, 1988. viii+236 pp.



\bibitem{neumann} C.\,Neumann, { De problemate quodam mechanico, quod ad primam integralium ultraellipticorum classem revocatur. }
J. reine und angew. math. {\bf 56}(1859), 46--63.

\bibitem{ol0} M.\,N.\,Olevskii, 
Sur une g\'en\'eralisation d'un probl\`eme de Lam\'e-Darboux.(in French)
C. R. (Doklady) Acad. Sci. URSS (N.S.) {\bf 55}(1947), 685--\texttt{}688.

\bibitem{ol} M.\,N.\,Olevskii, 
{ Triorthogonal systems in spaces of constant curvature in which the equation $\Delta_2 u + \lambda u=0$  
allows a complete separation of variables. } (in Russian) 
Mat. Sb.,  {\bf  27(69)}(1950), 379--426.  
 

\bibitem{Painleve1897} P. Painlev\'e, { Sur les int\'egrale quadratiques des \'equations de la Dynamique.} Compt. Rend., {\bf 124}(1897), 221--224.

\bibitem{Prvanovic} M.\,Prvanovic, { On warped product manifolds.} Filomat {\bf  9}(2)(1995), 169--185. Available from  \url{http://elib.mi.sanu.ac.rs/}

\bibitem{RM} K.\,Rajaratnam, R.\,G.\,McLenaghan, { Killing tensors, warped products and the orthogonal separation of the Hamilton-Jacobi equation. } J. Math. Phys. {\bf 55}(1)(2014),  013505, 27 pp.

\bibitem{RM2} K.\,Rajaratnam, R.\,G.\,McLenaghan,   {
Classification of Hamilton-Jacobi separation in orthogonal coordinates with diagonal curvature. }
J. Math. Phys. {\bf 55}(8)(2014),  083521, 16 pp.  

\bibitem{RM3} K.\,Rajaratnam, R.\,G.\,McLenaghan, C.\, Valero,
{  Orthogonal separation of the Hamilton-Jacobi equation on spaces of constant curvature.} SIGMA Symmetry Integrability Geom. Methods Appl. {\bf 12}(2016),  117, 30 pp.

\bibitem{schoebel-1}  K.\,Sch\"obel, {
The variety of integrable Killing tensors on the 3-sphere. }  
SIGMA Symmetry Integrability Geom. Methods Appl. {\bf 10}(2014),  080, 48 pp. 

\bibitem{schoebel0}  K.\,Sch\"obel, { An algebraic geometric approach to separation of variables. }  Dissertation, Friedrich-Schiller-Universit\"at, Jena, 2014. Springer Spektrum, Wiesbaden, 2015. xii+138.

\bibitem{schoebel1}  K.\,Sch\"obel, A.\,P.\, Veselov, {
Separation coordinates, moduli spaces and Stasheff polytopes.} Comm. Math. Phys. {\bf  337}(3)(2015),  1255--1274.

\bibitem{schoebel2}  K.\,Sch\"obel, { Are orthogonal separable coordinates really classified? } 
SIGMA Symmetry Integrability Geom. Methods Appl. {\bf 12}(2016),  041, 16 pp.

\bibitem{sinjukov} N.\,S.\,Sinjukov, { Geodesic mappings of Riemannian spaces.}  ``Nauka'', Moscow, 1979, 256 pp.



 \bibitem{TM}   P.\,J.\,Topalov, V.\,S.\,Matveev, {  Geodesic equivalence via integrability.}  Geometriae Dedicata {\bf 96}(2003), 91--115. 


\bibitem{CR} C.\,Valero, R.\,McLenaghan, {  Classification of the orthogonal separable webs for the Hamilton-Jacobi and Laplace-Beltrami equations on 3-dimensional hyperbolic and de Sitter spaces. }
J. Math. Phys. {\bf 60}(3)(2019),  033501, 30 pp.


\bibitem{CR2} C.\,Valero,  R.\,McLenaghan, {  Classification  of the orthogonal separable webs for the Hamilton-Jacobi and Klein-Gordon equations on 3-dimensional Minkowski space.}  SIGMA Symmetry Integrability Geom. Methods Appl. {\bf 18}(2022), 019, 28 pp.


\bibitem{veselov} A.\,P.\,Veselov,  { Finite-zone potentials and integrable systems on a sphere with quadratic potential. }  Funktsional. Anal. i Prilozhen. {\bf 14}(1)(1980), 48--50. 



\bibitem{waalkens1} H.\,Waalkens, H.\,R.\,Dullin,  J.\,Wiersig,  { Elliptic quantum billiard. } Ann. Physics {\bf 260}(1)(1997),  50--90.
 
\bibitem{waalkens} H.\,Waalkens, H.\,R.\,Dullin, P.\,H.\,Richter, { The problem of two fixed centers: bifurcations, actions, monodromy. } Phys. D {\bf 196}(3-4)(2004),  265--310.

\bibitem{zakharov}  V.\,E.\,Zakharov, {
Description of the $n$-orthogonal curvilinear coordinate systems and Hamiltonian integrable systems of hydrodynamic type. I. Integration of the Lam\'e equations.}
Duke Math. J. {\bf 94}(1)(1998), 103--139.

\end{thebibliography}
\end{document}